\newtheorem{theorem}{Theorem}[section]
\newtheorem{corollary}[theorem]{Corollary}
\newtheorem{lemma}[theorem]{Lemma}
\newtheorem{proposition}[theorem]{Proposition}
\theoremstyle{definition}
\newtheorem{definition}[theorem]{Definition}
\newtheorem{remark}[theorem]{Remark}
\numberwithin{equation}{section}
\newcommand{\R}{\mathbb{R}}
\newcommand{\N}{\mathbb{N}}
\newcommand{\eps}{\varepsilon}
\newcommand{\ove}{\overline}
\def\diam{{\rm diam}}
\newcommand{\B}{A''}
\newcommand{\Rd}{{\R}^d}
\newcommand{\Sd}{{\mathbb{S}}^{d-1}}
\newcommand{\dx}{\, \mathrm{d} x}
\renewcommand{\dh}{\, \mathrm{d} \mathcal{H}^{d-1}}
\newcommand{\hd}{\mathcal{H}^{d-1}}
\newcommand{\Ld}{{\mathcal{L}}^d}
\newcommand{\sm}{\setminus}
\newcommand{\Mdd}{{\mathbb{M}^{d\times d}_{\rm sym}}}
\newcommand{\Mddskew}{{\mathbb{M}^{d\times d}_{\rm skew}}}
\newcommand{\BBB}{\color{black}}
\newcommand{\EEE}{\color{black}}
\numberwithin{equation}{section}
\begin{document}

\title[Integral representation in linear elasticity  with surface discontinuities]{Integral representation for  energies  in linear elasticity  with surface discontinuities}

\author{Vito Crismale}
\address[Vito Crismale]{CMAP, \'Ecole Polytechnique, 91128 Palaiseau Cedex, France}
\email[Vito Crismale]{vito.crismale@polytechnique.edu}

\author{Manuel Friedrich}
\address[Manuel Friedrich]{Applied Mathematics M\"unster, University of M\"unster\\
Einsteinstrasse 62, 48149 M\"unster, Germany.}
\email{manuel.friedrich@uni-muenster.de}

\author{Francesco Solombrino}
\address[Francesco Solombrino]{Dip. Mat. Appl. ``Renato Caccioppoli'', Univ. Napoli ``Federico II'', Via Cintia, Monte S. Angelo
80126 Napoli, Italy}
\email{francesco.solombrino@unina.it}

\subjclass[2010]{ 26A45 49J45, 49Q20, 70G75,   74R10.}

\keywords{Integral representation, global method for relaxation, free discontinuity problems,  generalized special functions of bounded deformation, Korn-type inequalities}

\begin{abstract}
In this paper we prove an integral representation formula for a general class of energies defined on the space of generalized special functions of bounded deformation ($GSBD^p$) in arbitrary space dimensions. Functionals of this type naturally arise  in the modeling of linear elastic solids with surface discontinuities including phenomena as fracture, damage,  surface tension between different elastic phases, or material voids. Our approach is based on the global method for relaxation devised in \cite{BFM} and a recent Korn-type inequality in $GSBD^p$ \cite{CCS}.  Our general strategy also allows to generalize integral representation results in $SBD^p$, obtained in dimension two  \cite{Conti-Focardi-Iurlano:15}, to higher dimensions, and to revisit results in the framework of generalized special functions of bounded variation ($GSBV^p$). 
\end{abstract}
\maketitle

\section{Introduction}

 Integral representation results  are a fundamental tool in the abstract theory of variational limits  by $\Gamma$-convergence  or in relaxation problems  (see \cite{DMMod80}). The topic has  attracted  widespread attention in the mathematical community over the last decades,  with  applications  in various contexts, such as homogenization, dimension reduction, or  atomistic-to-continuum approximations.   In this paper we contribute to this topic by proving  an integral representation result for a general class of energies arising in the modeling of linear elastic solids with surface discontinuities.

Integral  representation theorems have been provided with increasing generality, ranging from functionals defined on Sobolev spaces    \cite{Alb94,   ButDM80, ButDM85JMPA, ButDM85, DeG75, Sbo75}     to those defined on spaces of functions of bounded variation  \cite{BouDM93, CarSbo79,  DM80,    BFM},  in  particular  on the subspace $SBV$ of special functions of bounded variation \cite{BFLM, BraChP96, Braides-Defranceschi-Vitali}  and on piecewise constant functions \cite{AmbrosioBraides}.    In recent years, this analysis has been further improved to deal with
functionals  and variational limits  on   $GSBV^p$  (generalized   special  functions of bounded variation  with $p$-integrable bulk density),    which is   the natural energy space for the variational description of many problems with free discontinuities,  see  among others   \cite{BacBraZep18, BacCicRuf19, barfoc, BarLazZep16, Caterina, focgelpon07, Fri19CVPDE}. A very general method for dealing with all the abovementioned  classes of functionals, the so-called \emph{global method for relaxation}, has been    developed by {\sc Bouchitt\'{e}, Fonseca, Leoni, and Mascarenhas}       in \cite{BFLM, BFM}.  It  essentially consists in comparing asymptotic Dirichlet problems
on small balls with different boundary data depending on the local properties of the functions and allows to characterize energy densities in terms of cell formulas.

When coming to the variational description of rupture phenomena in  general linearly  elastic materials, however, the functional setting to be considered becomes weaker. Indeed,  problems need to be formulated in suitable subspaces of \emph{functions of bounded deformation} ($BD$ functions)   for  which the distributional symmetrized gradient  is a bounded Radon measure.

  In the mathematical description of  linear elasticity, the elastic properties are    determined  by the \emph{elastic strain}. For a solid in a (bounded) reference configuration $\Omega \subset \Rd$, whose \emph{displacement field} with respect to the equilibrium is $u\colon \Omega \to \Rd$, the elastic strain is given by the symmetrized gradient $e(u)=\tfrac12 (\nabla u + (\nabla u)^{\mathrm{T}})$.  In standard models, the corresponding   linear elastic energy is  a suitable quadratic form of $e(u)$, possibly depending on the material point,  see   e.g.\ \cite[Section~2.1]{FraMar98}. However,  this is often generalized to the case of  $p$-growth  for a power $p>1$  \cite[Sections~10, 11]{Hut}.  The presence of surface discontinuities is related to several dissipative phenomena, such as cracks, surface tension between different elastic phases, or internal cavities.  In  the energetic description,  this is represented  by   a term concentrated on the \emph{jump set} $J_u$. This set is characterized by the property that for $x \in J_u$, when blowing up around $x$, the jump set approximates a hyperplane with  normal $\nu_u(x)\in \Sd$ and
the displacement field is close to two suitable values $u^+(x)$, $u^-(x) \in \Rd$ on the two sides of the material with respect to  this   hyperplane.

 Prototypical examples of functionals described above are energies which are   controlled from above and below by suitable multiples of
\begin{equation}\label{control1}  \tag{1}
\int_\Omega |e(u)|^p \dx + \int_{J_u \cap \Omega} (1+|[u]|) \dh,
\end{equation}
 where   $[u](x)=u^+(x)-u^-(x)$ denotes the jump opening,  or  which are controlled by multiples of \emph{Griffith's energy} \cite{Griffith}  
\begin{equation}\label{control2}  \tag{2}
\int_\Omega |e(u)|^p \dx + \hd(J_u \cap \Omega).
\end{equation}
 Whereas in case \eqref{control1}  the energy space is  be given by $SBD^p$, a subspace of $BD$, problems with control of type \eqref{control2} are naturally formulated on   \emph{generalized special functions of bounded deformation} $GSBD^p$, introduced by {\sc Dal Maso} \cite{DM}. (We refer to Section~\ref{sec: BD} for more details.)  The only available  integral representation result in this context is due to {\sc Conti, Focardi, and Iurlano}  \cite{Conti-Focardi-Iurlano:15}  who considered  variational functionals  controlled locally in terms of \eqref{control1}  in dimension  $d=2$.   Let us mention that the behavior is quite different if   linear growth on the symmetrized gradient is assumed  (corresponding to $p=1$),  as suited for the description of  plasticity.  In that case,  representation results in the framework  of $BD$   have been obtained, for instance, in \cite{BarFonToa00, EboToa03} and \cite{CarFocVG19} (see also \cite{DePRin16, Rin11ARMA}, containing essential tools for the proof).

 The goal of the present article is twofold: we generalize the results of  \cite{Conti-Focardi-Iurlano:15} for energies with control of type  \eqref{control1}  to arbitrary space dimensions and, more importantly, we extend the theory to encompass also problems of the form \eqref{control2},  which are  most relevant from an applicative viewpoint.  Indeed, already in dimension two, the extension of \cite{Conti-Focardi-Iurlano:15}    to the case where only a control of  type \eqref{control2} is available is no straightforward task.    This is   a  fundamental  difference with respect to the $BV$-theory where problems for  generalized functions of bounded variation can be reconducted to $SBV$ by a \emph{perturbation trick} (see for instance \cite{Caterina}):  one considers a small perturbation of the functional, depending on the jump opening, to represent functionals on  $SBV^p$.  Then, by letting the perturbation parameter vanish and by truncating functions suitably,  the representation can be extended to  $GSBV^p$.  Unfortunately,  the  trick of  reducing problem \eqref{control2}  to \eqref{control1}  is not expedient  in the linearly elastic context and   does not allow to deduce an integral representation result in $GSBD^p$ from the one in $SBD^p$. This is mainly due to the fact that,  given a control only on the symmetrized gradient,  it is in principle not possible to use smooth truncations to decrease the energy up to a small error.

Let us also remark that,  while in the majority of integral representation results in $BV$ and $BD$    the $L^1$-topology was considered,   this is not the right choice when only a lower bound of the form \eqref{control2} is at hand. Indeed, in this case, the available compactness results \cite{Crismale1, DM} have been established with respect to the topology of the convergence in measure. This latter is also the topology where  recently  an integral representation result for the subspace $PR(\Omega)$ of piecewise rigid functions has been proved in \cite{FM}.

In our main result (Theorem~\ref{theorem: PR-representation}), we prove an integral representation for 
\emph{variational
functionals} $\mathcal{F}\colon GSBD^p(\Omega) \times \mathcal{B}(\Omega) \to [0,+\infty)$ 
($\mathcal{B}(\Omega)$ 
denoting the Borel subsets of $\Omega$) that  satisfy the standard abstract conditions to be Borel measures in the second argument, lower semicontinuous with respect to convergence in measure, and local in the first argument.  Moreover, we require control of type  \eqref{control2}, localized   to any $B \in \mathcal{B}(\Omega)$.

 Let us comment on the proof strategy.  We follow the general approach of the \emph{global method for relaxation} provided  in \cite{BFLM, BFM}  for variational functionals in $BV$.    The proof strategy recovers the integral bulk and surface densities as blow-up limits of cell minimization formulas. The steps to be performed are the following: 
\begin{itemize}
\item one first shows that, for fixed $u \in GSBD^p(\Omega)$, the set function $\mathcal{F}(u, \cdot)$ is asymptotically equivalent to its minimum $\mathbf{m}_{\mathcal{F}}(u, \cdot)$ over competitors attaining the same boundary conditions  as  $u$ \BBB  on the boundaries of small balls centered in $x_0 \in \Omega$ with vanishing radii. \EEE With this we mean  that the two quantities have the same  Radon-Nikodym  derivative with respect to $\mu:=\mathcal{L}^d\lfloor_{\Omega} + \mathcal{H}^{d-1}\lfloor_{J_u \cap \Omega}$ (Lemma \ref{lemma: G=m});
\item one then proves that  the Radon-Nikodym  derivative $\frac{\mathrm{d}\mathbf{m}_{\mathcal{F}}(u, \cdot)}{\mathrm{d}\mu}$ only depends on $x_0$,  the value $u(x_0)$,  and the (approximate) gradient $\nabla u(x_0)$ at a Lebesgue point $x_0$, while at a jump point $x_0$ it is uniquely determined by the one-sided traces $u^+(x_0)$, $u^-(x_0)$ and the normal vector $\nu_u(x_0)$ to $J_u$ in $x_0$ (Lemmas \ref{lemma: minsame} and \ref{lemma: minsame2}).
\end{itemize}
When dealing with all  of  the abovementioned issues, a key ingredient  is  given  by
a \emph{Korn-type inequality for special functions of bounded deformation}, established  recently  by {\sc Cagnetti, Chambolle, and Scardia}  \cite{CCS},   which  generalizes   a two-dimensional result in \cite{Conti-Focardi-Iurlano:15} (see also \cite{Friedrich:15-3}) to arbitrary dimension.  It provides a control of the full gradient in terms of the symmetrized gradient, up to an exceptional set whose perimeter has a surface measure comparable to that of the discontinuity  set.  In particular,  this estimate is used to   approximate  the function $u$ with functions $u_\eps$, which have Sobolev regularity in a ball (around a Lebesgue point), or in half-balls oriented by the jump normal (around a jump point), and which converge to the purely elastic competitor $u(x_0)+\nabla u(x_0)(\cdot-x_0)$, or the two-valued function with values $u^-(x_0)$ and $u^+(x_0)$, respectively. This is done in Lemmas \ref{lemma: blow up} and \ref{le:blowupJumpPoints}, respectively, and is used for proving Lemmas \ref{lemma: minsame} and \ref{lemma: minsame2}.   Let us mention that this application of the Korn-type inequality is similar to the one in dimension two \cite{Conti-Focardi-Iurlano:15} (with the topology of convergence in measure in place of $L^1$), and constitutes the counterpart of the $SBV$-Poincar\'e inequality \cite{DeGCarLea} used in the $SBV$-case \cite{BFLM}. \BBB We also point out that our construction for approximating two-valued functions in Lemma \ref{lemma: minsame2} slightly differs from the ones in \cite{BFLM, Conti-Focardi-Iurlano:15} in order  to fix a possible flaw contained in these proofs, see Remark \ref{rem: cons} for details. \EEE

%

 In contrast to \cite{Conti-Focardi-Iurlano:15}, the Korn inequality is also used in the proof of  Lemma \ref{lemma: G=m}:  at this point, one needs to show that functions of the form  $v^\delta:=\sum v_i^\delta \chi_{B_i^\delta}$ approximate $u$ in the topology of the convergence in measure, where  $B_i^\delta$ is a fine cover of a given set with disjoint balls of radius smaller than $\delta$ and $v_i^\delta$  denote  minimizers  for $\mathbf{m}_{\mathcal{F}}(u, B_i^\delta)$.  In \cite{Conti-Focardi-Iurlano:15}, the lower bound in \eqref{control1} allows to control the distributional symmetrized gradient ${\rm E}u$ which along with a scaling argument and the classical Korn-Poincar\'e inequality in $BD$ (see \cite[Theorem 2.2]{Tem}) shows that $v_i^\delta$ is close to $u$ on each $B_i^\delta$. (In \cite{BFLM}, the  $SBV$-Poincar\'e  inequality is used.) Our  weaker lower bound of the form \eqref{control2}, however, calls for novel arguments and we use the Korn-type inequality to show that $v_i^\delta$    are close to $u$ in $L^p$ up to  exceptional  sets $\omega_i^\delta$ whose  volumes  scale like $\delta(\mathcal{F}(u, B_i^\delta)+\mu(B_i^\delta))$.

We also point out that,  if instead  a control of the type \eqref{control1} is assumed,   the  arguments leading to Theorem \ref{theorem: PR-representation} can be successfully adapted to extend the result  for functionals on $SBD^p$  (see \cite{Conti-Focardi-Iurlano:15}) to  arbitrary space dimensions,   see Theorem \ref{theorem: SBD-representation}.   This is done  by  exploiting the stronger blow-up properties of $SBD$ functions.  We   note that,  in principle, this result could be also obtained  by  adapting the arguments in \cite{Conti-Focardi-Iurlano:15} to higher dimension   by employing   the Korn inequality  \cite{CCS}. We however preferred to give a self-contained proof of Theorem \ref{theorem: SBD-representation}, which requires only slight modifications of the arguments  used for Theorem \ref{theorem: PR-representation} and nicely illustrates the differences between $SBD^p$ and its generalized space.

For a related purpose, \BBB in Section \ref{sec: appendix} \EEE we discuss how our arguments can also provide a direct proof for integral representation results  on $GSBV^p$,  if a local control on the full deformation gradient of the form
\begin{equation*}
\int_\Omega |\nabla u|^p \dx + \hd(J_u \cap \Omega)
\end{equation*}
 is given, see Theorem \ref{theorem: PR-representation-gsbv}.   In particular, no  perturbation or  truncation arguments  are  needed  in the proof.   Therefore, we believe that this provides a new perspective and a slightly simpler approach to integral representation results in  $GSBV^p$ without necessity of  the \emph{perturbation trick} discussed before, relying on the $SBV$ result.  Let us, however, mention that in \cite{Caterina} a more general growth condition from above is considered: dealing with such a condition would instead require a truncation method in the proof.


%

\BBB We close the introduction by mentioning that in a subsequent work \cite{FPS} we use the present result to obtain integral representation of $\Gamma$-limits for sequences of   energies  in linear elasticity  with surface discontinuities. There, we additionally 
characterize the bulk and surface densities as blow-up limits of cell minimization formulas where the minimization is not performed on  $GSBD^p$ but more specifically  on Sobolev functions (bulk density) and piecewise rigid functions \cite{FM} (surface density). The latter characterization particularly allows to identify integrands of relaxed functionals and to treat homogenization problems. \EEE

 The paper is organized as follows. In Section \ref{sec: main} we present our main integral representation result in $GSBD^p$. Section \ref{sec: prel} is devoted to some preliminaries about the function space. In particular, we present the Korn-type inequality established in \cite{CCS} and prove a fundamental estimate. Section~\ref{sec: global method} contains the general strategy  and the proof of Lemma \ref{lemma: G=m}. The identifications of the bulk and surface density (Lemmas \ref{lemma: minsame} and \ref{lemma: minsame2}) are postponed to Sections \ref{sec:bulk} and  \ref{sec:surf}, respectively. In Section \ref{app: sbd}  we describe the modifications necessary to obtain the $SBD^p$-case. Finally, in  \BBB Section \ref{sec: appendix} \EEE  we explain how our method can be used to establish an integral representation result in $GSBV^p$.

\section{The integral representation result}\label{sec: main}

 In this section we present our main result. We start with some basic notation.   Let $\Omega \subset \R^d$  be  open, bounded with Lipschitz boundary. Let $\mathcal{A}(\Omega)$ be the  family of open subsets of $\Omega$,  and denote by   $\mathcal{B}(\Omega)$  the family of Borel sets contained in $\Omega$.  For every $x\in \Rd$ and $\eps>0$ we indicate by $B_\eps(x) \subset \Rd$ the open ball with center $x$ and radius $\eps$. For $x$, $y\in \Rd$, we use the notation $x\cdot y$ for the scalar product and $|x|$ for the  Euclidean  norm.   Moreover, we let   $\Sd:=\{x \in \Rd \colon |x|=1\}$  and we denote  by $\mathbb{M}^{d \times d}$   the set of $d\times d$ matrices. The $m$-dimensional Lebesgue measure of the unit ball in $\R^m$ is indicated by $\gamma_m$ for every $m \in \N$.   We denote by $\Ld$ and $\mathcal{H}^k$ the $d$-dimensional Lebesgue measure and the $k$-dimensional Hausdorff measure, respectively.  

For definition and properties of the space $GSBD^p(\Omega)$,  $1 < p < \infty$, we refer the reader to \cite{DM}. Some relevant properties are collected in Section \ref{sec: prel} below. In particular,  the approximate gradient is denoted by $\nabla u$ (it is well-defined, see Lemma \ref{lemma: approx-grad}) and the (approximate) jump set is denoted by $J_u$  with corresponding normal $\nu_u$ and one-sided limits $u^+$ and $u^-$.  We also define $e(u) = \frac{1}{2} (  \nabla u + (\nabla u)^{\mathrm{T}})$.

We consider functionals $\mathcal{F}\colon GSBD^p(\Omega) \times \mathcal{B}(\Omega) \to  [0,+\infty)$ with the following general assumptions: \begin{itemize}
\item[(H$_1$)]  $\mathcal{F}(u,\cdot)$ is a Borel measure for any $u \in  GSBD^p(\Omega)$, 
\item[(H$_2$)]  $\mathcal{F}(\cdot,A)$ is lower semicontinuous with respect to convergence in measure on $\Omega$ for any $A \in \mathcal{A}(\Omega)$,
\item[(H$_3$)]   $\mathcal{F}(\cdot, A)$ is local for any $A \in \mathcal{A}(\Omega)$, in the sense that if $u,v \in GSBD^p(\Omega)$ satisfy $u=v$ a.e.\ in $A$, then $\mathcal{F}(u,A) = \mathcal{F}(v,A)$,
\item[(H$_4$)]  there exist $0 < \alpha  < \beta $ such that for any $u \in GSBD^p(\Omega)$ and $B \in \mathcal{B}(\Omega)$  we have
$$\alpha \bigg(\int_{ B} |e(u)|^p  \dx    +   \mathcal{H}^{d-1}(J_u \cap B)\bigg) \le \mathcal{F}(u,B) \le \beta \bigg(\int_{ B} (1 + |e(u)|^p)    \dx  +   \mathcal{H}^{d-1}(J_u \cap B)\bigg).$$ 
\end{itemize}
We now formulate the main result of this article addressing integral representation of functionals $\mathcal{F}$ satisfying {\rm (${\rm H_1}$)}--{\rm (${\rm H_4}$)}. To this end, we introduce some further notation: for every $u \in GSBD^p(\Omega)$ and $A \in \mathcal{A}(\Omega)$ we define 
\begin{align}\label{eq: general minimization} 
\mathbf{m}_{\mathcal{F}}(u,A) = \inf_{v \in GSBD^p(\Omega)} \  \lbrace \mathcal{F}(v,A)\colon \ v = u \ \text{ in a neighborhood of } \partial A \rbrace.
\end{align}
For $x_0 \in \Omega$, $u_0 \in \R^d$, and $\xi \in  \mathbb{M}^{d \times d}  $ we introduce the functions  $\ell_{x_0,u_0,\xi}\colon \R^d \to \R^d$ by 
\begin{align}\label{eq: elastic competitor}
\ell_{x_0,u_0,\xi}(x) =  u_0 + \xi (x-x_0). 
\end{align}
Moreover, for $x_0 \in \Omega$, $a,b \in \R^d$, and  $\nu \in \mathbb{S}^{d-1}$ we introduce  $u_{x_0,a,b,\nu} \colon \R^d \to \R^d$ by 
\begin{align}\label{eq: jump competitor}
u_{x_0,a,b,\nu}(x) = \begin{cases}  a & \text{if } (x-x_0) \cdot \nu > 0,\\ b & \text{if }  (x-x_0) \cdot \nu < 0. \end{cases} 
\end{align}

 In this paper, we will prove the  following result.

\begin{theorem}[Integral representation in $GSBD^p$]\label{theorem: PR-representation}
Let $\Omega \subset \R^d$ be open, bounded with Lipschitz boundary and suppose that  $\mathcal{F}\colon GSBD^p(\Omega)  \times \mathcal{B}(\Omega) \to [0,+\infty)$ satisfies {\rm (${\rm H_1}$)}--{\rm (${\rm H_4}$)}. Then 
$$\mathcal{F}(u,B) = \int_B f\big(x,u(x),\nabla u(x)\big)  \, {\rm d}x +    \int_{J_u\cap  B} g\big(x,u^+(x),u^-(x),\nu_u(x)\big)\,  {\rm d}  \mathcal{H}^{d-1}(x)$$
for all $u \in  GSBD^p(\Omega)$   and    $B \in \mathcal{B}(\Omega)$, where $f$ is given  by
\begin{align}\label{eq:fdef}
f(x_0,u_0,\xi) = \limsup_{\eps \to 0} \frac{\mathbf{m}_{\mathcal{F}}(\ell_{x_0,u_0,\xi},B_\eps(x_0))}{\gamma_d\eps^{d}}
\end{align}
for all $x_0 \in \Omega$, $u_0 \in \R^d$, $\xi \in \mathbb{M}^{d \times d}$, and $g$ is given by 
\begin{align}\label{eq:gdef}
g(x_0,a,b,\nu) = \limsup_{\eps \to 0} \frac{\mathbf{m}_{\mathcal{F}}(u_{x_0,a,b,\nu},B_\eps(x_0))}{\gamma_{d-1}\eps^{d-1}}
\end{align}
for all $  x_0  \in \Omega$,  $a,b \in \R^d$, and $\nu \in \mathbb{S}^{d-1}$. 
\end{theorem}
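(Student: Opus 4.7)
The strategy is the \emph{global method for relaxation} of \cite{BFLM, BFM}, adapted to the weaker (measure-convergence) topology of $GSBD^p$. I fix $u\in GSBD^p(\Omega)$ and define the Radon measure $\mu=\mathcal{L}^d\lfloor_\Omega+\mathcal{H}^{d-1}\lfloor_{J_u\cap\Omega}$. Hypothesis (H$_4$) gives $\mathcal{F}(u,\cdot)\ll\mu$, so by the Besicovitch differentiation theorem it suffices to show that $\tfrac{\mathrm{d}\mathcal{F}(u,\cdot)}{\mathrm{d}\mu}$ equals $f(x_0,u(x_0),\nabla u(x_0))$ at $\mathcal{L}^d$-a.e.\ $x_0$ and $g(x_0,u^+(x_0),u^-(x_0),\nu_u(x_0))$ at $\mathcal{H}^{d-1}$-a.e.\ $x_0\in J_u$, with $f,g$ as in \eqref{eq:fdef}--\eqref{eq:gdef}.

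The proof splits into two steps. In the first step (which I would collect as Lemma \ref{lemma: G=m}), I show that $\tfrac{\mathrm{d}\mathcal{F}(u,\cdot)}{\mathrm{d}\mu}=\tfrac{\mathrm{d}\mathbf{m}_{\mathcal{F}}(u,\cdot)}{\mathrm{d}\mu}$ $\mu$-a.e. The bound $\mathbf{m}_{\mathcal{F}}\le\mathcal{F}$ is tautological. For the converse, given $A\in\mathcal{A}(\Omega)$ I select a fine Vitali cover $\{B_i^\delta\}$ of $A$ by pairwise disjoint balls of radius $\le\delta$, take almost-minimizers $v_i^\delta$ for $\mathbf{m}_{\mathcal{F}}(u,B_i^\delta)$, and paste them into $v^\delta=u$ off $\bigcup_i B_i^\delta$ and $v^\delta=v_i^\delta$ on each $B_i^\delta$. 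By locality and the measure property, $\mathcal{F}(v^\delta,A)\le\mathbf{m}_{\mathcal{F}}(u,A)+o(1)$, so lower semicontinuity (H$_2$) yields the desired inequality provided $v^\delta\to u$ in measure. This is exactly the step where the $BD$-argument of \cite{Conti-Focardi-Iurlano:15} fails under \eqref{control2}; my plan is to apply the Korn inequality of \cite{CCS} to $v_i^\delta-u$ on each $B_i^\delta$, obtaining a Sobolev surrogate outside an exceptional set $\omega_i^\delta$ of volume $\lesssim \delta(\mathcal{F}(u,B_i^\delta)+\mu(B_i^\delta))$. Summing in $i$ and using $\delta\to 0$ then forces $v^\delta\to u$ in measure.

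In the second step I compute $\tfrac{\mathrm{d}\mathbf{m}_{\mathcal{F}}(u,\cdot)}{\mathrm{d}\mu}$ pointwise (Lemmas \ref{lemma: minsame} and \ref{lemma: minsame2}). At a Lebesgue point $x_0$ of $u$ and $\nabla u$, I rescale on $B_\eps(x_0)$ and apply the Korn inequality of \cite{CCS} to the difference $u-\ell_{x_0,u(x_0),\nabla u(x_0)}$; this produces an approximation with Sobolev regularity on most of the ball that, after blow-up, converges to zero in $L^p$. A cut-off annulus near $\partial B_\eps(x_0)$ then lets me glue a minimizer for $\mathbf{m}_{\mathcal{F}}(u,B_\eps(x_0))$ with $\ell_{x_0,u(x_0),\nabla u(x_0)}$, so up to an error negligible at order $\eps^d$ the two minimization problems coincide, yielding the Lebesgue-point identification. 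At $\mathcal{H}^{d-1}$-a.e.\ $x_0\in J_u$, the same recipe is run on the two half-balls cut by the hyperplane orthogonal to $\nu_u(x_0)$, with target profile $u_{x_0,u^+(x_0),u^-(x_0),\nu_u(x_0)}$; care must be taken, as flagged in Remark \ref{rem: cons}, not to introduce spurious jumps along the interface when gluing the two halves.

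Once both steps are available, the theorem follows by the Besicovitch disintegration of $\mathcal{F}(u,\cdot)$ into its $\mathcal{L}^d$- and $\mathcal{H}^{d-1}\lfloor_{J_u}$-parts. The main obstacle I foresee is the first step: under the Griffith-type bound \eqref{control2} neither smooth truncation nor the $BD$ Korn--Poincar\'e inequality is available, and the whole argument relies on sufficiently sharp volume estimates for the exceptional sets produced by the Korn inequality of \cite{CCS} so that these estimates survive the Vitali-covering limit. The blow-up analysis in the second step is then comparatively standard, modulo the subtle two-valued construction at jump points.
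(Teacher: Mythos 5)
Your proposal is correct and follows essentially the same route as the paper: the same three-lemma decomposition (equivalence of $\mathcal{F}$ and $\mathbf{m}_{\mathcal{F}}$ at the level of Radon--Nikodym derivatives with respect to $\mu$; asymptotic replacement of $u$ by $\ell_{x_0,u(x_0),\nabla u(x_0)}$ at Lebesgue points; asymptotic replacement by $u_{x_0,u^+(x_0),u^-(x_0),\nu_u(x_0)}$ at jump points), the same use of Besicovitch differentiation, the same reliance on the Korn inequality of \cite{CCS} and on exceptional-set volume estimates to push through convergence in measure of the pasted minimizers, the same gluing via the fundamental estimate, and the same caution about how to build the two-valued competitor at jump points. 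The only minor imprecision is in Step~1: the inequality one actually obtains for $v^\delta$ is $\mathcal{F}(v^\delta,A)\le\mathbf{m}^\delta_{\mathcal{F}}(u,A)+o(1)$ (the Vitali-type infimum), which together with (H$_2$) yields $\mathcal{F}=\mathbf{m}^*_{\mathcal{F}}$ and then the Radon--Nikodym equality via one more covering argument, rather than directly $\mathcal{F}(v^\delta,A)\le\mathbf{m}_{\mathcal{F}}(u,A)+o(1)$; this is a matter of bookkeeping, not substance.
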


\begin{remark}
{\normalfont
 We proceed with some remarks on the result. 

\noindent (i) In general, if $f$ is not convex in $\xi$, in spite of the growth conditions (H$_4$), the functional may fully depend on $\nabla u$ and not just on the symmetric part $e(u)$. We refer to \cite[Remark~4.14]{Conti-Focardi-Iurlano:15} for an example in this direction. 
  
\noindent (ii) As  $\mathcal{F}$ is lower semicontinuous on $W^{1,p}$ \BBB with respect to weak convergence, \EEE the integrand $f$ is quasiconvex \cite{Morrey}. Since   $\mathcal{F}$ is lower semicontinuous on piecewise rigid functions, the integrand $g$ is $BD$-elliptic \cite{FMM}  (at least if one can ensure, for instance, that $g$ has a continuous dependence in $x$). A  fortiori, $g$ is $BV$-elliptic \cite{AmbrosioBraides2}. 
  
\noindent (iii)  If the functional $\mathcal{F}$ additionally satisfies $\mathcal{F}(u+a,A) = \mathcal{F}(u,A)$ for all affine functions  $a\colon \R^d \to \R^d$ with $e(a) = 0$, then there are two functions $f\colon \Omega \times  \mathbb{M}^{d \times d}  \to [0,+\infty)$ and $g\colon \Omega \times \R^d \times \mathbb{S}^{d-1} \to [0,+\infty)$ such that 
$$\mathcal{F}(u,B) = \int_B f\big(x,e(u)(x)\big)  \, {\rm d}x +    \int_{J_u\cap  B} g\big(x,[u](x),\nu_u(x)\big)\,  {\rm d}  \mathcal{H}^{d-1}(x), $$
where $[u](x) := u^+(x)-u^-(x)$.

\noindent (iv) A variant of the proof shows that, in the minimization problems \eqref{eq:fdef}--\eqref{eq:gdef}, one may replace balls $B_\eps(x_0)$ by cubes $Q^\nu_\eps(x_0)$ with sidelength $\eps$, centered at $x_0$, and two faces orthogonal to  $\nu=\nu_u(x_0)$.  
\noindent (v) An analogous result holds on the space $GSBV^p(\Omega;\R^m)$ for $m \in \N$. We refer to \BBB Section \ref{sec: appendix} \EEE for details. 

}
\end{remark}

We will additionally discuss the minor modifications needed in order to deal with functionals $\mathcal{F}\colon SBD^p(\Omega) \times \mathcal{B}(\Omega) \to  [0,+\infty)$ satisfying (H$_1$)--(H$_3$) and
\begin{itemize}
\item[(H$_4^\prime$)]  there exist $0 < \alpha  < \beta $ such that for any $u \in SBD^p(\Omega)$ and $B \in \mathcal{B}(\Omega)$  we have
$$\alpha \Big(\int_{ B  } |e(u)|^p  \dx    +   \int_{J_u \cap B}\!\!(1+|[u]|)\,\mathrm{d}\mathcal{H}^{d-1}\Big) \le \mathcal{F}(u,B) \le \beta \Big(\int_{ B  } (1 + |e(u)|^p)    \dx  +   \int_{J_u \cap B}\!\!(1+|[u]|)\,\mathrm{d}\mathcal{H}^{d-1}\Big).$$ 
\end{itemize}
In this case, $SBD^p(\Omega)$ (see Subsection \ref{sec: BD}) is the natural energy space for $\mathcal{F}$.  Furthermore,  sequences of competitors with bounded energy, which are converging in measure, are additionally $L^1$-convergent if we assume (H$_4^\prime$), due to the classical Korn-Poincar\'e inequality in $BD$ \BBB (see \cite[Theorem II.2.2]{Tem}). \EEE Hence, in this latter  case,  (H$_2$) is equivalent to requiring lower semicontinuity with respect to the $L^1$-convergence. 
The statement of the result in this setting, as well as of the changes needed in the proofs, will be given in Section~\ref{app: sbd}.

\section{Preliminaries}\label{sec: prel}

We start  this preliminary section  by introducing some further notation. For $E \subset \R^d$, $\eps>0$, and $x_0 \in \R^d$ we set
\begin{align}\label{eq: shift-not}
E_{\eps,x_0} := x_0 + \eps (E - x_0).    
\end{align} 
 The diameter of $E$ is indicated by ${\rm diam}(E)$.  Given two sets $E_1,E_2 \subset \R^d$, we denote their symmetric difference by $E_1 \triangle E_2$.   We write $\chi_E$ for the  characteristic  function of any $E\subset  \R^d$, which is 1 on $E$ and 0 otherwise.  If $E$ is a set of finite perimeter, we denote its essential boundary by $\partial^* E$,   see  \cite[Definition 3.60]{Ambrosio-Fusco-Pallara:2000}.   
 We  denote the set of symmetric and skew-symmetric matrices by  $\Mdd$ and  $\Mddskew$, respectively.   
 
%

\subsection{$BD$ and $GBD$ functions}\label{sec: BD}
 Let $U\subset \Rd$  be open. 
A function $v\in L^1(U;\Rd)$ belongs to the space of \emph{functions of bounded deformation},  denoted by $BD(U)$,   if 
 the distribution  $\mathrm{E}v := \frac{1}{2}( \mathrm{D}v + (\mathrm{D}v)^{\mathrm{T}} )$ is a bounded $\Mdd$-valued Radon measure on $U$,  
where $\mathrm{D}v=(\mathrm{D}_1 v,\dots, \mathrm{D}_d v)$ is the distributional differential.
It is well known (see \cite{ACD, Tem}) that for $v\in BD(U)$  the jump set  $J_v$ is 
\BBB countably $\hd$~-~rectifiable (in the sense of \cite[Definition~2.57]{Ambrosio-Fusco-Pallara:2000}), \EEE 
and that
\begin{equation*}
\mathrm{E}v=\mathrm{E}^a v+ \mathrm{E}^c v + \mathrm{E}^j v,
\end{equation*}
where $\mathrm{E}^a v$ is absolutely continuous with respect to $\Ld$, $\mathrm{E}^c v$ is singular with respect to $\Ld$ and such that $|\mathrm{E}^c v|(B)=0$ if $\hd(B)<\infty$, while $\mathrm{E}^j v$ is concentrated on $J_v$. The density of $\mathrm{E}^a v$ with respect to $\Ld$ is denoted by $e(v)$.

The space $SBD(U)$ is the subspace of all functions $v\in BD(U)$ such that $\mathrm{E}^c v=0$. For $p\in (1,\infty)$, we define
$SBD^p(U):=\{v\in SBD(U)\colon e(v)\in L^p(U;\Mdd),\, \hd(J_v)<\infty\}$.
For a complete treatment of $BD$ and $SBD$ functions, we refer to 
to  \cite{ACD, BCD,  Tem}.

The spaces $GBD(U)$ of \emph{generalized functions of bounded deformation} and $GSBD(U)\subset GBD(U)$  of \emph{generalized special functions of bounded deformation} have been introduced in \cite{DM} (cf.\ \cite[Definitions~4.1 and 4.2]{DM}), \BBB and are defined as follows.

\begin{definition}
Let $U\subset \Rd$ be a  bounded open set,  and let  $v\colon U\to \Rd$ be measurable. We introduce the notation  
$$
\Pi^\xi:=\{y\in \Rd\colon y\cdot \xi=0\},\qquad B^\xi_y:=\{t\in \R\colon y+t\xi \in B\} \ \ \ \text{ for any $y\in \Rd$ and $B\subset \Rd$}\,,
$$
for fixed  $\xi \in \Sd$, and for every  $t\in B^\xi_y$ we let
\begin{equation}\label{eq: vxiy}
v^\xi_y(t):=v(y+t\xi),\qquad \widehat{v}^\xi_y(t):=v^\xi_y(t)\cdot \xi\,.
\end{equation}
Then, $v\in GBD(U)$ if there exists $\lambda_v\in \mathcal{M}^+_b(U)$ such that $\widehat{v}^\xi_y \in BV_{\mathrm{loc}}(U^\xi_y)$ for $\hd$-a.e.\ $y\in \Pi^\xi$, and for every Borel set $B\subset U$ 
\begin{equation*}
\int_{\Pi^\xi} \Big(\big|\mathrm{D} {\widehat{v}}_y^\xi\big|\big(B^\xi_y\setminus J^1_{{\widehat{v}}^\xi_y}\big)+ \mathcal{H}^0\big(B^\xi_y\cap J^1_{{\widehat{v}}^\xi_y}\big)\Big)\dh(y)\leq \lambda_v(B)\,,
\end{equation*}
where
$J^1_{{\widehat{v}}^\xi_y}:=\left\{t\in J_{{\widehat{v}}^\xi_y} : |[{\widehat{v}}_y^\xi]|(t) \geq 1\right\}$.
Moreover, the function $v$ belongs to $GSBD(U)$ if $v\in GBD(U)$ and $\widehat{v}^\xi_y \in SBV_{\mathrm{loc}}(U^\xi_y)$ for 
every
$\xi \in \Sd$ and for $\hd$-a.e.\ $y\in \Pi^\xi$.
\end{definition}
\EEE

 We recall that
every $v\in GBD(U)$ has an \emph{approximate symmetric gradient} $e(v)\in L^1(U;\Mdd)$ and an \emph{approximate jump set} $J_v$ which is still 
\BBB countably $\hd$-rectifiable \EEE
(cf.~\cite[Theorem~9.1,  Theorem~6.2]{DM}).

The notation for $e(v)$ and $J_v$, which is the same as that one in the $SBD$ case, is consistent: in fact, if $v$  lies in  $SBD(U)$, the objects coincide (up to  negligible sets of points with respect to $\Ld$ and $\hd$, respectively). For 
$x\in J_v$  there exist $v^+(x)$, $v^-(x)\in \Rd$  and $\nu_v(x)\in\mathbb{S}^{d-1}$  
 such that
\begin{equation}\label{0106172148}
\lim_{\eps \to 0}\eps^{-d}\Ld\big(\{y \in B_\eps(x)\colon \pm(y-x)\cdot \nu_v(x)>0\} \cap \{|v-v^\pm(x)|>\varrho\}\big)=0
\end{equation}
 for every $\varrho>0$, and the 
function  $[v]:=v^+-v^- \colon J_v \to \Rd$ is measurable.  For $1 < p < \infty$, the   space $GSBD^p(U)$ is  given by  
\begin{equation*}
GSBD^p(U):=\{v\in GSBD(U)\colon e(v)\in L^p(U;\Mdd),\, \hd(J_v)<\infty\}.
\end{equation*}
 Any  function $v\in GSBD(U)$ with $[v]$ integrable belongs to $SBD(U)$, 
as follows from \cite[Theorem~2.9]{Crismale3} for $\mathbb{A}v={\rm E} v$ (see \cite[Remark~2.5]{Crismale3}). This corresponds to  the following proposition.
\begin{proposition}\label{prop: onlysbd}
If $v \in GSBD^p(U)$ is such that $[v] \in L^1(J_v; \Rd)$, then $v \in SBD^p(U)$.
\end{proposition}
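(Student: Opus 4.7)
The plan is to invoke the external structure result \cite[Theorem~2.9]{Crismale3}, specialized to the symmetric gradient operator $\mathbb{A}={\rm E}$ as indicated in \cite[Remark~2.5]{Crismale3}. That theorem asserts precisely that a $GSBD$-type function whose jump opening is integrable on its jump set actually lies in the strict subspace $SBD$, so the whole substance of the proposition is encoded in it and my job is essentially to check that its hypotheses are met.

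First, I would collect the available integrability. Since $v\in GSBD^p(U)$ with $U$ bounded, $e(v)\in L^p(U;\Mdd)\subset L^1(U;\Mdd)$ and $\mathcal{H}^{d-1}(J_v)<\infty$; combined with the standing assumption $[v]\in L^1(J_v;\R^d)$, both the diffuse and the jump parts of the candidate symmetric gradient measure are finite. Next, unraveling the $GSBD$ definition through the one-dimensional slicing identities (see \cite[Theorem~9.1]{DM}), I would apply Fubini's theorem to convert these two global bounds into an integrated slicewise $BV$ estimate: for every $\xi\in\Sd$,
\begin{equation*}
\int_{\Pi^\xi}|\mathrm{D}\widehat{v}^{\xi}_{y}|(U^{\xi}_{y})\dh(y) \;\le\; \int_U |e(v)|\dx \;+\; \int_{J_v}|[v]|\dh \;<\; \infty,
\end{equation*}
where I used that, on a.e.\ slice, $(\widehat{v}^{\xi}_{y})' = e(v)(y+t\xi)\xi\cdot\xi$ and the atomic part of $\mathrm{D}\widehat{v}^{\xi}_{y}$ is bounded by $|[v]\cdot\xi||\nu_v\cdot\xi|$ pushed forward onto the line. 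This is exactly the slicing input required by \cite[Theorem~2.9]{Crismale3} in the case $\mathbb{A}={\rm E}$.

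Applying that theorem then yields $v\in BD(U)$ with vanishing Cantor part, i.e.\ $v\in SBD(U)$. Combined with $e(v)\in L^p(U;\Mdd)$ and $\mathcal{H}^{d-1}(J_v)<\infty$, this produces $v\in SBD^p(U)$ as claimed. I do not foresee any obstacle internal to the present argument; the only genuinely non-trivial step is the cited theorem itself, whose proof employs Korn--Poincar\'e type controls to reconstruct $v$ (up to infinitesimal rigid motions on slices) from the integrable symmetric gradient and jump parts, thereby upgrading slicewise $BV$ information to global $BD$ regularity.
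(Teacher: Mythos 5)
Your proposal takes essentially the same route as the paper, which also deduces the proposition directly from \cite[Theorem~2.9]{Crismale3} specialized to $\mathbb{A}v = \mathrm{E}v$ (citing \cite[Remark~2.5]{Crismale3}) to conclude $v\in SBD(U)$, with the membership in $SBD^p(U)$ then immediate from $e(v)\in L^p(U;\Mdd)$ and $\mathcal{H}^{d-1}(J_v)<\infty$. The additional slicing computation you sketch is a reasonable account of what underlies the cited theorem, but is not needed to invoke it; the paper simply cites the result.
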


If $U$ has Lipschitz boundary, for each $v\in GBD(U)$ the traces on $\partial U$ are well defined  (see~\cite[Theorem~5.5]{DM}),   in the sense that for $\mathcal{H}^{d-1}$-a.e.\ $x \in  \partial U  $ there exists ${\rm tr}(v)(x) \in \R^d$ such that 
\begin{align}\label{eq: trace}
\lim_{\eps \to 0}\eps^{-d} \Ld\big(U \cap B_\eps(x)\cap \{|v-{\rm tr}(v)(x)|>\varrho\}\big)=0 \quad \quad \text{ for all $\varrho >0$}.
\end{align}

\subsection{Korn's inequality and fundamental estimate}

In this subsection we discuss two important tools which will be instrumental for the proof of Theorem \ref{theorem: PR-representation}. We start by  the following Korn and Korn-Poincar\'e  inequalities  in $GSBD$ for functions with small jump sets, see  \cite[Theorem 1.1, Theorem 1.2]{CCS}.  In the following, we say that $a\colon \R^d \to \R^d$ is an \emph{infinitesimal rigid motion} if $a$ is affine with $ e(a) = \frac{1}{2}  ( \nabla a + (\nabla a)^{\mathrm{T}})  = 0$.

\begin{theorem}[Korn inequality for functions with small jump set]\label{th: kornSBDsmall}
Let $\Omega \subset \R^d$ be a bounded Lipschitz domain and let $1 < p < +\infty$. Then there exists a constant $c = c(\Omega,p)>0$ such that for all  $u \in GSBD^p(\Omega)$ there is a set of finite perimeter $\omega \subset \Omega$ with 
\begin{align}\label{eq: R2main}
\mathcal{H}^{d-1}(\partial^* \omega) \le c\mathcal{H}^{d-1}(J_u), \ \ \ \ \mathcal{L}^d(\omega) \le c(\mathcal{H}^{d-1}(J_u))^{d/(d-1)}
\end{align}
and an infinitesimal  rigid motion $a$ such that
\begin{align}\label{eq: main estmain}
\Vert u - a \Vert_{L^{p}(\Omega \setminus \omega)} + \Vert \nabla u - \nabla a \Vert_{L^{p}(\Omega \setminus \omega)}\le c \Vert e(u) \Vert_{L^p(\Omega)}.
\end{align}
Moreover, there exists $v \in W^{1,p}(\Omega;\R^d)$ such that $v= u$ on $\Omega \setminus \omega$ and
\begin{align*}
\Vert e(v) \Vert_{L^p(\Omega)} \le c \Vert e(u) \Vert_{L^p(\Omega)}.
\end{align*}
\end{theorem}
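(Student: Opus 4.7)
The plan is to build $\omega$ as a union of ``bad'' dyadic cubes selected at the critical scale $\eta \sim \mathcal{H}^{d-1}(J_u)^{1/(d-1)}$ and to use one-dimensional slicing on the good cubes to produce local infinitesimal rigid motions that can be glued into a single global $a$. This scale is forced by the target estimates: at sidelength $\eta$, the number of bad cubes is of order $\mathcal{H}^{d-1}(J_u)/\eta^{d-1}$, giving total volume $\lesssim \eta\cdot \mathcal{H}^{d-1}(J_u) \sim \mathcal{H}^{d-1}(J_u)^{d/(d-1)}$ and boundary $\mathcal{H}^{d-1}$-measure $\lesssim \mathcal{H}^{d-1}(J_u)$, matching \eqref{eq: R2main}.

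First I would cover $\Omega$ (or, after a bi-Lipschitz chart, a reference cube) by a grid of cubes $\{Q_k\}$ of sidelength $\eta$ and call $Q_k$ \emph{good} if $\mathcal{H}^{d-1}(J_u \cap CQ_k) \le \lambda \eta^{d-1}$ for suitable constants $C,\lambda$, and \emph{bad} otherwise; let $\omega$ be (a mild enlargement of) the union of the bad cubes. On each good cube, pick $d$ linearly independent directions $\xi_1,\dots,\xi_d\in \mathbb{S}^{d-1}$ and look at the slices $\widehat{u}^{\xi_i}_y\in SBV_{\mathrm{loc}}((Q_k)^{\xi_i}_y)$: by Fubini most slices cross few jumps, and on each such slice a one-dimensional Poincar\'e-type inequality yields an affine approximation whose defect is controlled by $|e(u)|$. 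Integrating over $y\in \Pi^{\xi_i}$ and combining the slicewise affine data from the $d$ directions produces a local infinitesimal rigid motion $a_k$ with
\begin{equation*}
\|u - a_k\|_{L^p(Q_k\setminus \omega)} + \|\nabla u - \nabla a_k\|_{L^p(Q_k\setminus \omega)} \lesssim \|e(u)\|_{L^p(\widehat{Q}_k)}
\end{equation*}
on a slightly enlarged cube $\widehat{Q}_k$.

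The second main step is to glue the local rigid motions $a_k$ into a single $a$. On the overlap of two neighboring good cubes $Q_k, Q_\ell$ the difference $a_k - a_\ell$ is itself an infinitesimal rigid motion whose $L^p$-norm on the overlap is controlled by $\|e(u)\|_{L^p(Q_k \cup Q_\ell)}$; since infinitesimal rigid motions form a finite-dimensional space, this upgrades to a control on the coefficients of $a_k - a_\ell$. A chaining argument over a connected graph of good cubes then fixes one anchor and propagates an error bound to all other $a_k$, giving a global $a$ with accumulated error still controlled by $\|e(u)\|_{L^p(\Omega)}$. For the Sobolev extension $v$, set $v = u$ on $\Omega\setminus\omega$ and extend via a Whitney-type extension anchored on $a$ inside $\omega$; since $u - a\in W^{1,p}(\Omega\setminus\omega;\R^d)$ with controlled norm and $\partial^*\omega$ is made up of cube faces, classical Sobolev extension theory yields $v\in W^{1,p}(\Omega;\R^d)$ with $\|e(v)\|_{L^p(\Omega)} \le c\,\|e(u)\|_{L^p(\Omega)}$.

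The hard part will be the chaining/gluing step. Good cubes are only ``good on average'', so small exceptional subsets inside each cube must be absorbed into $\omega$ without inflating its volume past $(\mathcal{H}^{d-1}(J_u))^{d/(d-1)}$ or its perimeter past $\mathcal{H}^{d-1}(J_u)$; simultaneously, the rigid-motion error must stay uniformly bounded across chains whose length can be of order $\mathrm{diam}(\Omega)/\eta$. Making the overlap-to-coefficient estimate quantitative in a way that does not degrade with chain length---and sufficiently robust to accommodate $GSBD^p$, where $u$ itself is not a priori integrable and only the sliced functions $\widehat{u}^\xi_y$ are directly available---is the novel content of \cite{CCS} and is precisely where the higher-dimensional analysis becomes substantially harder than the planar argument of \cite{Conti-Focardi-Iurlano:15}.
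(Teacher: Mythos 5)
The paper does not prove Theorem~\ref{th: kornSBDsmall}: the sentence immediately preceding it cites \cite[Theorems~1.1, 1.2]{CCS} as its source, so there is no in-paper proof to compare your sketch against---you were effectively asked to reprove \cite{CCS}. Your outline correctly identifies the overall CCS architecture, and the dimensional analysis fixing the critical scale $\eta\sim\mathcal{H}^{d-1}(J_u)^{1/(d-1)}$ from the target bounds in \eqref{eq: R2main} is exactly right; but the sketch leaves unresolved precisely the points you yourself flag as ``the hard part,'' so it cannot count as a proof.

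Two concrete gaps. First, the slice $\widehat{u}^\xi_y(t)=u(y+t\xi)\cdot\xi$ has derivative $\xi^{\mathrm T}\nabla u(y+t\xi)\,\xi=\xi^{\mathrm T}e(u)\,\xi$, since $\xi^{\mathrm T}W\xi=0$ for any skew $W$; a single slice therefore carries no information about the skew part of $\nabla u$, and a rigid motion $a$ restricted to a slice is constant in $t$. Producing an infinitesimal rigid motion $a_k$ on a good cube together with a bound on $\nabla u-\nabla a_k$ must therefore exploit the $y$-dependence of the slicewise constants across a carefully chosen family of directions (and for the Korn-type full-gradient bound, strictly more than $d$ generic directions), not the ``combine $d$ slicewise affine data'' step you gesture at. Second, the gluing step cannot proceed by naively propagating overlap errors along chains of length $\mathrm{diam}(\Omega)/\eta$: since rigid motions grow linearly, the pointwise error of $a_1-a_N$ at distance $N\eta$ from the anchor is amplified by a factor of order $N$, and a direct summation would give a constant degenerating as $\eta\to 0$. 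Avoiding this requires a Whitney/Poincar\'e-type argument for the piecewise-constant skew field rather than chaining. Both of these are the substantive content of \cite{CCS}; acknowledging them without resolving them means the sketch is a road map to the cited reference, not an independent argument.
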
 
 
Note that the result is indeed only relevant for functions with sufficiently small jump set, as otherwise one can choose $\omega = \Omega$,  and \eqref{eq: main estmain} trivially holds. Note that, in \cite{CCS}, $\mathcal{L}^d(\omega) \le c(\mathcal{H}^{d-1}(J_u))^{d/(d-1)}$ has not been shown, but it readily follows from $\mathcal{H}^{d-1}(\partial^* \omega) \le c\mathcal{H}^{d-1}(J_u)$ by the isoperimetric inequality.

 \begin{remark}[Almost Sobolev regularity, constants, and scaling invariance]\label{rem: Korn-scaling}
 (i) More precisely, in \cite{CCS} it is proved that there exists $v \in W^{1,p}(\Omega;\R^d)$ such that $v= u$ on $\Omega \setminus \omega$ and $\Vert e(v) \Vert_{L^p(\Omega)} \le c \Vert e(u) \Vert_{L^p(\Omega)}$, whence by Korn's  and Poincar\'e's  inequality in $W^{1,p}(\Omega;\R^d)$ we get 
\begin{align*}
\Vert v - a \Vert_{L^{p}(\Omega)} + \Vert \nabla v - \nabla a \Vert_{L^{p}(\Omega)}\le c \Vert e(u) \Vert_{L^p(\Omega)}
\end{align*}
for an infinitesimal rigid motion  $a$.  This directly implies \eqref{eq: main estmain}, see   \cite[Theorem 4.1, Theorem 4.4]{CCS}.

(ii) Given a collection of bounded Lipschitz domains $(\Omega_k)_k$ which   are related through bi-Lipschitzian homeomorphisms with Lipschitz constants of both the homeomorphism itself and its inverse bounded uniformly in $k$, in Theorem \ref{th: kornSBDsmall}  we   can choose a constant $c$ uniformly for all $\Omega_k$, see \cite[Remark 4.2]{CCS}.

(iii)  Recall \eqref{eq: shift-not}.  Consider a bounded Lipschitz domain $\Omega$, $\eps >0$, and $x_0 \in \R^d$. Then for each $u \in GSBD^p(\Omega_{\eps,x_0})$ we find $\omega \subset \Omega_{\eps,x_0}$ and a rigid   motion $a$ such that
$$\mathcal{H}^{d-1}(\partial^* \omega) \le  C\mathcal{H}^{d-1}(J_u), \ \ \ \ \mathcal{L}^d(\omega)\le C\big(\mathcal{H}^{d-1}(J_u)\big)^{d/(d-1)}  $$
and 
$$\eps^{-1}\Vert u - a \Vert_{L^{p}(\Omega_{\eps,x_0} \setminus \omega)} + \Vert \nabla u - \nabla a \Vert_{L^{p}(\Omega_{\eps,x_0} \setminus \omega)} \le C  \Vert e(u) \Vert_{L^p(\Omega_{\eps,x_0})},  $$
where $C= C(\Omega,p) >0$ is independent of $\eps$. This follows by a standard rescaling argument. 
\end{remark}

>From Theorem~\ref{th: kornSBDsmall}, one can also deduce that for $u\in GSBD^p(\Omega)$ the \emph{approximate gradient} $\nabla u$ exists $\mathcal{L}^d$-a.e.\ in $\Omega$,  see \cite[Corollary 5.2]{CCS}.

\begin{lemma}[Approximate gradient]\label{lemma: approx-grad}
Let $\Omega \subset \R^d$ be  open, bounded with  Lipschitz boundary,   let $1 < p < +\infty$, and $u \in GSBD^p(\Omega)$. Then for $\mathcal{L}^d$-a.e.\   $x_0 \in \Omega$  there exists a matrix  in $\mathbb{M}^{d \times d}$,   denoted by $\nabla u(x_0)$, such that
$$\lim_{\eps \to 0} \  \eps^{-d} \mathcal{L}^d\Big(\Big\{x \in B_\eps(x_0) \colon \,  \frac{|u(x) - u(x_0) - \nabla u(x_0)(x-x_0)|}{|x - x_0|}  > \varrho   \Big\} \Big)  = 0 \text{ for all $\varrho >0$}. $$    
\end{lemma}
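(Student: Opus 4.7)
The statement is a consequence of the Korn inequality (Theorem \ref{th: kornSBDsmall}) applied at arbitrarily small scales around $x_0$.

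I would first restrict to the set of ``good'' points $x_0 \in \Omega$ (of full $\Ld$-measure) satisfying simultaneously: $u$ is approximately continuous at $x_0$ with approximate value $u(x_0)$; $x_0$ is a Lebesgue point of $e(u)$ with value $\xi := e(u)(x_0)$; and the jump set has vanishing density, $\hd(J_u \cap B_\eps(x_0)) = o(\eps^{d-1})$ as $\eps \to 0$. The first two hold $\Ld$-a.e.\ by standard Lebesgue theory, while the last one holds $\Ld$-a.e.\ outside $J_u$ because $J_u$ is countably $\hd$-rectifiable and $\hd(J_u) < \infty$ (in particular $\Ld(J_u) = 0$).

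Fix such an $x_0$ and set $\ell_\xi(x) := \xi(x - x_0)$, so that $e(u - \ell_\xi) = e(u) - \xi$. The Lebesgue-point property gives $\|e(u - \ell_\xi)\|_{L^p(B_\eps(x_0))} = \eta(\eps)\,\eps^{d/p}$ with $\eta(\eps) \to 0$ as $\eps \to 0$. Applying the rescaled Korn inequality of Remark \ref{rem: Korn-scaling}(iii) to $u - \ell_\xi$ on $B_\eps(x_0)$, I obtain an infinitesimal rigid motion $a_\eps(x) = b_\eps + A_\eps(x - x_0)$ with $A_\eps \in \Mddskew$, and an exceptional set $\omega_\eps \subset B_\eps(x_0)$ satisfying $\Ld(\omega_\eps) \leq c\big(\hd(J_u \cap B_\eps(x_0))\big)^{d/(d-1)} = o(\eps^d)$ and
\begin{equation*}
\|u - \ell_\xi - a_\eps\|_{L^p(B_\eps(x_0) \setminus \omega_\eps)} \leq C\eps \|e(u) - \xi\|_{L^p(B_\eps(x_0))} = C\eta(\eps)\eps^{1+d/p}.
\end{equation*}

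Next, combining this $L^p$-estimate with the approximate continuity of $u$ at $x_0$, and exploiting that $A_\eps(x - x_0)$ has zero mean over $B_\eps(x_0)$ by symmetry, one deduces $b_\eps \to u(x_0)$. For the linear part, I would use the companion gradient estimate $\|\nabla u - \xi - A_\eps\|_{L^p(B_\eps(x_0) \setminus \omega_\eps)} = o(\eps^{d/p})$, together with the Sobolev replacement $v_\eps \in W^{1,p}(B_\eps(x_0); \R^d)$ furnished by Theorem \ref{th: kornSBDsmall} (so that $\nabla u = \nabla v_\eps$ is well-defined on $B_\eps(x_0) \setminus \omega_\eps$), combined with the classical Korn-Poincar\'e inequality for $v_\eps$, to show that $\{A_\eps\}_\eps$ stays bounded. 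Extract a subsequence $A_{\eps_k} \to A_0 \in \Mddskew$ and define $\nabla u(x_0) := \xi + A_0$. Then
\begin{equation*}
\|u - u(x_0) - \nabla u(x_0)(\cdot - x_0)\|_{L^p(B_\eps(x_0) \setminus \omega_\eps)} = o(\eps^{1+d/p}),
\end{equation*}
and a dyadic annular decomposition of $B_\eps(x_0)$ combined with Chebyshev's inequality converts this into the desired measure statement: for every $\varrho > 0$,
\begin{equation*}
\eps^{-d}\,\Ld\big(\{x \in B_\eps(x_0) : |u(x) - u(x_0) - \nabla u(x_0)(x-x_0)| > \varrho|x-x_0|\}\big) \to 0.
\end{equation*}
Uniqueness of the approximate gradient (when it exists) ensures the limit $A_0$ is independent of the chosen subsequence, so the full limit exists.

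The main obstacle I expect is the boundedness and (sub)convergence of $\{A_\eps\}_\eps$: the Korn estimate only yields the natural scaling $o(\eps^{1+d/p})$ for the $L^p$ error, which translates into $|A_\eps - A_{\eps/2}| \to 0$ but not into a summable Cauchy condition. Hence one must rely on a compactness argument based on the classical Sobolev-Korn estimate on the extension $v_\eps$, combined with the approximate-continuity control at the largest fixed scale, rather than on a direct telescoping summation across dyadic scales.
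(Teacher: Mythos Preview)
The paper does not prove this lemma but quotes it from \cite[Corollary~5.2]{CCS}. Your blow-up strategy is natural---it mirrors Lemma~\ref{lemma: blow up} of the paper---but there the approximate differentiability (the present lemma) is taken as an \emph{input}, see \eqref{eq: given properties}(iii), and is what forces $a_\eps\to 0$ in \eqref{eq: small pert}. Reversing the logic leaves exactly the gap you flag: dyadic comparison gives only $|A_\eps-A_{\eps/2}|\le C\eta(\eps)$ with $\eta(\eps)\to 0$, and $\sum_k\eta(2^{-k})$ need not be finite, so $(A_\eps)_\eps$ is not Cauchy in general. The subsequence-plus-uniqueness fix is circular: uniqueness of the approximate gradient presupposes that the \emph{full} limit $\eps\to 0$ in the measure statement holds for some matrix, whereas convergence of $A_{\eps_k}$ delivers that statement only along the subsequence $\eps_k$. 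The boundedness argument via the Sobolev extension $v_\eps$ does not close either, since classical Korn on $B_\eps$ controls $\nabla v_\eps$ only modulo a skew matrix---the very object in question---and approximate continuity of $u$ yields merely $\eps|A_\eps|\le C\varrho$ for each fixed $\varrho>0$, hence no uniform bound as $\eps\to 0$.

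The argument in \cite{CCS} bypasses the scale-matching problem by a Lusin-type route: covering $\Omega$ by small cubes and applying Theorem~\ref{th: kornSBDsmall} on each, one produces for every $\delta>0$ a set $E_\delta\subset\Omega$ with $\Ld(E_\delta)<\delta$ and, on each cube, a $W^{1,p}$ function $v$ coinciding with $u$ off $E_\delta$. At every point of density $1$ for $\{u=v\}$ where the (classical) approximate gradient of $v$ exists, it is also an approximate gradient of $u$. Hence the non-differentiability set of $u$ has measure at most $\delta$ for every $\delta>0$, and is therefore null. No compactness for $(A_\eps)_\eps$ is needed.
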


We point out that the result in Lemma \ref{lemma: approx-grad} has already been obtained in \cite{Friedrich:15-4}  for $p=2$, as a consequence of the embedding $GSBD^2(\Omega) \subset (GBV(\Omega))^d$, see \cite[Theorem 2.9]{Friedrich:15-4}.

   To control the affine mappings appearing in Theorem \ref{th: kornSBDsmall}, we will make use of the following  elementary lemma on  affine mappings, see, e.g.,    \cite[Lemma~3.4]{FM} or \cite[Lemmas~4.3]{Conti-Iurlano:15} for similar statements.  (It is obtained by the equivalence of norms in finite dimensions and by standard rescaling arguments.) 
		
\begin{lemma}\label{lemma: rigid motion}
Let $1 \le p < + \infty$, let $x_0 \in \R^d$, and let $R, \theta >0$.   Let $a\colon \R^d \to \R^d$ be affine, defined by $a(x) = A\,x + b$ for $x \in \R^d$,   and let $E \subset B_R(x_0) \subset \R^d$ with  $\mathcal{L}^d(E) \ge \theta \mathcal{L}^d(B_R(x_0))$.  Then, there exists a constant $c_0>0$ only depending on $p$ and $\theta$  such that 
\begin{align*}
\Vert a\Vert_{L^p(B_R(x_0))} \le \gamma_d^{\frac{1}{p}} R^{\frac{d}{p}} \Vert a\Vert_{L^\infty(B_R(x_0))}  \le c_0 \Vert  a \Vert_{L^p(E)}, \quad \quad \quad    |A| \le   c_0   R^{-1 -\frac{d}{p}}  \Vert  a \Vert_{L^p(E)}.
\end{align*}
\end{lemma}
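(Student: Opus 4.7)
The first bound $\Vert a\Vert_{L^p(B_R(x_0))} \le \gamma_d^{1/p} R^{d/p} \Vert a\Vert_{L^\infty(B_R(x_0))}$ is immediate from H\"older together with $\Ld(B_R(x_0)) = \gamma_d R^d$, so the real content of the lemma is the second inequality $\gamma_d^{1/p} R^{d/p}\Vert a\Vert_{L^\infty(B_R(x_0))} \le c_0 \Vert a\Vert_{L^p(E)}$. My plan is to first remove the dependence on $x_0$ and $R$ via the change of variables $y = (x-x_0)/R$; setting $\tilde a(y) := a(x_0+Ry)$ and $\tilde E := (E-x_0)/R \subset B_1(0)$, the hypothesis $\Ld(E)\ge \theta \Ld(B_R(x_0))$ becomes $\Ld(\tilde E)\ge \theta \gamma_d$, the factors of $R$ cancel on both sides, and the claim reduces to
$$\Vert \tilde a\Vert_{L^\infty(B_1(0))} \le c_0 \Vert \tilde a\Vert_{L^p(\tilde E)}$$
for a constant $c_0 = c_0(p,\theta)$.

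To prove the rescaled inequality I argue by contradiction and compactness. Suppose sequences $\tilde a_n$ of affine maps and $\tilde E_n \subset B_1(0)$ with $\Ld(\tilde E_n)\ge \theta \gamma_d$ violate the estimate; after normalization one may assume $\Vert \tilde a_n\Vert_{L^\infty(B_1(0))} = 1$ and $\Vert \tilde a_n\Vert_{L^p(\tilde E_n)} \to 0$. The affine maps $\R^d \to \R^d$ form a finite-dimensional vector space on which $\Vert \cdot \Vert_{L^\infty(B_1(0))}$ is a norm (an affine map vanishing on an open set is identically zero), so up to a subsequence $\tilde a_n \to \tilde a$ uniformly on $\bar B_1(0)$ with $\Vert \tilde a\Vert_{L^\infty(B_1(0))}=1$, hence $\tilde a \not\equiv 0$. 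The crucial geometric observation is that the zero set of a nonzero affine map $\R^d\to\R^d$, being either empty or a proper affine subspace of dimension at most $d-1$, has zero Lebesgue measure. Consequently, by continuity of $|\tilde a|^p$, there exists $\delta>0$ with $\Ld(\{x\in B_1(0)\colon |\tilde a(x)|<\delta\}) < \theta\gamma_d/2$, so
$$\Vert \tilde a\Vert_{L^p(\tilde E_n)}^p \ge \delta^p \Ld\big(\tilde E_n\cap\{|\tilde a|\ge \delta\}\big) \ge \delta^p\,\theta\gamma_d/2 > 0.$$
On the other hand, the triangle inequality combined with uniform convergence gives $\Vert \tilde a\Vert_{L^p(\tilde E_n)} \le \gamma_d^{1/p}\Vert \tilde a - \tilde a_n\Vert_{L^\infty(B_1(0))} + \Vert \tilde a_n\Vert_{L^p(\tilde E_n)} \to 0$, a contradiction.

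Finally, for the matrix estimate, I plan to separate $A$ from $b$ by differencing: for any $e \in \Sd$, the points $x_\pm := x_0 \pm Re$ lie in $\bar B_R(x_0)$ and satisfy $a(x_+)-a(x_-) = 2RAe$, so $|Ae| \le R^{-1}\Vert a\Vert_{L^\infty(B_R(x_0))}$; taking the supremum over $e \in \Sd$ and combining with the second inequality yields $|A| \le c_0 R^{-1-d/p}\Vert a\Vert_{L^p(E)}$ (with $c_0$ updated by at most a dimensional factor if $|\cdot|$ denotes the Frobenius rather than the operator norm). I expect the main obstacle to be ensuring the compactness argument is robust under the freedom of the sets $\tilde E_n$ to drift inside $B_1(0)$ and potentially concentrate near $\{\tilde a=0\}$; it is precisely the quantitative smallness of the sublevel sets of $|\tilde a|$, rather than a naive weak-$*$ limit of $\chi_{\tilde E_n}$, that rules this possibility out and makes the argument go through uniformly in $\theta$.
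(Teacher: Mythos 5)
Your proof is correct and matches the paper's (very brief) sketch, which simply invokes ``the equivalence of norms in finite dimensions and standard rescaling arguments.'' Your compactness/contradiction argument is the right way to make that precise here, since $\Vert\cdot\Vert_{L^p(E)}$ is not a single norm but a family varying with $E$; the key observation — that the sublevel sets $\{|\tilde a|<\delta\}$ of the nonzero limit have small measure, giving a lower bound on $\Vert\tilde a\Vert_{L^p(\tilde E_n)}$ that is uniform over all admissible $\tilde E_n$ — is exactly what is needed and is correctly executed, as is the antipodal-differencing bound for $|A|$. One minor wording point: your closing phrase ``uniformly in $\theta$'' should be ``for each fixed $\theta$,'' since $c_0$ does (and must) depend on $\theta$; the mathematics is unaffected.
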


 We now proceed with   another consequence of Theorem~\ref{th: kornSBDsmall}.

\begin{corollary}\label{cor: kornSBDsmall}
Let $\Omega \subset \R^d$ be a bounded Lipschitz domain and let $1 < p < +\infty$. Then there exists a constant $ C  = C(\Omega,p)>0$ such that for all  $u \in GSBD^p(\Omega)$ with trace  $ {\rm tr}(u)   = 0$ on $\partial \Omega$  (see \eqref{eq: trace})   there is a set of finite perimeter $\omega \subset \R^d$ with 
\begin{align}\label{eq: R2main-cor}
\mathcal{H}^{d-1}(\partial^* \omega) \le  C  \mathcal{H}^{d-1}(J_u), \ \ \ \ \mathcal{L}^d(\omega) \le  C  (\mathcal{H}^{d-1}(J_u))^{d/(d-1)}
\end{align}
such that
\begin{align}\label{eq: main estmain-cor}
\Vert u \Vert_{L^{p}(\Omega \setminus \omega)} + \Vert \nabla u \Vert_{L^{p}(\Omega \setminus \omega)}\le  C  \Vert e(u) \Vert_{L^p(\Omega)}.
\end{align}
\end{corollary}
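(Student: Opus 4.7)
My plan is to deduce Corollary~\ref{cor: kornSBDsmall} from Theorem~\ref{th: kornSBDsmall} by extending $u$ by zero to a slightly larger domain: the extra room will let me control the infinitesimal rigid motion $a$ that appears in the conclusion of Theorem~\ref{th: kornSBDsmall}, and it is precisely here that the zero-trace hypothesis enters.

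First I would fix a bounded Lipschitz domain $\Omega' \supset \overline\Omega$ with $\mathcal{L}^d(\Omega' \setminus \overline\Omega) > 0$, say $\Omega' = B_R$ a ball containing $\overline\Omega$, and set $\tilde u := u$ on $\Omega$ and $\tilde u := 0$ on $\Omega' \setminus \overline\Omega$. The condition $\mathrm{tr}(u) = 0$ on $\partial\Omega$ guarantees via \eqref{eq: trace} that no jump is created across $\partial\Omega$, so that $\tilde u \in GSBD^p(\Omega')$ with $J_{\tilde u} = J_u$ up to $\mathcal{H}^{d-1}$-null sets, $e(\tilde u) = e(u)\chi_\Omega$ and $\nabla \tilde u = \nabla u\,\chi_\Omega$. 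Applying Theorem~\ref{th: kornSBDsmall} to $\tilde u$ on $\Omega'$ then produces a set $\omega' \subset \Omega'$ satisfying \eqref{eq: R2main-cor} (with $J_{\tilde u} = J_u$) and an infinitesimal rigid motion $a$ with
$$\Vert \tilde u - a\Vert_{L^p(\Omega' \setminus \omega')} + \Vert \nabla \tilde u - \nabla a \Vert_{L^p(\Omega' \setminus \omega')} \le c\Vert e(u)\Vert_{L^p(\Omega)}.$$
I take $\omega := \omega'$, so that the perimeter and volume bounds in \eqref{eq: R2main-cor} are immediate.

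To bound $a$ itself I would split into two cases. If $\mathcal{L}^d(\omega') \le \tfrac{1}{2}\mathcal{L}^d(\Omega' \setminus \overline\Omega)$, the set $E := (\Omega' \setminus \overline\Omega) \setminus \omega'$ has volume bounded below by a fixed positive constant, and since $\tilde u = 0$ on $E$ the Korn estimate yields $\Vert a\Vert_{L^p(E)} \le c\Vert e(u)\Vert_{L^p(\Omega)}$. Lemma~\ref{lemma: rigid motion} applied on the ball $\Omega' = B_R$ with the set $E$ then upgrades this to $\Vert a\Vert_{L^p(\Omega')} + |\nabla a| \le C\Vert e(u)\Vert_{L^p(\Omega)}$, and the triangle inequality on $\Omega \setminus \omega \subset \Omega' \setminus \omega'$ (using $\tilde u = u$ there) delivers \eqref{eq: main estmain-cor}. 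In the complementary regime $\mathcal{L}^d(\omega') > \tfrac{1}{2}\mathcal{L}^d(\Omega' \setminus \overline\Omega)$, the volume bound in \eqref{eq: R2main-cor} forces $\mathcal{H}^{d-1}(J_u) \ge c_1 > 0$; I then redefine $\omega$ to be a fixed open neighbourhood of $\overline\Omega$ whose constant perimeter and volume are absorbed into $C$ using this lower bound, which makes $\Omega \setminus \omega = \emptyset$ so that \eqref{eq: main estmain-cor} holds vacuously.

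The main (mild) obstacle I expect is this degenerate case, since Theorem~\ref{th: kornSBDsmall} delivers no useful estimate when the exceptional set is allowed to swallow the whole domain. Fortunately the isoperimetric-type bound $\mathcal{L}^d(\omega) \le c\mathcal{H}^{d-1}(J_u)^{d/(d-1)}$ built into that theorem forces $\mathcal{H}^{d-1}(J_u)$ to be bounded away from zero in exactly that regime, which permits the trivial choice. Everything else amounts to a careful rescaling of known facts together with the observation that ${\rm tr}(u) = 0$ is precisely what is needed to legally extend $u$ by zero without generating a spurious jump on $\partial \Omega$.
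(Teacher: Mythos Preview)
Your proposal is correct and follows essentially the same approach as the paper: extend $u$ by zero to a larger domain $\Omega'$, apply Theorem~\ref{th: kornSBDsmall} there, and exploit the region $\Omega'\setminus\Omega$ (where $\tilde u=0$) together with Lemma~\ref{lemma: rigid motion} to absorb the rigid motion $a$, treating the large-jump regime trivially by taking $\omega\supset\Omega$. The only cosmetic differences are that the paper disposes of the degenerate case before applying Theorem~\ref{th: kornSBDsmall} (by thresholding directly on $\mathcal{H}^{d-1}(J_u)$ rather than on $\mathcal{L}^d(\omega')$), and it does not insist that $\Omega'$ be a ball.
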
 

\begin{proof}
We start by choosing a  bounded  Lipschitz domain $\Omega' \subset \R^d$ with $\Omega \subset \subset \Omega'$. Each $u \in GSBD^p(\Omega)$ with $ {\rm tr}(u)  =0$ on $\partial \Omega$ can be extended to a function $\tilde{u} \in GSBD^p(\Omega')$ by $\tilde{u} = 0 $ on $\Omega' \setminus \Omega$ such that $J_{\tilde{u}} = J_u$. We first note that it is not restrictive to assume that 
\begin{align}\label{eq: small jump}
\mathcal{H}^{d-1}(J_u) \le \Big(\frac{\mathcal{L}^d(\Omega' \setminus \Omega)}{2c} \Big)^{(d-1)/d},
\end{align}
where $c=c(\Omega',p)>0$ is the constant of Theorem \ref{th: kornSBDsmall}.  In fact, otherwise we could take $\omega = \Omega$ and the statement would be trivially satisfied since \eqref{eq: main estmain-cor} is clearly trivial and for   \eqref{eq: R2main-cor} we use that
$$\mathcal{H}^{d-1}(\partial^*\Omega) \le   C\Big(\frac{\mathcal{L}^d(\Omega' \setminus \Omega)}{2c} \Big)^{(d-1)/d}, \quad \quad \quad \quad     \mathcal{L}^d(\Omega) \le C\frac{\mathcal{L}^d(\Omega' \setminus \Omega)}{2c}     $$  
for a sufficiently large constant $C>0$ depending only  on  $\Omega$ and $\Omega'$. 

Now, consider a function $u$ satisfying \eqref{eq: small jump}. We apply Theorem \ref{th: kornSBDsmall} on $\tilde{u} \in GSBD^p(\Omega')$ and obtain a set  ${\omega} \subset \Omega' \subset \R^d$ satisfying   \eqref{eq: R2main} as well as  an infinitesimal rigid motion $a$   such that 
\begin{align}\label{eq: main estmain-NNN}
\Vert \tilde{u} - a \Vert_{L^{p}(\Omega' \setminus \omega)} + \Vert \nabla \tilde{u} -  \nabla a  \Vert_{L^{p}(\Omega' \setminus \omega)}\le c \Vert e(\tilde{u}) \Vert_{L^p(\Omega')} = c\Vert e({u}) \Vert_{L^p(\Omega)}.
\end{align}
In particular,   $\tilde{u} = 0$ on $\Omega'\setminus \Omega$ implies
\begin{align}\label{eq: rigi}
 \Vert a \Vert_{L^p( \Omega' \setminus (\Omega \cup \omega)   )} \le   c \Vert e(u) \Vert_{L^p(\Omega)}. 
 \end{align}
 By  \eqref{eq: R2main} and \eqref{eq: small jump} we get $\mathcal{L}^d(\omega) \le \frac{1}{2} \mathcal{L}^d(\Omega' \setminus \Omega)$. 
In view of \eqref{eq: rigi}, we apply Lemma \ref{lemma: rigid motion}    on $E = \Omega' \setminus (\Omega \cup \omega)$ with   $R = {\rm diam}(\Omega')$ and $\theta = \frac{1}{2} \mathcal{L}^d(\Omega' \setminus \Omega) / \gamma_d R^d$   
 to get 
$$\Vert a \Vert_{L^p(\Omega')} \le c \Vert a \Vert_{L^p(\Omega' \setminus (\Omega \cup \omega))}   \le c\Vert e(u) \Vert_{L^p(\Omega)},$$ 
and, in a  similar fashion, $  |\nabla a|  \le c\Vert e(u) \Vert_{L^p(\Omega)}$, where $c>0$ depends on $\Omega$, $\Omega'$,  and $p$.  Then, \eqref{eq: main estmain-cor} follows from \eqref{eq: main estmain-NNN}, the triangle inequality,   and the fact that $u = \tilde{u}$ on $\Omega$.  
\end{proof}

 We conclude this  subsection with another  important tool in the proof of the integral representation,  namely  a fundamental estimate in $GSBD^p$.

 \begin{lemma}[Fundamental estimate in $GSBD^p$]\label{lemma: fundamental estimate}
Let $\Omega \subset \R^d$ be open, bounded with Lipschitz boundary, and let $1 < p <+\infty$.  Let $\eta >0$ and  let $A, A', \B \in \mathcal{A}(\Omega)$ with $A' \subset \subset  A$. For every functional $\mathcal{F}$  satisfying {\rm (H$_1$)}, {\rm (H$_3$)}, and {\rm (H$_4$)}   and for every $u \in GSBD^p(A)$, $v \in GSBD^p(\B)$ there exists a function $\varphi \in C^\infty(\R^d;[0,1])$  such that  $w :=  \varphi u + (1- \varphi)v \in GSBD^p(A' \cup \B)$ satisfies
\begin{align}\label{eq: assertionfund}
{\rm (i)}& \ \ \mathcal{F} ( w, A' \cup \B) \le  (1+ \eta)\big(\mathcal{F}(u,A)  + \mathcal{F}(v, \B) \big) + M \Vert u-v \Vert^p_{L^p((A \setminus A') \cap \B)} +\eta\mathcal{L}^d(A' \cup \B), \notag \\ 
{\rm (ii)} & \ \  w = u \text{ on } A' \text{ and } w = v \text{ on } \B \setminus A,
\end{align}
where $M=M(A,A',\B,p,\eta)>0$ depends only on $A,A',\B,p,\eta$, but is independent of $u$ and $v$. Moreover, if for $\eps>0$  and $x_0 \in \R^d$ we have  $A_{\eps,x_0}, A'_{\eps,x_0}, \B_{\eps,x_0} \subset \Omega$,  then 
\begin{align}\label{eq: constant-scal}
 M(A_{\eps,x_0}, A'_{\eps,x_0},\B_{\eps,x_0},p,\eta)  = \eps^{-p}M(A,A',\B,p,\eta),
\end{align} 
 where we used the notation introduced in \eqref{eq: shift-not}. 

 The same statement holds if $\mathcal{F}$ satisfies {\rm (H$_4^\prime$)},  $u \in SBD^p(A)$, and $v \in SBD^p(\B)$.
\end{lemma}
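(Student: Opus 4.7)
The plan is a classical sandwich/averaging construction, carried out so as to use only (H$_1$), (H$_3$), and (H$_4$). Without loss of generality I assume $\Vert u-v\Vert_{L^p((A\sm A')\cap \B)}<+\infty$, since otherwise (i) is vacuous.

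First, I fix a large integer $N$, to be chosen at the end in terms of $\eta,\alpha,\beta,p$, and pick nested open sets
\[
A' = A_0 \subset\subset A_1 \subset\subset \cdots \subset\subset A_N \subset\subset A,
\]
separated by distances of order $N^{-1}\mathrm{dist}(A',\partial A)$. For each $i$ I take $\varphi_i \in C^\infty_c(A_i;[0,1])$ with $\varphi_i \equiv 1$ in a neighborhood of $A_{i-1}$ and $\Vert\nabla\varphi_i\Vert_\infty \le C_0 N/\mathrm{dist}(A',\partial A)$, and set $w_i := v + \varphi_i(u-v)$. By construction $w_i = u$ on $A'\subset A_{i-1}$, $w_i = v$ on $(A'\cup\B)\sm A_i \subset \B$, and on the transition strip $S_i := (A_i\sm A_{i-1})\cap(A'\cup\B) \subset A\cap\B$ both $u$ and $v$ are available. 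Smoothness of $\varphi_i$ together with the $L^p$ integrability of $u-v$ on $(A\sm A')\cap\B$ then gives $w_i \in GSBD^p(A'\cup\B)$ with
\[
e(w_i)=\varphi_i\,e(u)+(1-\varphi_i)\,e(v)+\tfrac12\bigl(\nabla\varphi_i\otimes(u-v)+(u-v)\otimes\nabla\varphi_i\bigr)
\]
a.e.\ and $J_{w_i}\subset J_u\cup J_v$.

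The key step is a localized energy estimate on $S_i$. Combining the upper bound in (H$_4$) with the convex inequality $|e(w_i)|^p\le 2^{p-1}\bigl(|e(u)|^p+|e(v)|^p+\Vert\nabla\varphi_i\Vert_\infty^p|u-v|^p\bigr)$ and the lower bound in (H$_4$) applied to $u$ and $v$ on $S_i$, I would obtain
\[
\mathcal{F}(w_i,S_i)\le \tfrac{C_p\beta}{\alpha}\bigl(\mathcal{F}(u,S_i)+\mathcal{F}(v,S_i)\bigr)+C_p\beta\,\Ld(S_i)+C_p\beta\,\Vert\nabla\varphi_i\Vert_\infty^p\,\Vert u-v\Vert_{L^p(S_i)}^p.
\]
Since the $S_i$ are pairwise disjoint, $\bigcup_i S_i \subset A\cap\B$, and the supports of $\nabla\varphi_i$ are disjoint, summing in $i$ (using (H$_1$)) and then choosing $i^*\in\{1,\dots,N\}$ that minimizes $\mathcal{F}(w_i,S_i)$ loses a factor $N^{-1}$. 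Decomposing $\mathcal{F}(w_{i^*},A'\cup\B)$ via (H$_1$) and (H$_3$) into its parts on $A_{i^*-1}$ (where $w_{i^*}=u$), on $S_{i^*}$, and on $(A'\cup\B)\sm A_{i^*}$ (where $w_{i^*}=v$) leads to
\[
\mathcal{F}(w_{i^*},A'\cup\B)\le \bigl(1+\tfrac{C}{N}\bigr)\bigl(\mathcal{F}(u,A)+\mathcal{F}(v,\B)\bigr)+\tfrac{C}{N}\Ld(A)+\tfrac{C\,N^{p-1}}{\mathrm{dist}(A',\partial A)^p}\,\Vert u-v\Vert^p_{L^p((A\sm A')\cap \B)}.
\]
Taking $N$ large, depending only on $\eta,\alpha,\beta,p,A,A',\B$, makes the first factor $\le 1+\eta$ and the residual $\Ld(A)/N$ absorbable into $\eta\,\Ld(A'\cup\B)$ (the case $\Ld(A'\cup\B)=0$ being trivial). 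Setting $\varphi := \varphi_{i^*}$ then delivers (i), while (ii) holds by construction.

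Finally, \eqref{eq: constant-scal} follows because $\mathrm{dist}(A'_{\eps,x_0},\partial A_{\eps,x_0})=\eps\,\mathrm{dist}(A',\partial A)$, so $M$ gains a factor $\eps^{-p}$ while the integer $N$ stays the same. The $SBD^p$ variant under (H$_4^\prime$) follows the same scheme: one adds only the pointwise inequality $|[w_i]|\le|[u]|+|[v]|$ on $J_u\cup J_v$, and estimates the extra jump-opening contribution in the upper part of (H$_4^\prime$) again via the lower bound in (H$_4^\prime$). The main obstacle I anticipate is the bookkeeping needed to verify that $w_i\in GSBD^p(A'\cup\B)$ and to handle the measure decomposition cleanly in the overlap region $(A\sm A')\cap\B$, where both $u$ and $v$ contribute; once that is in place, the rest is the routine averaging argument above.
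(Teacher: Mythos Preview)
Your proposal is correct and follows essentially the same route as the paper's proof: nested layers $A'\subset\subset A_1\subset\subset\cdots\subset\subset A$, cutoff functions $\varphi_i$, the pointwise bound on $e(w_i)$ combined with (H$_4$) to estimate the energy on the transition strip, and a pigeonhole/averaging step to select one good index. The only cosmetic differences are that the paper fixes the number of layers $k$ explicitly as $k\ge\max\{3^{p-1}\beta/(\eta\alpha),\beta/\eta\}$ from the outset rather than choosing $N$ at the end, and derives the scaling \eqref{eq: constant-scal} by directly rescaling the cutoffs $\varphi_i^\eps(x)=\varphi_i(x_0+(x-x_0)/\eps)$ rather than via the distance; both lead to the same $\eps^{-p}$ factor.
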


 In the statement  above,  we  intend  that $\Vert u-v \Vert^p_{L^p((A \setminus A') \cap \B)}=+\infty$ if $u-v \notin L^p((A \setminus A') \cap \B)$.

 \begin{proof}
  The proof follows the lines of \cite[Proposition 3.1]{Braides-Defranceschi-Vitali}.   Choose $k \in \N$ such that
\begin{align}\label{eq: kdef}
k \ge \max\Big\{\frac{ 3^{p-1}\beta  }{\eta \alpha}, \frac{\beta}{\eta}   \Big\}.
\end{align}
Let $A_1,\ldots,A_{k+1}$ be open subsets of $\R^d$ with $A' \subset \subset A_1 \subset \subset \ldots \subset \subset A_{k+1} \subset \subset A$. For $i=1,\ldots,k$ let $\varphi_i\in C_0^\infty(A_{i+1};[0,1])$ with $\varphi_i=1$ in a neighborhood $V_i$ of $\overline{A_i}$. 

 Consider $u \in GSBD^p(A)$ and  $v \in GSBD^p(\B)$. We   can clearly assume that $u-v \in L^p((A \setminus A') \cap \B)$ as otherwise the result is trivial.    We   define the function $w_i = \varphi_i u + (1-\varphi_i)v \in GSBD^p(A' \cup \B)$, where $u$ and $v$ are extended arbitrarily outside $A$ and $\B$, respectively.  Letting $T_i = \B \cap (A_{i+1} \setminus \overline{A_i})$ we get by (H$_1$)  and (H$_3$) 
\begin{align}\label{eq: wi}
\mathcal{F}(w_i,A' \cup \B) &\le \mathcal{F}(u, (A' \cup \B) \cap V_i) + \mathcal{F}(v, \B \setminus {\rm supp}(\varphi_i))
+  \mathcal{F}(w_i,T_i) \notag \\
&\le \mathcal{F}(u,A) + \mathcal{F}(v,\B) + \mathcal{F}(w_i,T_i). 
\end{align} 
 For the last term, we compute  using  (H$_4$)   ($\odot$ denotes the symmetrized vector product) 
\begin{align*}
& \mathcal{F}(w_i,T_i)  \le  \beta \int_{T_i} (1+|e(w_i)|^p)\, {\rm d}x + \beta \mathcal{H}^{d-1}(J_{w_i} \cap T_i) \\
& \le \beta  \int_{T_i} (1+|\varphi_i e(u) + (1-\varphi_i)e(v) + \nabla \varphi_i \odot (u-v) |^p) + \beta \mathcal{H}^{d-1}( (J_{u} \cup J_v) \cap T_i) \\
& \le \beta\mathcal{L}^d(T_i) + 3^{p-1}\beta  \int_{T_i} \big(|e(u)|^p + |e(v)|^p + |\nabla \varphi_i|^p |u-v|^p \big) + \beta \mathcal{H}^{d-1}( J_{u}  \cap T_i) + \beta \mathcal{H}^{d-1}( J_v \cap T_i)  \\
& \le  3^{p-1}  \beta\alpha^{-1} \big(  \mathcal{F}(u,T_i) +  \mathcal{F}(v,T_i)\big) + 3^{p-1}\beta \Vert \nabla \phi_i \Vert_\infty^p \Vert u- v\Vert^p_{L^p(T_i)} + \beta\mathcal{L}^d(T_i).
\end{align*}
 Notice that we can obtain the same estimate also  if $\mathcal{F}$ satisfies {\rm (H$_4^\prime$)},  $u \in SBD^p(A)$, and $v \in SBD^p(\B)$.  (We refer to  \cite[Proof of Proposition 3.1]{Braides-Defranceschi-Vitali} for details.)   Consequently, recalling \eqref{eq: kdef}  and using (H$_1$)  we find $i_0 \in \lbrace 1, \ldots, k \rbrace$ such that
$$ \mathcal{F}(w_{i_0},T_{i_0}) \le \frac{1}{k}\sum_{i=1}^k  \mathcal{F}(w_{i},T_i) \le \eta \big(  \mathcal{F}(u,A) +  \mathcal{F}(v,\B)\big) + M \Vert u-v \Vert^p_{L^p((A \setminus A') \cap \B)} + \eta \mathcal{L}^d( (A \setminus A') \cap \B), $$ 
where $M :=  3^{p-1}  \beta k^{-1} \max_{i=1,\ldots,k} \Vert \nabla \varphi_i \Vert_\infty^p$. This along with \eqref{eq: wi} concludes the proof of \eqref{eq: assertionfund}  by setting $w = w_{i_0}$.  To see the scaling property \eqref{eq: constant-scal}, it suffices to use the cut-off functions 
$\varphi^\eps_i\in C_0^\infty((A_{i+1})_{\eps,x_0};[0,1])$ $i=1,\ldots,k$, defined by $\varphi_i^\eps (x) =  \varphi_i( x_0 +  (x-x_0)/\eps) $ for $x \in (A_{i+1})_{\eps,x_0}$. This concludes the proof.   
 \end{proof}

\section{The global method}\label{sec: global method}

This section is devoted to the proof of Theorem \ref{theorem: PR-representation} which is based on three ingredients.  First,  we show that $\mathcal{F}$ is equivalent to $\mathbf{m}_{\mathcal{F}}$ (see \eqref{eq: general minimization}) in the sense that the two quantities have the same  Radon-Nikodym  derivative with respect to $\mu:= \mathcal{L}^d\lfloor_{\Omega} + \mathcal{H}^{d-1}\lfloor_{J_u \cap \Omega}$.

\begin{lemma}\label{lemma: G=m}
Suppose that $\mathcal{F}$ satisfies {\rm (${\rm H_1}$)}--{\rm (${\rm H_4}$)}. Let $u \in GSBD^p(\Omega)$ and   $\mu = \mathcal{L}^d\lfloor_{\Omega} + \mathcal{H}^{d-1}\lfloor_{J_u \cap \Omega}$. Then for $\mu$-a.e.\ $x_0 \in \Omega$ we have
 $$\lim_{\eps \to 0}\frac{\mathcal{F}(u,B_\eps(x_0))}{\mu(B_\eps(x_0))} =  \lim_{\eps \to 0}\frac{\mathbf{m}_{\mathcal{F}}(u,B_\eps(x_0))}{\mu(B_\eps(x_0))}.$$
\end{lemma}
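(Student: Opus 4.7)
Since $u$ is itself admissible in the definition~\eqref{eq: general minimization}, we have the trivial bound $\mathbf{m}_\mathcal{F}(u,B)\leq\mathcal{F}(u,B)$. Because (H$_4$) yields $\mathcal{F}(u,\cdot)\ll\mu$ (both $\mathcal{L}^d$ and $|e(u)|^p\mathcal{L}^d$ being absolutely continuous with respect to $\mu$), Besicovitch differentiation ensures that for $\mu$-a.e.\ $x_0$ the limit $\lim_{\eps\to 0}\mathcal{F}(u,B_\eps(x_0))/\mu(B_\eps(x_0))$ exists and equals the Radon--Nikodym density $\frac{\mathrm{d}\mathcal{F}(u,\cdot)}{\mathrm{d}\mu}(x_0)$. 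One direction of the lemma is thus automatic, and the task reduces to proving that for $\mu$-a.e.\ $x_0$,
\[ \liminf_{\eps\to 0}\frac{\mathcal{F}(u,B_\eps(x_0))-\mathbf{m}_\mathcal{F}(u,B_\eps(x_0))}{\mu(B_\eps(x_0))}\leq 0. \]

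The plan is to argue by contradiction: suppose there exist $\eta>0$ and a Borel set $E\subset\Omega$ with $\mu(E)>0$ such that this liminf exceeds $\eta$ at every $x_0\in E$, and fix a relatively compact open neighbourhood $E\subset A\subset\subset\Omega$. For each $\delta>0$, the Vitali--Besicovitch covering theorem applied to $\mu$ produces a countable disjoint family of closed balls $\{B_i^\delta\}_i$ with $B_i^\delta\subset A$, $\mathrm{diam}(B_i^\delta)\leq\delta$ and $\mu(A\setminus\bigcup_i B_i^\delta)=0$, in which a distinguished subfamily with centres in $E$ covers $\mu$-a.e.\ of $E$ and satisfies $\mathcal{F}(u,B_i^\delta)-\mathbf{m}_\mathcal{F}(u,B_i^\delta)>\eta\mu(B_i^\delta)$. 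For each $i$ I will pick a near-minimizer $v_i^\delta\in GSBD^p(\Omega)$ with $v_i^\delta=u$ in a neighbourhood of $\partial B_i^\delta$ and $\mathcal{F}(v_i^\delta,B_i^\delta)\leq\mathbf{m}_\mathcal{F}(u,B_i^\delta)+2^{-i}\delta$, and glue them into $v^\delta:=v_i^\delta$ on $B_i^\delta$, $v^\delta:=u$ on $A\setminus\bigcup_i B_i^\delta$, which defines a function in $GSBD^p(A)$.

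The central technical step is to show that $v^\delta\to u$ in measure on $A$ as $\delta\to 0$. The difference $w_i^\delta:=v_i^\delta-u\in GSBD^p(B_i^\delta)$ has vanishing trace on $\partial B_i^\delta$, so Corollary~\ref{cor: kornSBDsmall} in its rescaled form (cf.\ Remark~\ref{rem: Korn-scaling}(iii)) provides a set $\omega_i^\delta\subset B_i^\delta$ with
\[ \mathcal{L}^d(\omega_i^\delta)\leq C\bigl(\mathcal{H}^{d-1}(J_{w_i^\delta})\bigr)^{d/(d-1)},\qquad \Vert w_i^\delta\Vert_{L^p(B_i^\delta\setminus\omega_i^\delta)}\leq C\delta\Vert e(w_i^\delta)\Vert_{L^p(B_i^\delta)}. \]
Writing $N_i:=\mathcal{H}^{d-1}(J_{w_i^\delta})\leq\mathcal{H}^{d-1}(J_u\cap B_i^\delta)+\mathcal{H}^{d-1}(J_{v_i^\delta}\cap B_i^\delta)$ and using (H$_4$) together with the near-minimality of $v_i^\delta$, one estimates $N_i\leq C\nu(B_i^\delta)+2^{-i}\delta$, where $\nu:=\mathcal{L}^d\lfloor_\Omega+|e(u)|^p\mathcal{L}^d\lfloor_\Omega+\mathcal{H}^{d-1}\lfloor_{J_u}$ is a finite atomless Radon measure. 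In particular $\sum_i N_i$ stays bounded uniformly in $\delta$; moreover, since $\nu$ is atomless and the map $x\mapsto\nu(\overline{B_\delta(x)})$ is upper semicontinuous, a Dini-type argument on the compact set $\overline{A}$ forces $\sup_i N_i\to 0$. Consequently,
\[ \sum_i\mathcal{L}^d(\omega_i^\delta)\leq C(\sup_i N_i)^{1/(d-1)}\sum_i N_i\to 0,\qquad \sum_i\Vert w_i^\delta\Vert_{L^p(B_i^\delta\setminus\omega_i^\delta)}^p\leq C\delta^p\sum_i\Vert e(w_i^\delta)\Vert_{L^p(B_i^\delta)}^p\to 0, \]
the last sum being bounded via (H$_4$) and the near-optimality, and Chebyshev's inequality then yields the convergence in measure.

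The final step combines (H$_2$), (H$_1$), $\mu(A\setminus\bigcup_i B_i^\delta)=0$, and $\mathcal{F}(u,\cdot)\ll\mu$ to give
\[ \mathcal{F}(u,A)\leq\liminf_{\delta\to 0}\mathcal{F}(v^\delta,A)=\liminf_{\delta\to 0}\sum_i\mathcal{F}(v_i^\delta,B_i^\delta)\leq\liminf_{\delta\to 0}\sum_i\mathbf{m}_\mathcal{F}(u,B_i^\delta)+o(1), \]
while at the same time $\mathcal{F}(u,A)=\sum_i\mathcal{F}(u,B_i^\delta)$ for every $\delta$. Subtracting these two and retaining only the distinguished subfamily inside $E$ produces $\eta\mu(E)\leq o(1)$, contradicting $\mu(E)>0$. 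The hard part will be the convergence in measure step: in contrast to the $SBV$ framework of \cite{BFLM} or the $SBD^p$ framework of \cite{Conti-Focardi-Iurlano:15}, where a Poincar\'e-type inequality delivers $L^p$-closeness on the entire ball, in $GSBD^p$ the Korn inequality of \cite{CCS} only gives $L^p$-control outside exceptional sets $\omega_i^\delta$, and controlling their cumulative Lebesgue measure across a fine Vitali cover requires the simultaneous use of the isoperimetric-type bound, the uniform boundedness of $\sum_i N_i$, and the atomlessness of $\nu$.
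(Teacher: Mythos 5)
Your overall strategy mirrors the paper's: trivial bound, Vitali--Besicovitch fine cover, near-minimizers glued into $v^\delta$, Korn-type estimate (Corollary~\ref{cor: kornSBDsmall} plus Remark~\ref{rem: Korn-scaling}(iii)) to obtain convergence in measure of $v^\delta$ to $u$, and then lower semicontinuity to reach a contradiction. The paper organizes this into an intermediate equality $\mathcal{F}(u,A)=\mathbf{m}_\mathcal{F}^*(u,A)$ (Lemma~\ref{lemma: G=m-2}) followed by the covering argument of \cite{Conti-Focardi-Iurlano:15}, whereas you fold the two steps into a single contradiction argument, which is a legitimate alternative presentation. Where you genuinely diverge is in the crucial bound on $\sum_i\mathcal{L}^d(\omega_i^\delta)$: the paper factors $\mathcal{L}^d(\omega_i^\delta)=\big(\mathcal{L}^d(\omega_i^\delta)\big)^{1/d}\big(\mathcal{L}^d(\omega_i^\delta)\big)^{(d-1)/d}$, bounds the first factor by $\gamma_d^{1/d}\delta$ from $\omega_i^\delta\subset B_i^\delta$, and sums the second factor against the disjoint balls (see \eqref{eq: compa3}), obtaining an explicit $O(\delta)$ decay that requires nothing beyond the disjointness; you instead pull out $\big(\sup_j N_j\big)^{1/(d-1)}$ and invoke atomlessness of $\nu$ plus a Dini-type argument to force $\sup_j N_j\to 0$. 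Both are correct, but the paper's route is shorter and does not need the uniform-smallness-of-$\nu$-on-small-balls input.

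There is one genuine gap in the reduction step. You claim that it suffices to prove
\[
\liminf_{\eps\to 0}\frac{\mathcal{F}(u,B_\eps(x_0))-\mathbf{m}_\mathcal{F}(u,B_\eps(x_0))}{\mu(B_\eps(x_0))}\le 0 \quad\text{for $\mu$-a.e.\ }x_0.
\]
This gives $\liminf=0$ (the quotient is nonnegative), which combined with $\lim_\eps\mathcal{F}(u,B_\eps(x_0))/\mu(B_\eps(x_0))=L$ yields only $\limsup_\eps\mathbf{m}_\mathcal{F}(u,B_\eps(x_0))/\mu(B_\eps(x_0))=L$; it does \emph{not} imply that $\lim_\eps\mathbf{m}_\mathcal{F}(u,B_\eps(x_0))/\mu(B_\eps(x_0))$ exists, which is what the statement asserts and what the later applications (e.g.\ Lemmas~\ref{lemma: minsame} and \ref{lemma: minsame2}, where the existence of this limit is explicitly used) require. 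To conclude one must show instead that the $\limsup$ of the quotient is $\le 0$, i.e.\ $\mu\big(\{x:\limsup_\eps[\,\cdots\,]>\eta\}\big)=0$ for every $\eta>0$. The good news is that your own covering construction already proves the stronger statement: the condition ``there exist arbitrarily small $\eps$ with $\mathcal{F}(u,B_\eps(x))-\mathbf{m}_\mathcal{F}(u,B_\eps(x))>\eta\mu(B_\eps(x))$'' (which is what $\limsup>\eta$ gives) is still a Vitali fine cover, so simply replacing $\liminf$ by $\limsup$ in the contradiction hypothesis closes the gap. This is also how the paper handles it implicitly, since membership in $U^*$ there encodes the $\limsup$ condition rather than the $\liminf$ one. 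A secondary, smaller omission is that the fact that the glued function $v^\delta$ lies in $GSBD^p(A)$ is not automatic from the infinite gluing and should be justified via the compactness theorem \cite[Theorem~1.1]{Crismale1}, as the paper does.
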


We prove this lemma in the final part of this section. 
The second ingredient is that, asymptotically as $\eps \to 0$, the minimization problems $\mathbf{m}_{\mathcal{F}}(u,B_\eps(x_0))$ and $\mathbf{m}_{\mathcal{F}}(\bar{u}^{\rm bulk}_{x_0},B_\eps(x_0))$ coincide for $\mathcal{L}^{d}$-a.e.\ $x_0 \in \Omega$, where we write $\bar{u}^{\rm bulk}_{x_0} :=  \ell_{x_0,u(x_0),\nabla u(x_0)}  $ for brevity, see \eqref{eq: elastic competitor}.

\begin{lemma}\label{lemma: minsame}
Suppose that $\mathcal{F}$ satisfies {\rm (${\rm H_1}$)} and  {\rm (${\rm H_3}$)}--{\rm (${\rm H_4}$)}  and let $u \in GSBD^p(\Omega)$.  Then for $\mathcal{L}^{d}$-a.e.\ $x_0 \in \Omega$  we have
\begin{align}\label{eq: PR-proof1-bulk}
  \lim_{\eps \to 0}\frac{\mathbf{m}_{\mathcal{F}}(u,B_\eps(x_0))}{\gamma_{d}\eps^{d}} =  \limsup_{\eps \to 0}\frac{\mathbf{m}_{\mathcal{F}}(\bar{u}^{\rm bulk}_{x_0},B_\eps(x_0))}{\gamma_{d}\eps^{d}}. 
 \end{align}
\end{lemma}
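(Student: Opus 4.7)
My proof plan is to establish both the upper and the lower asymptotic bound, and then combine them with the existence of the limit on the left (which follows independently from Lemma \ref{lemma: G=m} and Besicovitch differentiation applied to the Borel measure $\mathcal{F}(u,\cdot)$). Throughout, fix $x_0$ in the full $\mathcal{L}^d$-measure set where: (i) the approximate gradient $\nabla u(x_0)$ exists (Lemma \ref{lemma: approx-grad}); (ii) $x_0$ is a Lebesgue point of $|e(u)|^p$; (iii) $x_0 \notin J_u$; (iv) $\eps^{-d}\mathcal{H}^{d-1}(J_u \cap B_\eps(x_0)) \to 0$ (using the singularity of $\mathcal{H}^{d-1}\lfloor_{J_u}$ with respect to $\mathcal{L}^d$). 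Write $\bar u_0 := \bar u^{\rm bulk}_{x_0}$. The key ingredient will be the blow-up approximation from Lemma \ref{lemma: blow up}, obtained by applying Theorem \ref{th: kornSBDsmall} to the rescaled difference $\eps^{-1}(u - \bar u_0)(x_0+\eps\,\cdot)$ on $B_1(0)$: it produces a Sobolev representative $\tilde u_\eps \in W^{1,p}(B_\eps(x_0);\R^d)$ and a set of finite perimeter $\omega_\eps \subset B_\eps(x_0)$ with $\tilde u_\eps = u$ on $B_\eps(x_0)\setminus \omega_\eps$ and, crucially,
\begin{equation*}
\eps^{-d}\mathcal{L}^d(\omega_\eps) + \eps^{-d}\mathcal{H}^{d-1}(\partial^*\omega_\eps) + \eps^{-(d+p)}\|\tilde u_\eps - \bar u_0\|^p_{L^p(B_\eps(x_0))} \xrightarrow[\eps\to 0]{} 0.
\end{equation*}

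For the upper bound, fix $\eta,\delta > 0$ and choose $v_\eps$ with $v_\eps = \bar u_0$ on $B_\eps(x_0) \setminus B_{(1-\delta)\eps}(x_0)$ and $\mathcal{F}(v_\eps, B_\eps(x_0)) \le \mathbf{m}_{\mathcal{F}}(\bar u_0, B_\eps(x_0)) + \eps^{d+1}$. Apply Lemma \ref{lemma: fundamental estimate} with $A' = B_{(1-\delta)\eps}(x_0)$, $A = B_{(1-\delta/2)\eps}(x_0)$, $A'' = B_\eps(x_0) \setminus \overline{B_{(1-2\delta)\eps}(x_0)}$, and data $(v_\eps,\tilde u_\eps)$ to produce $w_\eps$ with $w_\eps = v_\eps$ on $A'$, $w_\eps = \tilde u_\eps$ on $B_\eps(x_0) \setminus B_{(1-\delta/2)\eps}(x_0)$, and
\begin{equation*}
\mathcal{F}(w_\eps, B_\eps(x_0)) \le (1+\eta)\bigl(\mathcal{F}(v_\eps, A) + \mathcal{F}(\tilde u_\eps, A'')\bigr) + M\eps^{-p}\|\bar u_0 - \tilde u_\eps\|^p_{L^p(A\setminus A')} + \eta\gamma_d\eps^d.
\end{equation*}
Here $\mathcal{F}(v_\eps, A) \le \mathbf{m}_{\mathcal{F}}(\bar u_0, B_\eps(x_0)) + \eps^{d+1}$; the thin-shell term is estimated by $\mathcal{F}(\tilde u_\eps, A'') \le C\delta\eps^d + o(\eps^d)$, combining $|A''| = O(\delta\eps^d)$ with the Sobolev regularity of $\tilde u_\eps$ and $\|e(\tilde u_\eps) - e(\bar u_0)\|^p_{L^p(B_\eps(x_0))} = o(\eps^d)$ coming from the Lebesgue-point property of $|e(u)|^p$; and the $L^p$-gluing term is $o(\eps^d)$ because $v_\eps = \bar u_0$ on $A \setminus A'$ and $M = O(\eps^{-p})$. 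In order to convert $w_\eps$ into an admissible competitor for $\mathbf{m}_{\mathcal{F}}(u, B_\eps(x_0))$, modify $w_\eps$ on $\omega_\eps \cap (B_\eps(x_0) \setminus B_{(1-\delta/2)\eps}(x_0))$ so as to equal $u$ there; the extra bulk cost $\beta\int_{\omega_\eps}(1 + |e(u)|^p)\,\dx = o(\eps^d)$ (by absolute continuity of $|e(u)|^p$ at the Lebesgue point $x_0$, together with $\mathcal{L}^d(\omega_\eps) = o(\eps^d)$) and the extra surface cost $\beta\mathcal{H}^{d-1}(\partial^*\omega_\eps) = o(\eps^d)$. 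Dividing by $\gamma_d\eps^d$, taking $\limsup_{\eps \to 0}$, and then letting $\delta,\eta \to 0$ gives $\limsup_\eps \mathbf{m}_{\mathcal{F}}(u,B_\eps(x_0))/(\gamma_d\eps^d) \le \limsup_\eps \mathbf{m}_{\mathcal{F}}(\bar u_0, B_\eps(x_0))/(\gamma_d\eps^d)$.

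For the reverse inequality, choose a near-minimizer $\tilde v_\eps$ for $\mathbf{m}_{\mathcal{F}}(u, B_\eps(x_0))$ with $\tilde v_\eps = u$ on $B_\eps(x_0) \setminus B_{(1-\delta)\eps}(x_0)$. To enable the $L^p$-gluing, first modify $\tilde v_\eps$ so as to equal $\bar u_0$ (rather than $u$) on $\omega_\eps \cap (B_\eps(x_0) \setminus B_{(1-\delta)\eps}(x_0))$; this modification cost is $o(\eps^d)$ by the same scaling bounds on $\omega_\eps$. The crucial observation is that on the resulting near-boundary annulus the modified function $\hat v_\eps$ equals $u = \tilde u_\eps$ outside $\omega_\eps$ and $\bar u_0$ inside $\omega_\eps$, so that $\|\hat v_\eps - \bar u_0\|^p_{L^p(A\setminus A')} \le \|\tilde u_\eps - \bar u_0\|^p_{L^p(B_\eps(x_0))} = o(\eps^{d+p})$. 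Apply Lemma \ref{lemma: fundamental estimate} with the same ball configuration and data $(\hat v_\eps, \bar u_0)$; since $\bar u_0$ is affine, $\mathcal{F}(\bar u_0, A'') = O(\delta\eps^d)$ directly, and the same bookkeeping yields a competitor for $\mathbf{m}_{\mathcal{F}}(\bar u_0, B_\eps(x_0))$ with energy bounded by $(1+\eta)\mathbf{m}_{\mathcal{F}}(u, B_\eps(x_0)) + C\delta\eps^d + o(\eps^d) + \eta\gamma_d\eps^d$. Combining both estimates and letting $\delta,\eta \to 0$, the $\limsup$ and $\liminf$ of the quotient on the left both coincide with $\limsup_{\eps\to 0}\mathbf{m}_{\mathcal{F}}(\bar u_0, B_\eps(x_0))/(\gamma_d\eps^d)$; the existence of the LHS limit can alternatively be obtained from Lemma \ref{lemma: G=m} together with Besicovitch differentiation. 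The principal obstacle throughout is the $L^p$-defect of $u$ on the exceptional set $\omega_\eps$, which cannot be controlled by $L^p$-bounds on $u$ (since $u\in GSBD^p$ need not be locally $L^p$-integrable) and is instead absorbed by the sharp perimeter bound $\mathcal{H}^{d-1}(\partial^*\omega_\eps) = o(\eps^d)$ produced by the Korn-type inequality.
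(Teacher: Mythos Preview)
Your overall strategy matches the paper's: use the Korn-type inequality to produce a Sobolev representative of $u$ close to $\bar u_{x_0}^{\rm bulk}$ in the scaled $L^p$-norm, then transfer boundary data via the fundamental estimate. The paper organizes this slightly differently by first packaging the Korn step into Lemma~\ref{lemma: blow up}, where the Sobolev replacement is performed only on an inner ball $B_{(1-\theta)\eps}(x_0)$ and $u_\eps$ is set equal to $u$ in the outer annulus; hence $u_\eps$ already satisfies the boundary condition of $u$ and no post-hoc modification is needed in Step~1. In Step~2 the paper extends the near-minimizer for $u$ by $u_\eps$ and then glues against $\bar u_{x_0}^{\rm bulk}$.

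There is, however, a genuine gap in your surface bookkeeping. When you modify $w_\eps$ (resp.\ $\tilde v_\eps$) on the set $\omega_\eps \cap (B_\eps(x_0)\setminus B_{(1-\delta/2)\eps}(x_0))$, the new jump set is contained in the essential boundary of that intersection, which includes not only part of $\partial^*\omega_\eps$ but also the spherical slice $\partial B_{(1-\delta/2)\eps}(x_0)\cap \omega_\eps$ (and likewise $\partial B_{(1-\delta)\eps}$ in the reverse inequality). The quantity $\mathcal{H}^{d-1}\big(\partial B_r(x_0)\cap \omega_\eps\big)$ for a \emph{fixed} radius $r$ is \emph{not} controlled by $\mathcal{H}^{d-1}(\partial^*\omega_\eps)$: a set of tiny perimeter can still intersect a prescribed sphere in a set of measure comparable to $\eps^{d-1}$. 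Thus the claim ``extra surface cost $\beta\,\mathcal{H}^{d-1}(\partial^*\omega_\eps)=o(\eps^d)$'' does not cover all the jump created by your modification. The fix is to select the radius by Fubini/coarea: since $\int_{(1-\delta)\eps}^{(1-\delta/2)\eps}\mathcal{H}^{d-1}(\partial B_r(x_0)\cap\omega_\eps)\,dr\le \mathcal{L}^d(\omega_\eps)=o(\eps^{d^2/(d-1)})$, there exists $r_\eps$ in that range with $\mathcal{H}^{d-1}(\partial B_{r_\eps}(x_0)\cap\omega_\eps)=o(\eps^d)$. This is precisely the device the paper employs in Step~2 (see \eqref{1404201943'}), and it is implicitly built into the construction of Lemma~\ref{lemma: blow up} for Step~1. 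A second, minor point: near-minimizers for $\mathbf{m}_{\mathcal{F}}(\cdot,B_\eps)$ only agree with the datum in \emph{some} neighborhood of $\partial B_\eps$, not in a prescribed annulus of width $\delta\eps$; the paper handles this by taking near-minimizers on a slightly smaller ball and extending by the datum, which you should make explicit as well.
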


We   defer the proof of Lemma \ref{lemma: minsame}  to Section \ref{sec:bulk}.  The third ingredient is that, asymptotically as $\eps \to 0$, the minimization problems $\mathbf{m}_{\mathcal{F}}(u,B_\eps(x_0))$ and $\mathbf{m}_{\mathcal{F}}(\bar{u}^{\rm surf}_{x_0},B_\eps(x_0))$ coincide for $\mathcal{H}^{d-1}$-a.e.\ $x_0 \in J_u$, where we write $\bar{u}^{\rm surf}_{x_0} := u_{x_0,u^+(x_0),u^-(x_0),\nu_u(x_0)}$ for brevity, see \eqref{eq: jump competitor}.

\begin{lemma}\label{lemma: minsame2}
Suppose that $\mathcal{F}$ satisfies {\rm (${\rm H_1}$)} and  {\rm (${\rm H_3}$)}--{\rm (${\rm H_4}$)}  and let $u \in GSBD^p(\Omega)$.  Then for $\mathcal{H}^{d-1}$-a.e.\ $x_0 \in J_u$  we have
\begin{align}\label{eq: PR-proof1}
  \lim_{\eps \to 0}\frac{\mathbf{m}_{\mathcal{F}}(u,B_\eps(x_0))}{\gamma_{d-1}\eps^{d-1}} =  \limsup_{\eps \to 0}\frac{\mathbf{m}_{\mathcal{F}}(\bar{u}^{\rm surf}_{x_0},B_\eps(x_0))}{\gamma_{d-1}\eps^{d-1}}. 
  \end{align}
\end{lemma}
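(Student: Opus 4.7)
I will prove both
$$\limsup_{\eps\to 0}\tfrac{\mathbf{m}_\mathcal{F}(u,B_\eps(x_0))}{\gamma_{d-1}\eps^{d-1}} \le L \le \liminf_{\eps\to 0}\tfrac{\mathbf{m}_\mathcal{F}(u,B_\eps(x_0))}{\gamma_{d-1}\eps^{d-1}},\qquad L:=\limsup_{\eps\to 0}\tfrac{\mathbf{m}_\mathcal{F}(\bar{u}^{\rm surf}_{x_0},B_\eps(x_0))}{\gamma_{d-1}\eps^{d-1}},$$
whence the limit in \eqref{eq: PR-proof1} exists and equals $L$. Both inequalities follow the same scheme: take an almost-optimal competitor of the right-hand-side problem on a slightly smaller ball, extend it by the prescribed two-sided boundary datum, and glue it via the fundamental estimate (Lemma \ref{lemma: fundamental estimate}) to an intermediate function that carries the other boundary datum on a thin annular layer near $\partial B_\eps(x_0)$.

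\textbf{Key quantitative input.} The fundamental estimate charges a penalty $M\|\cdot\|^p_{L^p}$ with $M=O(\eps^{-p})$ by \eqref{eq: constant-scal}, while the definition \eqref{0106172148} of a jump point only supplies $\|u-\bar{u}^{\rm surf}_{x_0}\|^p_{L^p(B_\eps(x_0))} = o(\eps^d)$. The required intermediate approximation is delivered by Lemma \ref{le:blowupJumpPoints}: for $\mathcal{H}^{d-1}$-a.e.\ $x_0\in J_u$ there exist $\bar{u}_\eps\in GSBD^p(B_\eps(x_0))$ such that $\bar{u}_\eps = u$ near $\partial B_\eps(x_0)$, $\mathcal{F}(\bar{u}_\eps, B_\eps(x_0))\le \mathcal{F}(u, B_\eps(x_0)) + o(\eps^{d-1})$, and $\eps^{-p}\|\bar{u}_\eps - \bar{u}^{\rm surf}_{x_0}\|^p_{L^p(B_\eps(x_0))} = o(\eps^{d-1})$. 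Its construction rests on Theorem \ref{th: kornSBDsmall} applied to each of the two half-balls $B_\eps(x_0)\cap\{\pm(x-x_0)\cdot\nu_u(x_0)>0\}$ (with the scaling of Remark \ref{rem: Korn-scaling}(iii)), the resulting Korn affine approximants being identified with the constant traces $u^\pm(x_0)$ via \eqref{0106172148}, and the exceptional set of \eqref{eq: R2main} being absorbed into the new jump set at $\mathcal{H}^{d-1}$-cost $o(\eps^{d-1})$.

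\textbf{Gluing.} For the $(\le)$ direction, fix $\eta>0$ and choose $v_\eps$ with $v_\eps=\bar{u}^{\rm surf}_{x_0}$ near $\partial B_{(1-3\eta)\eps}(x_0)$ and $\mathcal{F}(v_\eps, B_{(1-3\eta)\eps}(x_0))\le \mathbf{m}_\mathcal{F}(\bar{u}^{\rm surf}_{x_0}, B_{(1-3\eta)\eps}(x_0))+\eta\eps^{d-1}$, extended by $\bar{u}^{\rm surf}_{x_0}$ to $\R^d$. Apply Lemma \ref{lemma: fundamental estimate} with $A' = B_\eps(x_0)\setminus\overline{B_{(1-2\eta)\eps}(x_0)}$, $A = B_\eps(x_0)\setminus\overline{B_{(1-3\eta)\eps}(x_0)}$, $A''= B_\eps(x_0)$, using $\bar{u}_\eps$ in the role of ``$u$'' (on $A$) and $v_\eps$ in the role of ``$v$'' (on $A''$). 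The resulting $w$ equals $\bar{u}_\eps$ on $A'$, hence coincides with $u$ near $\partial B_\eps(x_0)$ and is admissible for $\mathbf{m}_\mathcal{F}(u, B_\eps(x_0))$. Its energy obeys
$$\mathcal{F}(w, B_\eps(x_0))\le (1+\eta)\bigl(\mathcal{F}(\bar{u}_\eps, A) + \mathcal{F}(v_\eps, B_\eps(x_0))\bigr) + M\|\bar{u}_\eps - v_\eps\|^p_{L^p(A\setminus A')} + \eta\gamma_d\eps^d.$$
The annular term $\mathcal{F}(\bar{u}_\eps, A)$ is $O(\eta\eps^{d-1})+o(\eps^{d-1})$ by (H$_4$), the interior Sobolev structure of $\bar{u}_\eps$, and the density of $\mathcal{H}^{d-1}\lfloor_{J_u}$ at $x_0$; the term $\mathcal{F}(v_\eps, B_\eps(x_0))$ splits (since $v_\eps\equiv\bar{u}^{\rm surf}_{x_0}$ outside $B_{(1-3\eta)\eps}(x_0)$) as $\mathcal{F}(v_\eps, B_{(1-3\eta)\eps}(x_0))+\mathcal{F}(\bar{u}^{\rm surf}_{x_0}, B_\eps(x_0)\setminus B_{(1-3\eta)\eps}(x_0)) \le \mathbf{m}_\mathcal{F}(\bar{u}^{\rm surf}_{x_0}, B_{(1-3\eta)\eps}(x_0))+O(\eta\eps^{d-1})$; and since $v_\eps\equiv\bar{u}^{\rm surf}_{x_0}$ on $A\setminus A'$, the penalty reduces to $M\|\bar{u}_\eps - \bar{u}^{\rm surf}_{x_0}\|^p_{L^p(A\setminus A')}\le M\|\bar{u}_\eps - \bar{u}^{\rm surf}_{x_0}\|^p_{L^p(B_\eps(x_0))} = o(\eps^{d-1})$ by the key estimate. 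Dividing by $\gamma_{d-1}\eps^{d-1}$, passing $\eps\to 0$ and then $\eta\to 0$, yields $(\le)$. The $(\ge)$ direction is obtained by exchanging the roles of $u$ and $\bar{u}^{\rm surf}_{x_0}$: take a near-minimizer $\tilde v_\eps$ for $\mathbf{m}_\mathcal{F}(u, B_{(1-3\eta)\eps}(x_0))$ and glue, via the same configuration, with $\bar{u}^{\rm surf}_{x_0}$ on the outer annulus, producing a competitor for $\mathbf{m}_\mathcal{F}(\bar{u}^{\rm surf}_{x_0}, B_\eps(x_0))$.

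\textbf{Main obstacle.} The substantive difficulty is entirely concentrated in Lemma \ref{le:blowupJumpPoints}. The required $L^p$-rate $o(\eps^{d-1+p})$ is strictly stronger than the naive $o(\eps^d)$ coming from convergence in measure, and forcing $\bar{u}_\eps$ to match $u$ on $\partial B_\eps(x_0)$ without creating extra interior jump outside the flat interface $\{(x-x_0)\cdot\nu_u(x_0)=0\}$ is delicate; as flagged in Remark \ref{rem: cons}, the authors' construction here departs from those in \cite{BFLM, Conti-Focardi-Iurlano:15} precisely to close a possible gap in the boundary matching. Once Lemma \ref{le:blowupJumpPoints} is in place, Lemma \ref{lemma: minsame2} reduces to the bookkeeping above.
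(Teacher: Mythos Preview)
Your approach is essentially the paper's, and you correctly identify Lemma~\ref{le:blowupJumpPoints} as the crux. Two points need fixing.

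First, your annular choice $A' = B_\eps(x_0)\setminus\overline{B_{(1-2\eta)\eps}(x_0)}$, $A = B_\eps(x_0)\setminus\overline{B_{(1-3\eta)\eps}(x_0)}$ does \emph{not} satisfy $A'\subset\subset A$ (both touch $\partial B_\eps(x_0)$), so Lemma~\ref{lemma: fundamental estimate} is not applicable as stated. The paper sidesteps this by taking $A' = B_{(1-2\theta)\eps}(x_0)$, $A = B_{(1-\theta)\eps}(x_0)$ as inner balls and $A'' = B_\eps(x_0)\setminus\overline{B_{(1-4\theta)\eps}(x_0)}$ as the outer annulus, placing the near-minimizer in the role of ``$u$'' and the blow-up family $u_\eps$ (your $\bar u_\eps$) in the role of ``$v$''. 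This is the same gluing, with the roles swapped. Relatedly, your claim $\eps^{-p}\|\bar u_\eps-\bar u^{\rm surf}_{x_0}\|^p_{L^p(B_\eps(x_0))}=o(\eps^{d-1})$ overstates Lemma~\ref{le:blowupJumpPoints}(ii), which holds only on $B_{(1-\theta)\eps}(x_0)$; the estimate is nevertheless enough because the transition layer $(A\setminus A')\cap A''$ sits inside $B_{(1-\theta)\eps}(x_0)$.

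Second, the $(\ge)$ direction is not the symmetric ``swap'' you describe. The near-minimizer $\tilde v_\eps$ for $\mathbf{m}_\mathcal{F}(u,\cdot)$ equals $u$ near its boundary sphere, and gluing this directly to $\bar u^{\rm surf}_{x_0}$ would incur the penalty $M\|u-\bar u^{\rm surf}_{x_0}\|^p_{L^p}$, which is \emph{not} $o(\eps^{d-1})$ (convergence in measure gives no $L^p$-rate). The paper's remedy is to extend $\tilde v_\eps$ outside a radius $s_\eps$ by the blow-up function $u_\eps$, not by $u$; this creates a jump along $\partial B_{s_\eps}(x_0)$, but by Fubini one can choose $s_\eps\in((1-4\theta)\eps,(1-3\theta)\eps)$ so that $\eps^{-(d-1)}\mathcal{H}^{d-1}(\{u\neq u_\eps\}\cap\partial B_{s_\eps}(x_0))\to 0$ (using Lemma~\ref{le:blowupJumpPoints}(i)). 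After this extension the fundamental-estimate penalty becomes $M\|u_\eps-\bar u^{\rm surf}_{x_0}\|^p_{L^p}=o(\eps^{d-1})$, and the argument closes as you indicate.
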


We   defer the proof of Lemma \ref{lemma: minsame2} to Section \ref{sec:surf},   and now proceed  to   prove Theorem \ref{theorem: PR-representation}.

\begin{proof}[Proof of Theorem \ref{theorem: PR-representation}]
\BBB In view of the assumption (H$_4$) on $\mathcal{F}$ and of the Besicovitch derivation theorem (cf.\ \cite[Theorem~2.22]{Ambrosio-Fusco-Pallara:2000}), \EEE we need to show that for $\mathcal{L}^{d}$-a.e.\ $x_0 \in \Omega$ one has 
\begin{align}\label{eq: to show1}
\frac{\mathrm{d}\mathcal{F}(u,\cdot)}{\mathrm{d}\mathcal{L}^{d}}(x_0) = f\big(x_0,u(x_0),\nabla u(x_0)\big),
\end{align}
 where $f$ was defined in \eqref{eq:fdef}, and that for $\mathcal{H}^{d-1}$-a.e.\ $x_0 \in J_u$ one has 
\begin{align}\label{eq: to show2}
\frac{\mathrm{d}\mathcal{F}(u,\cdot)}{\mathrm{d}\mathcal{H}^{d-1}\lfloor_{J_u}}(x_0) =  g\big(x_0,u^+(x_0),u^-(x_0),\nu_u(x_0)\big),
\end{align}
 where $g$ was defined in \eqref{eq:gdef}.

   By Lemma \ref{lemma: G=m} and the fact that  $\lim_{\eps \to 0} (\gamma_{d}\eps^{d})^{-1}\mu(B_\eps(x_0))=1$ for $\mathcal{L}^{d}$-a.e.\ $x_0 \in \Omega$  we deduce 
 $$\frac{\mathrm{d}\mathcal{F}(u,\cdot)}{\mathrm{d}\mathcal{L}^{d}}(x_0)  = \lim_{\eps \to 0}\frac{\mathcal{F}(u,B_\eps(x_0))}{\mu(B_\eps(x_0))} =  \lim_{\eps \to 0}\frac{\mathbf{m}_{\mathcal{F}}(u,B_\eps(x_0))}{\mu(B_\eps(x_0))}  = \lim_{\eps \to 0}\frac{\mathbf{m}_{\mathcal{F}}(u,B_\eps(x_0))}{\gamma_{d}\eps^{d}} < \infty$$
 for $\mathcal{L}^{d}$-a.e.\ $x_0 \in \Omega$. Then, \eqref{eq: to show1} follows from \eqref{eq:fdef} and Lemma \ref{lemma: minsame}.    By Lemma \ref{lemma: G=m} and the fact that  $\lim_{\eps \to 0} (\gamma_{d-1}\eps^{d-1})^{-1}\mu(B_\eps(x_0))=1$ for $\mathcal{H}^{d-1}$-a.e.\ $x_0 \in J_u$  we deduce 
 $$\frac{\mathrm{d}\mathcal{F}(u,\cdot)}{\mathrm{d}\mathcal{H}^{d-1}\lfloor_{J_u}}(x_0)  = \lim_{\eps \to 0}\frac{\mathcal{F}(u,B_\eps(x_0))}{\mu(B_\eps(x_0))} =  \lim_{\eps \to 0}\frac{\mathbf{m}_{\mathcal{F}}(u,B_\eps(x_0))}{\mu(B_\eps(x_0))}  = \lim_{\eps \to 0}\frac{\mathbf{m}_{\mathcal{F}}(u,B_\eps(x_0))}{\gamma_{d-1}\eps^{d-1}} < \infty$$
 for $\mathcal{H}^{d-1}$-a.e.\ $x_0 \in J_u$. Now, \eqref{eq: to show2} follows   from \eqref{eq:gdef} and  Lemma \ref{lemma: minsame2}.
 \end{proof}

In the remaining part of the section we prove Lemma~\ref{lemma: G=m}. We  basically follow the lines  of \cite{BFLM, BFM, Conti-Focardi-Iurlano:15}, with the difference that the required compactness results are more delicate due to the weaker growth condition from below (see {\rm (${\rm H_4}$)}) compared to   \cite{BFLM, BFM, Conti-Focardi-Iurlano:15}. We start with some notation. For $\delta>0$ and $A \in \mathcal{A}(\Omega)$,  we define 
\begin{align}\label{eq: delta-m}
\mathbf{m}^\delta_{\mathcal{F}}(u,A) = \inf\Big\{ \sum\nolimits_{i=1}^\infty \mathbf{m}_{\mathcal{F}}(u,B_i)\colon & \ B_i \subset A   \text{ pairwise disjoint balls}, \,  \diam(B_i) \le \delta,\notag\\ 
 & \quad   \quad  \quad  \quad  \quad \quad  \quad  \quad \quad \quad \quad \quad \mu\Big(  A \setminus \bigcup\nolimits_{i=1}^\infty B_i\Big) = 0\Big\} ,
\end{align}
where, as before, $\mu= \mathcal{L}^d\lfloor_{\Omega} + \mathcal{H}^{d-1}\lfloor_{J_u \cap \Omega}$. As $\mathbf{m}^\delta_{\mathcal{F}}(u,A) $ is decreasing in $\delta$, we can also introduce
\begin{align}\label{eq: m*}
\mathbf{m}^*_{\mathcal{F}}(u,A) =   \lim_{\delta \to 0 }  \mathbf{m}^\delta_{\mathcal{F}}(u,A).
\end{align}

\BBB  In the following lemma, we prove that $\mathcal{F}$ and $\mathbf{m}^*_{\mathcal{F}}$ coincide under our assumptions.   \EEE

\begin{lemma}\label{lemma: G=m-2}
Suppose that $\mathcal{F}$ satisfies {\rm (${\rm H_1}$)}--{\rm (${\rm H_4}$)} and  let  $u \in GSBD^p(\Omega)$. Then, for all $A \in \mathcal{A}(\Omega)$ there holds $\mathcal{F}(u,A) = \mathbf{m}^*_{\mathcal{F}}(u,A)$. 
\end{lemma}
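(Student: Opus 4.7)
The plan is to establish both inequalities $\mathbf{m}^*_{\mathcal{F}}(u,A) \le \mathcal{F}(u,A)$ and $\mathcal{F}(u,A) \le \mathbf{m}^*_{\mathcal{F}}(u,A)$. For the \emph{upper bound}, observe first that the upper estimate in (H$_4$) implies $\mathcal{F}(u,\cdot) \ll \mu$. A Vitali--Besicovitch covering then produces, for every $\delta > 0$, a pairwise disjoint countable family of balls $\{B_i\} \subset A$ with $\diam(B_i) \le \delta$ and $\mu(A \setminus \bigcup_i B_i)=0$. Using $u$ itself as a competitor in each $\mathbf{m}_{\mathcal{F}}(u,B_i)$ yields $\mathbf{m}_{\mathcal{F}}(u,B_i) \le \mathcal{F}(u,B_i)$; summing and invoking (H$_1$) together with the absolute continuity gives $\mathbf{m}^\delta_{\mathcal{F}}(u,A) \le \mathcal{F}(u,A)$, and passing to the limit $\delta \to 0$ completes this direction.

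For the reverse bound, I would select, for each $\delta > 0$, a near-optimal cover $\{B_i^\delta\}$ in \eqref{eq: delta-m} together with competitors $v_i^\delta \in GSBD^p(\Omega)$ satisfying $v_i^\delta = u$ in a neighborhood of $\partial B_i^\delta$ and $\sum_i \mathcal{F}(v_i^\delta, B_i^\delta) \le \mathbf{m}^\delta_{\mathcal{F}}(u,A) + \delta$. The glued function $v^\delta$, defined as $v_i^\delta$ on each $B_i^\delta$ and as $u$ on $\Omega \setminus \bigcup_i B_i^\delta$, lies in $GSBD^p(\Omega)$ thanks to the boundary matching, and by (H$_1$), (H$_3$), and absolute continuity of $\mathcal{F}(v^\delta,\cdot)$ with respect to $\mu$ one finds $\mathcal{F}(v^\delta, A) \le \sum_i \mathcal{F}(v_i^\delta, B_i^\delta) \le \mathbf{m}^\delta_{\mathcal{F}}(u,A) + \delta$. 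Provided $v^\delta \to u$ in measure on $\Omega$ as $\delta \to 0$, the lower semicontinuity (H$_2$) immediately yields $\mathcal{F}(u,A) \le \liminf_\delta \mathcal{F}(v^\delta, A) \le \mathbf{m}^*_{\mathcal{F}}(u,A)$.

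The \textbf{main obstacle} is precisely this convergence in measure, and here Corollary~\ref{cor: kornSBDsmall} plays the decisive role. On each $B_i^\delta$ the difference $w_i^\delta := v_i^\delta - u$ is in $GSBD^p(B_i^\delta)$ with vanishing trace on $\partial B_i^\delta$, so a scaled application of the corollary (see Remark~\ref{rem: Korn-scaling}(iii)) supplies a set $\omega_i^\delta \subset B_i^\delta$ with
\begin{equation*}
\mathcal{L}^d(\omega_i^\delta) \le C\, S_i^{d/(d-1)} \quad \text{and} \quad \|w_i^\delta\|_{L^p(B_i^\delta \setminus \omega_i^\delta)} \le C\delta\, \|e(w_i^\delta)\|_{L^p(B_i^\delta)},
\end{equation*}
where $S_i := \mathcal{H}^{d-1}(J_{w_i^\delta}) \le \mathcal{H}^{d-1}(J_u \cap B_i^\delta) + \alpha^{-1}\mathcal{F}(v_i^\delta, B_i^\delta)$ and $C$ is independent of $i$ and $\delta$. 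Splitting balls into \emph{good} ones with $S_i \le \delta^{d-1}$ (so that $C S_i^{d/(d-1)} \le C \delta S_i$) and \emph{bad} ones with $S_i > \delta^{d-1}$ (replacing $\omega_i^\delta$ by $B_i^\delta$ and using $\mathcal{L}^d(B_i^\delta) \le C \delta^d \le C \delta S_i$), we obtain in both cases $\mathcal{L}^d(\omega_i^\delta) \le C \delta S_i$, whence $\sum_i \mathcal{L}^d(\omega_i^\delta) \le C \delta \sum_i S_i = O(\delta)$ by (H$_4$) and the uniform control on $\sum_i \mathcal{F}(v_i^\delta, B_i^\delta)$. Off the exceptional set $\bigcup_i \omega_i^\delta$, one further has $\|v^\delta - u\|_{L^p}^p \le (C\delta)^p \sum_i \|e(w_i^\delta)\|_{L^p(B_i^\delta)}^p = O(\delta^p)$, since $\sum_i \|e(w_i^\delta)\|_{L^p(B_i^\delta)}^p$ is uniformly bounded. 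Combining the two estimates via Chebyshev's inequality delivers the convergence $v^\delta \to u$ in measure, closing the argument.
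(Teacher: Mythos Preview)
Your argument is correct and follows essentially the same route as the paper's proof: both directions are handled identically, and the key step---convergence in measure of $v^\delta$ to $u$ via Corollary~\ref{cor: kornSBDsmall} applied to $v_i^\delta - u$ on each $B_i^\delta$---is the same idea. Two small remarks: first, the claim that the countable gluing $v^\delta$ lies in $GSBD^p(\Omega)$ ``thanks to the boundary matching'' is not automatic for an infinite family; the paper handles this by taking finite partial gluings $v^{\delta,n}$, observing uniform energy bounds, and invoking the $GSBD^p$ closure theorem of \cite{Crismale1} to pass to the limit. Second, your good/bad ball dichotomy for $\sum_i \mathcal{L}^d(\omega_i^\delta)$ is correct but slightly more elaborate than needed---the paper simply uses $\mathcal{L}^d(\omega_i^\delta) = (\mathcal{L}^d(\omega_i^\delta))^{1/d}(\mathcal{L}^d(\omega_i^\delta))^{(d-1)/d} \le C\delta\, S_i$ directly, since $\omega_i^\delta \subset B_i^\delta$ and $\diam(B_i^\delta)\le \delta$ already give the bound on the first factor.
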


\begin{proof}
 We follow the lines of the proof of \cite[Lemma 4.1]{Conti-Focardi-Iurlano:15} focusing on the necessary adaptions due to  the weaker growth condition from below (see {\rm (${\rm H_4}$)}) compared to   \cite{Conti-Focardi-Iurlano:15}.  For each ball $B \subset A$ we have $\mathbf{m}_{\mathcal{F}}(u,B) \le \mathcal{F}(u,B)$ by definition. By {\rm (${\rm H_1}$)}  we  get $\mathbf{m}_{\mathcal{F}}^\delta(u,A) \le \mathcal{F}(u,A)$ for all $\delta>0$. This shows $\mathbf{m}_{\mathcal{F}}^*(u,A) \le \mathcal{F}(u,A)$, cf.\ \eqref{eq: m*}. 

We now address the reverse inequality.  We fix $A\in\mathcal{A}(\Omega)$ and  $\delta >0$. Let $(B^\delta_i)_i$ be balls as in the definition of $\mathbf{m}_{\mathcal{F}}^\delta(u,A)$ such that
\begin{align}\label{eq: to show-flaviana1}
\sum\nolimits_{i=1}^\infty \mathbf{m}_{\mathcal{F}}(u,B^\delta_i) \le \mathbf{m}_{\mathcal{F}}^\delta(u,A) + \delta.
\end{align}
By the definition of $\mathbf{m}_{\mathcal{F}}$, we find $v_i^\delta \in GSBD^p(B_i^\delta)$ such that $v_i^\delta = u$ in a neighborhood of $\partial B_i^\delta$ and 
\begin{align}\label{eq: new-GM2}
\mathcal{F}(v_i^\delta,B_i^\delta) \le \mathbf{m}_{\mathcal{F}}(u,B_i^\delta) + \delta \mathcal{L}^d(B_i^\delta).
\end{align}
We define 
\begin{align}\label{eq: def of vdelta}
 v^{\delta,n}  := \sum\nolimits_{i=1}^n v^\delta_i \chi_{B^\delta_i} +  u \chi_{N_0^{\delta,n}}  \quad \text{for $n\in \N$},\quad \quad \quad v^\delta := \sum\nolimits_{i=1}^\infty v^\delta_i \chi_{B^\delta_i} + u \chi_{N_0^\delta}, 
\end{align}
 where  $N_0^{\delta,n} := \Omega \setminus \bigcup_{i=1}^n B^\delta_i$   and $N_0^\delta := \Omega \setminus \bigcup_{i=1}^\infty B^\delta_i$.  
 By construction, we have that  each $ v^{\delta,n} $ lies in $GSBD^p(\Omega)$ and that 
   $\sup_{n \in \N} (\Vert e( v^{\delta,n}  ) \Vert_{L^p(\Omega)} + \mathcal{H}^{d-1}(J_{ v^{\delta,n} })) <+\infty$  by \eqref{eq: to show-flaviana1}--\eqref{eq: new-GM2} and {\rm (${\rm H_4}$)}. Moreover, $ v^{\delta,n}  \to v^\delta$ pointwise a.e.\ in $\Omega$. Then,  
\cite[Theorem~1.1]{Crismale1}   yields $v^\delta \in GSBD^p(\Omega)$. 
\begin{align}\label{eq: to show-flaviana2}
\mathcal{F}(v^\delta,A) &= \sum\nolimits_{i=1}^\infty\mathcal{F}(v_i^\delta,B_i^\delta)  + \mathcal{F}(u,N_0^\delta \cap A) \le  \sum\nolimits_{i=1}^\infty \big(\mathbf{m}_{\mathcal{F}}(u,B_i^\delta) + \delta \mathcal{L}^d(B_i^\delta)\big) \notag\\
&\le \mathbf{m}_{\mathcal{F}}^\delta(u,A) + \delta(1+\mathcal{L}^d(A)),    
\end{align}
where we also used  the fact that $\mu(N_0^\delta \cap A)  =  \mathcal{F}(u,N_0^\delta \cap A) = 0$ by the definition of $(B^\delta_i)_i$  and (${\rm H_4})$. For later purpose, we also note by (${\rm H_4}$) that this implies  
\begin{align}\label{eq: to show-flaviana2-NNN}
\Vert e(v^\delta) \Vert^p_{L^p(A)}  + \mathcal{H}^{d-1}(J_{v^\delta} \cap A) \le \alpha^{-1} \big(\mathbf{m}_{\mathcal{F}}^\delta(u,A) + \delta(1+\mathcal{L}^d(A)) \big).
\end{align}
 We now  claim that $v^\delta \to u$ in measure on $A$. To this end, we apply  Remark \ref{rem: Korn-scaling}(iii)  and Corollary~\ref{cor: kornSBDsmall}   on each $B^\delta_i$ for the function $u - v_i^\delta$ and we get sets of finite perimeter $\omega^\delta_i \subset B^\delta_i$ such that
\begin{align}\label{eq: newi}
{\rm (i)} & \ \ \big(\mathcal{L}^d(\omega^\delta_i)\big)^{(d-1)/d} \le  C    \mathcal{H}^{d-1}\big( (J_u \cup v^\delta)  \cap B^\delta_i \big),\notag\\
{\rm (ii)} & \ \   \Vert u -v^\delta_i \Vert^p_{L^{p}(B^\delta_i \setminus \omega^\delta_i)}\le  C  \delta^p \big(\Vert e(u) \Vert^p_{L^p(B^\delta_i)} + \Vert e(v^\delta) \Vert^p_{L^p(B^\delta_i)} \big),
\end{align}
 for a constant $C>0$ only depending on $p$.  Here, we used that ${\rm diam}( B_i^\delta  ) \le \delta$ and the fact that $(u - v^\delta)\lfloor_{B^\delta_i} \in GSBD^p(B^\delta_i)$ with trace zero on $\partial B^\delta_i$.  We define $\psi\colon [0,+\infty) \to [0,+\infty) $ by $\psi(t) = \min \lbrace t^p, 1\rbrace$ and observe that $v^\delta \to u$ in measure on $A$     
is equivalent to $\int_A \psi(|u-v^\delta|)\, {\rm d}x \to 0$ as $\delta \to 0$. In view of \eqref{eq: def of vdelta}, we compute
\begin{align}\label{eq: compa1}
\int_A \psi(|u-v^\delta|)\, {\rm d}x &  = \sum\nolimits_{i=1}^\infty \int_{B^\delta_i} \psi(|u-v_i^\delta|)\, {\rm d}x \le \sum\nolimits_{i=1}^\infty \Big( \Vert u -v^\delta_i \Vert^p_{L^{p}(B^\delta_i \setminus \omega^\delta_i)} + \mathcal{L}^d(\omega^\delta_i)    \Big).
\end{align}
By \eqref{eq: newi}(ii) and the fact that the balls $(B^\delta_i)_i$ are pairwise disjoint we get
\begin{align}\label{eq: compa2}
\sum\nolimits_{i=1}^\infty \Vert u -v^\delta_i \Vert^p_{L^{p}(B^\delta_i \setminus \omega^\delta_i)} \le   C \delta^p\big(\Vert e(u) \Vert^p_{L^p(A)} + \Vert e(v^\delta) \Vert^p_{L^p(A)} \big).
\end{align}
As $\omega^\delta_i \subset B^i_\delta$ and ${\rm diam}(B^\delta_i) \le \delta$, we further  get by \eqref{eq: newi}(i)
\begin{align}\label{eq: compa3}
\sum\nolimits_{i=1}^\infty \mathcal{L}^d(\omega^\delta_i) \le \gamma_d^{1/d}\delta\sum\nolimits_{i=1}^\infty \big(\mathcal{L}^d(\omega^\delta_i)\big)^{(d-1)/d}\le    \gamma_d^{1/d}  C  \delta \mathcal{H}^{d-1}\big((J_u\cup J_{v^\delta}) \cap A \big). 
\end{align}
Now, combining \eqref{eq: compa1}--\eqref{eq: compa3} and using \eqref{eq: to show-flaviana2-NNN}, we find $\int_A \psi(|u-v^\delta|)\, {\rm d}x \to 0$ as $\delta \to 0$.  With this, using (${\rm H_2}$),  \eqref{eq: m*},  and  \eqref{eq: to show-flaviana2} we get the required inequality  $\mathbf{m}_{\mathcal{F}}^*(u,A) \ge \mathcal{F}(u,A)$   in the limit as $\delta \to 0$.
This concludes the proof.
\end{proof}

\begin{proof}[Proof of Lemma \ref{lemma: G=m}.] 
\BBB The statement follows by repeating exactly the arguments in   \cite[Proofs of Lemma 4.2 and Lemma 4.3]{Conti-Focardi-Iurlano:15}. We report a sketch for the reader's convenience.

The definition of $\mathbf{m}_{\mathcal{F}}$ gives readily that for every $x_0 \in \Omega$
$$ \limsup_{\eps \to 0}\frac{\mathbf{m}_{\mathcal{F}}(u,B_\eps(x_0))}{\mu(B_\eps(x_0))} \leq \limsup_{\eps \to 0}\frac{\mathcal{F}(u,B_\eps(x_0))}{\mu(B_\eps(x_0))}.$$
The converse inequality follows by proving that, for every $t>0$, the set
\begin{equation*}
E_t:=\bigg\{x \in \Omega \colon \liminf_{\varepsilon\to 0}\frac{\mathcal{F}(u,B_{\eps}(x))- \mathbf{m}_{\mathcal{F}}(u,B_{\eps}(x))}{\mu(B_{\eps}(x))} > t \bigg\}
\end{equation*}
satisfies $\mu(E_t)=0$.  To this end, we fix $t>0$. We introduce  the family of balls (depending on $t$)
\begin{equation*}
 X^\delta:=\big\{B_\varepsilon(x) \colon \varepsilon<\delta, \, \overline{B_\varepsilon(x)} \subset \Omega, \, \mu (\partial B_\varepsilon(x))=0,\, \mathcal{F}(u, B_\varepsilon(x))>\mathbf{m}_\mathcal{F}(u, B_\varepsilon(x)) + t \mu(B_\varepsilon(x))\big\},
\end{equation*}
 and we show that 
\begin{equation*}
U^*:=\bigcap\nolimits_{\delta>0} \{x \in \Omega \colon B_\varepsilon(x) \in X^\delta \text{ for some }\varepsilon>0\}
\end{equation*}
satisfies
\begin{equation}\label{eq:referee}
E_t \subset U^*\,,\qquad\mu(U^*)=0.
\end{equation}
 Then $\mu(E_t)=0$ indeed holds true  and the proof is concluded.

We now confirm \eqref{eq:referee}.  The inclusion $E_t \subset U^*$ follows from the definition of $E_t$   which  permits to find, for any $x \in E_t$, $\varepsilon<\delta$ such that $\mathcal{F}(u, B_\varepsilon(x))>\mathbf{m}_\mathcal{F}(u, B_\varepsilon(x)) + t \mu(B_\varepsilon(x))$.  Note that, due to the  the left continuity of $\varepsilon\mapsto \mathbf{m}_\mathcal{F}(u, B_\varepsilon(x))$ (cf.\ \cite[Lemma~4.2]{Conti-Focardi-Iurlano:15}) one can also ensure the additional property $\mu(\partial B_\varepsilon(x))=0$ by slightly varying $\eps$. 
In order to prove that $\mu(U^*)=0$, one fixes a compact set $K \subset U^*$, two positive numbers $\delta < 	\eta$, and defines
\begin{equation*}
U^\eta:= \bigcup \{B_\varepsilon(x) \colon B_\varepsilon(x)  \in   X^\eta\}\,,\quad Y^\delta:=\big\{B_\varepsilon(x) \colon \varepsilon<\delta,\, \overline{B_\varepsilon(x)} \subset U^\eta \sm K, \ \mu(\partial B_\varepsilon(x))=0\big\}.
\end{equation*}
 By recalling also the definition of $U^*$, we see that  $X^\delta$ and $Y^\delta$ are fine covers of $K$ and $U^\eta \sm K$, respectively. Thus there exists countable many pairwise disjoint $B_i \in X^\delta$, $\hat{B}_j \in Y^\delta$, and a set $N$ with $\mu(N)=0$ such that 
$
U^\eta= \bigcup_{i} B_i \cup \bigcup_j \hat{B}_j \cup N.
$
In view of the assumptions (H$_1$), the definitions of $X^\delta$, $Y^\delta$ (in particular the balls have radii smaller than $\delta$) give that
\begin{equation*}
\mathcal{F}(u, U^\eta) \geq \sum\nolimits_i \mathbf{m}_\mathcal{F}(u, B_i) + \sum\nolimits_j \mathbf{m}_\mathcal{F}(u, \hat{B}_j) + t\, \mu\Big(\bigcup\nolimits_i B_i\Big) \geq \mathbf{m}_\mathcal{F}^\delta(u, U^\eta) + t\, \mu(K).
\end{equation*}
Passing to the limit in $\delta$,  \eqref{eq: m*}  and Lemma~\ref{lemma: G=m-2} imply
\begin{equation*}
\mathcal{F}(u, U^\eta) \geq \mathbf{m}_\mathcal{F}^*(u, U^\eta) + t\, \mu(K) = \mathcal{F}(u, U^\eta) + t\, \mu(K),
\end{equation*}
so that $\mu(K)=0$.  Then  $\mu(U^*)=0$ by the regularity of $\mu$. 
\EEE \end{proof}

 To conclude the proof of Theorem \ref{theorem: PR-representation}, it remains to prove Lemmas \ref{lemma: minsame} and \ref{lemma: minsame2}. This is the subject of the following two sections.

%
%
%
%
%
%
%

\section{The bulk density}\label{sec:bulk}

 This section is devoted to the proof of  Lemma~\ref{lemma: minsame}. We start by analyzing the blow-up at points with approximate  gradient.  The latter exists for  $\mathcal{L}^d$-a.e.\   point in $\Omega$ by Lemma \ref{lemma: approx-grad}.

\begin{lemma}[Blow-up at points with approximate gradient]\label{lemma: blow up}
Let $u \in GSBD^p(\Omega)$. Let $\theta \in (0,1)$. For $\mathcal{L}^{d}$-a.e.\ $x_0 \in \Omega$  there exists a  family  $u_\eps \in GSBD^p(B_\eps(x_0))$ such that
\begin{align}\label{eq: blow up-new} 
{\rm (i)} & \ \ \text{$u_\eps = u$ in a neighborhood of $\partial B_\eps(x_0)$,}   \quad \lim_{\varepsilon\to 0}\varepsilon^{-(d+1)}\Ld(\{u_\varepsilon\neq u\})=0,  \notag \\ 
{\rm (ii)} & \ \   \lim_{\eps \to 0}  \ \eps^{-(d+p)} \int_{B_{(1-\theta)\eps}(x_0)} \big|u_\eps(x) - u(x_0) - \nabla u(x_0)(x-x_0) \big|^p \, \mathrm{d}x = 0,\notag\\
{\rm (iii)} & \ \   \lim_{\eps \to 0} \ \eps^{-d} \int_{B_{\eps}(x_0)} \big| e(u_\eps)(x) - e(u)(x_0)\big|^p \, \mathrm{d}x = 0,\notag\\
{\rm (iv)} & \ \   \lim_{\eps \to 0} \  \eps^{-d}\,\mathcal{H}^{d-1}(J_{u_\eps})= 0.
\end{align}
\end{lemma}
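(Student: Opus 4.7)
The plan is to select $x_0$ in a set of full $\mathcal{L}^d$-measure and replace $u$ locally by a Sobolev function obtained from the Korn inequality applied to $u - \ell_0$, where $\ell_0 := \ell_{x_0,u(x_0),\nabla u(x_0)}$, and then patch this Sobolev function with $u$ across a spherical slice chosen by Fubini. Concretely, I choose $x_0$ so that: (a) $\nabla u(x_0)$ exists in the sense of Lemma~\ref{lemma: approx-grad}; (b) $x_0$ is an $L^p$-Lebesgue point of $e(u)$, so that $\|e(u) - e(u)(x_0)\|_{L^p(B_\eps(x_0))}^p = o(\eps^d)$; and (c) $x_0$ has zero $d$-density for $\mathcal{H}^{d-1}\lfloor_{J_u}$, which holds $\mathcal{L}^d$-a.e.\ by mutual singularity of this finite measure and $\mathcal{L}^d$, giving $\mathcal{H}^{d-1}(J_u \cap B_\eps(x_0)) = o(\eps^d)$. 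Applying Remark~\ref{rem: Korn-scaling}(iii) to $u - \ell_0$ on $B_\eps(x_0)$ produces a set $\omega_\eps$, a rigid motion $a_\eps$, and a Sobolev extension $\tilde v_\eps \in W^{1,p}(B_\eps(x_0);\R^d)$ with $\tilde v_\eps = u - \ell_0$ on $B_\eps(x_0)\setminus \omega_\eps$, together with $\mathcal{L}^d(\omega_\eps) \le C(\mathcal{H}^{d-1}(J_u \cap B_\eps(x_0)))^{d/(d-1)}$, $\mathcal{H}^{d-1}(\partial^*\omega_\eps) \le C\mathcal{H}^{d-1}(J_u \cap B_\eps(x_0))$, $\|\tilde v_\eps - a_\eps\|_{L^p(B_\eps(x_0))} \le C\eps \|e(u) - e(u)(x_0)\|_{L^p(B_\eps(x_0))}$, and $\|e(\tilde v_\eps)\|_{L^p(B_\eps(x_0))} \le C\|e(u) - e(u)(x_0)\|_{L^p(B_\eps(x_0))}$. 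Since $d^2/(d-1) > d+1$ for every $d\ge 2$, properties (b)--(c) already yield $\mathcal{L}^d(\omega_\eps) = o(\eps^{d+1})$.

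The main obstacle is to upgrade the Korn bound on $\tilde v_\eps - a_\eps$ to the $L^p$-smallness $\|a_\eps\|_{L^p(B_\eps(x_0))}^p = o(\eps^{p+d})$, so that $v_\eps := \tilde v_\eps + \ell_0 \in W^{1,p}(B_\eps(x_0);\R^d)$ genuinely approximates $\ell_0$ in $L^p$. This is non-trivial because the approximate gradient condition only supplies convergence in measure. For any $\rho>0$, condition (a) ensures $\mathcal{L}^d(B_\eps(x_0)\setminus E_\eps^\rho) = o_\rho(\eps^d)$, where $E_\eps^\rho := \{x\in B_\eps(x_0) : |u(x)-\ell_0(x)| \le \rho\eps\}$. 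On $E_\eps^\rho \cap (B_\eps(x_0)\setminus\omega_\eps)$, which has measure at least $\tfrac12 \gamma_d \eps^d$ for small $\eps$, the identity $\tilde v_\eps = u - \ell_0$ gives $|a_\eps| \le |u - \ell_0| + |\tilde v_\eps - a_\eps| \le \rho\eps + |\tilde v_\eps - a_\eps|$. Invoking Lemma~\ref{lemma: rigid motion} on this majority-set then yields $\|a_\eps\|_{L^p(B_\eps(x_0))}^p \le C\rho^p \eps^{p+d} + o(\eps^{p+d})$, and sending $\rho\to 0$ after $\limsup_{\eps\to 0}$ proves the claim. With $v_\eps := \tilde v_\eps + \ell_0$ I thus have $v_\eps = u$ on $B_\eps(x_0)\setminus\omega_\eps$, $\|v_\eps - \ell_0\|_{L^p(B_\eps(x_0))}^p = o(\eps^{p+d})$, and $\|e(v_\eps) - e(u)(x_0)\|_{L^p(B_\eps(x_0))}^p = o(\eps^d)$.

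To finish, I glue $v_\eps$ and $u$ across a good sphere. The Fubini estimate $\int_{(1-\theta)\eps}^{\eps} \mathcal{H}^{d-1}(\partial B_r(x_0)\cap\omega_\eps)\,\mathrm{d}r \le \mathcal{L}^d(\omega_\eps) = o(\eps^{d+1})$ lets me pick $r_\eps \in ((1-\theta)\eps,\eps)$ with $\mathcal{H}^{d-1}(\partial B_{r_\eps}(x_0)\cap\omega_\eps) = o(\eps^d)$. Define $u_\eps := v_\eps$ on $B_{r_\eps}(x_0)$ and $u_\eps := u$ on $B_\eps(x_0)\setminus \overline{B_{r_\eps}(x_0)}$. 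Then (i) is immediate because $u_\eps = u$ on the outer annulus and $\{u_\eps \ne u\} \subset B_{r_\eps}(x_0)\cap\{v_\eps\ne u\} \subset \omega_\eps$. Property (ii) follows from $B_{(1-\theta)\eps}(x_0)\subset B_{r_\eps}(x_0)$, where $u_\eps = v_\eps$. Property (iii) decomposes into the inner ball, where $e(u_\eps) = e(v_\eps)$ and the Korn bound on $e(\tilde v_\eps)$ is used, and the outer annulus, where $e(u_\eps) = e(u)$ and the Lebesgue property of $e(u)$ applies; both contributions are $o(\eps^d)$. Finally, $J_{u_\eps}\subset (J_u\cap(B_\eps(x_0)\setminus \overline{B_{r_\eps}(x_0)}))\cup (\partial B_{r_\eps}(x_0)\cap\omega_\eps)$ has $(d-1)$-measure $o(\eps^d)$, giving (iv).
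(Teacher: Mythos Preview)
Your argument is correct and follows essentially the same route as the paper. Both proofs select $x_0$ with the same three blow-up properties, apply the Korn inequality (Theorem~\ref{th: kornSBDsmall}/Remark~\ref{rem: Korn-scaling}) to $u-\ell_0$ to produce the exceptional set $\omega_\eps$ and the Sobolev replacement, and control the rigid motion $a_\eps$ via the approximate-gradient condition together with Lemma~\ref{lemma: rigid motion} on a majority subset, exactly as in \eqref{eq: small pert}. The only differences are cosmetic: the paper applies Korn on $B_{(1-\theta)\eps}(x_0)$ and glues at the fixed radius $(1-\theta)\eps$, bounding the jump across that sphere by $\mathcal{H}^{d-1}(\partial^*\omega_\eps)$, whereas you apply Korn on $B_\eps(x_0)$ and use a Fubini/coarea selection of $r_\eps\in((1-\theta)\eps,\eps)$ to make the sphere contribution small. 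Both gluings yield the same estimates; your Fubini step is a slightly more explicit way of handling what the paper gets from $\mathcal{H}^{d-1}(\partial^*\omega_\eps)\le c\,\mathcal{H}^{d-1}(J_u\cap B_\eps(x_0))$. One minor refinement: to make your inclusion for $J_{u_\eps}$ fully rigorous, also require $\mathcal{H}^{d-1}(J_u\cap\partial B_{r_\eps}(x_0))=0$, which holds for a.e.\ $r$ and is compatible with your Fubini choice.
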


\begin{proof}
Let $x_0\in\Omega$ be such that 
\begin{align}\label{eq: given properties}
{\rm (i)} & \ \  \lim_{\eps \to 0} \ \eps^{-d} \int_{B_{\eps}(x_0)} \big| e(u)(x) - e(u)(x_0)\big|^p \, \mathrm{d}x = 0,\notag\\
{\rm (ii)} & \ \   \lim_{\eps \to 0} \  \eps^{-d}\,\mathcal{H}^{d-1}(J_{u} \cap B_\eps(x_0))= 0,\notag \\
{\rm (iii)} & \  \  \lim_{\eps \to 0} \  \eps^{-d} \mathcal{L}^d\Big(\Big\{x \in B_\eps(x_0) \colon \,  \frac{|u(x) - u(x_0) - \nabla u(x_0)(x-x_0)|}{|x - x_0|}  > \varrho   \Big\} \Big)  = 0 \text{ for all $\varrho >0$}.
\end{align}
These properties hold for $\mathcal{L}^d$-a.e.\ $x_0 \in \Omega$ by   Lemma \ref{lemma: approx-grad} and the facts that $|e(u)|^p \in L^1(\Omega)$ and $J_u$ is 
\BBB countably $\hd$-rectifiable. \EEE
We use again the notation $\bar{u}^{\rm bulk}_{x_0}  = \ell_{x_0,u_0,\nabla u(x_0)} =   u(x_0) + \nabla u(x_0) (\cdot-x_0)$ for brevity,  see \eqref{eq: elastic competitor}.

Fix $\theta>0$.  We apply Theorem \ref{th: kornSBDsmall}  and Remark \ref{rem: Korn-scaling}(i)  for the function $u-\bar{u}_{x_0}^{\rm bulk}$ on the set $B_{(1-\theta)\eps}(x_0)$ to obtain a set of  finite perimeter $\omega_\eps \subset B_{(1-\theta)\eps}(x_0)$,  a function $v_\eps \in W^{1,p}(B_{(1-\theta)\eps}(x_0);  \R^d  )$ with $v_\eps=u-\bar{u}_{x_0}^{\rm bulk}$ in $B_{(1-\theta)\eps}(x_0)\setminus \omega_\eps$,  and an infinitesimal rigid motion  $a_\eps$ such that 
\begin{align}\label{eq: R2main-application}
{\rm (i)} & \ \ \mathcal{H}^{d-1}(\partial^* \omega_\eps) \le c\mathcal{H}^{d-1}(J_u \cap B_\eps(x_0)), \quad \quad \quad \ \ \ \ \mathcal{L}^d(\omega_\eps) \le c(\mathcal{H}^{d-1}(J_u \cap B_\eps(x_0)))^{d/(d-1)},\notag \\
{\rm (ii)} &  \ \ \Vert v_\eps - a_\eps \Vert_{L^{p} (B_{(1-\theta)\eps}(x_0))} \le c\,\eps  \Vert e(u-\bar{u}_{x_0}^{\rm bulk}) \Vert_{L^p(B_\eps(x_0))},
\notag \\
 {\rm (iii)} &  \ \ \Vert e(v_\eps) \Vert_{L^{p} (B_{(1-\theta)\eps}(x_0))} \le c\,  \Vert e(u-\bar{u}_{x_0}^{\rm bulk}) \Vert_{L^p(B_\eps(x_0))},
\end{align}
where $c>0$ depends only on $p$, cf.\ also Remark \ref{rem: Korn-scaling}(iii).  We directly note by \eqref{eq: given properties}(ii) and \eqref{eq: R2main-application}(i) that 
\begin{align}\label{eq: omegapes}
\lim_{\eps \to 0 }\eps^{ -d^2/(d-1)  }  \mathcal{L}^d(\omega_\eps) = 0. 
\end{align}
We define $u_\eps\in GSBD^p(B_\eps(x_0))$ as 
\begin{align}\label{eq: ueps-def}
u_\eps :=  u \chi_{B_\eps(x_0) \setminus B_{(1-\theta)\eps}(x_0)} + (v_\eps+\bar{u}_{x_0}^{\rm bulk})\chi_{B_{(1-\theta)\eps}(x_0)},
\end{align}
and proceed by confirming the properties stated in \eqref{eq: blow up-new}.  Notice that, by construction, $u_\eps=u$ in $B_\eps(x_0) \setminus \omega_\eps$.  First, \eqref{eq: blow up-new}(i) follows directly from the fact that $\omega_\eps \subset B_{(1-\theta)\eps}(x_0)$,   as well as \eqref{eq: omegapes}--\eqref{eq: ueps-def}. Moreover, \eqref{eq: R2main-application}(i) and \eqref{eq: given properties}(ii) imply \eqref{eq: blow up-new}(iv).  As for  \eqref{eq: blow up-new}(iii), we notice that  by  \eqref{eq: R2main-application}(iii)  and  \eqref{eq: given properties}(i) we have
\[
\lim_{\eps \to 0} \ \eps^{-d} \int_{B_{(1-\theta)\eps}(x_0)} | e(v_\eps)(x)|^p \, \mathrm{d}x = 0\,.
\] 
Since, by a direct computation, $e(u_\eps)(x)-e(u)(x_0)=e(v_\eps)(x)$ for $x \in  B_{(1-\theta)\eps}(x_0)$,  see \eqref{eq: ueps-def},  in combination with \eqref{eq: given properties}(i)  we obtain  \eqref{eq: blow up-new}(iii).   It therefore remains to prove \eqref{eq: blow up-new}(ii).

To this end, fix $\varrho>0$ and define  $\hat{\omega}_\eps := \lbrace x\in B_\eps(x_0) \colon \, |u(x) - \bar{u}^{\rm bulk}_{x_0}(x)| > \varrho \eps \rbrace$. In view of \eqref{eq: given properties}(iii) and \eqref{eq: omegapes}, we can choose  $\eps_0>0$ sufficiently small such that for all $0 <\eps \le \eps_0$ we have 
\begin{align}\label{eq: volume estimate} 
\mathcal{L}^d(\omega_\eps \cup \hat{\omega}_\eps ) \le \tfrac{1}{2}  \mathcal{L}^d(B_{(1-\theta)\eps}(x_0)).
\end{align}
By the definition of $\hat{\omega}_\eps$   and the fact that $v_\eps=u-\bar{u}_{x_0}^{\rm bulk}$ in $B_{(1-\theta)\eps}(x_0)\setminus \omega_\eps$,   we have $|v_\eps(x)|\le \varrho \eps$ for all $x \in B_{(1-\theta)\eps}(x_0) \setminus (\omega_\eps \cup \hat{\omega}_\eps)$.  Hence,  \eqref{eq: R2main-application}(ii) and the triangle inequality give
\begin{align*}
\Vert  a_\eps \Vert^p_{L^p(B_{(1-\theta)\eps}(x_0) \setminus (\omega_\eps \cup \hat{\omega}_\eps))} \le C\eps^p  \Vert e(u- \bar{u}_{x_0}^{\rm bulk}) \Vert^p_{L^p(B_\eps(x_0))} +  C \mathcal{L}^d(B_\eps(x_0)) \varrho^p \eps^p,
\end{align*}
where $C>0$ depends only on $p$. By  \eqref{eq: volume estimate} and   Lemma \ref{lemma: rigid motion} we get   
$$\Vert  a_\eps \Vert^p_{L^p(B_{(1-\theta)\eps}(x_0))} \le C\eps^p  \Vert e(u - \bar{u}_{x_0}^{\rm bulk}) \Vert^p_{L^p(B_\eps(x_0))} +  C  \mathcal{L}^d(B_\eps(x_0)) \varrho^p \eps^p.$$
Therefore, by using also \eqref{eq: given properties}(i), we derive
$
\limsup_{\eps \to 0} \eps^{-(d+p)}\Vert  a_\eps \Vert^p_{L^p(B_{(1-\theta)\eps}(x_0))}  \le  C  \gamma_d\varrho^p.
$
As $\varrho>0$ was arbitrary, we get
\begin{align}\label{eq: small pert}
\lim_{\eps \to 0} \eps^{-(d+p)}\int_{B_{(1-\theta)\eps}(x_0)}  |  a_\eps |^p \, {\rm d}x  = 0.
\end{align}
 Now, \eqref{eq: given properties}(i)  and  \eqref{eq: R2main-application}(ii)    give  that
\[
\lim_{\eps \to 0} \eps^{-(d+p)} \Vert v_\eps - a_\eps \Vert^p_{L^{p}(B_{(1-\theta)\eps}(x_0))} \le c\,\eps ^{-d} \Vert e(u-\bar{u}_{x_0}^{\rm bulk}) \Vert^p_{L^p(B_\eps(x_0))}=0\,.
\] 
As $u_\eps-\bar{u}_{x_0}^{\rm bulk}=v_\eps$ in $B_{(1-\theta)\eps}(x_0)$,  this shows \eqref{eq: blow up-new}(ii)  by  \eqref{eq: small pert}.  
\end{proof}

%
%

We are now in a position to prove Lemma \ref{lemma: minsame}.

\begin{proof}[Proof of Lemma \ref{lemma: minsame}]
It suffices  to prove  \eqref{eq: PR-proof1-bulk}  for points $x_0 \in \Omega$ where the statement of Lemma~\ref{lemma: blow up} holds  and we have $\lim_{\eps \to 0} \eps^{-d}\mu(B_\eps(x_0))=  \gamma_{d}$. This holds true for $\mathcal{L}^d$-a.e.\ $x_0 \in \Omega$. Then also $ \lim_{\eps \to 0}\eps^{-d}\mathbf{m}_{\mathcal{F}}(u,B_\eps(x_0))\in \R$ exists,  see Lemma \ref{lemma: G=m}.   As before, we write  $\bar{u}^{\rm bulk}_{x_0} = u(x_0) + \nabla u(x_0)(\cdot -x_0)$ for shorthand. 
\medskip\\
\emph{Step 1 (Inequality ``$\le$'' in \eqref{eq: PR-proof1-bulk}):}  We  fix $\eta>0$ and $\theta>0$.  Choose $z_\eps \in GSBD^p(B_{(1-3 \theta)\eps}(x_0))$ with $z_\eps = \bar{u}^{\rm bulk}_{x_0}$ in a neighborhood of $\partial B_{(1-3\theta)\eps}(x_0)$ and
\begin{align}\label{eq:repr1+}
\mathcal{F}\big(z_\eps,B_{(1-3\theta)\eps}(x_0)\big) \le \mathbf{m}_{\mathcal{F}}\big(\bar{u}^{\rm bulk}_{x_0},B_{(1-3\theta)\eps}(x_0)\big) + \eps^{d+1}.
\end{align}
We extend $z_\eps$ to a function in $GSBD^p(B_\eps(x_0))$ by setting  $z_\eps = \bar{u}^{\rm bulk}_{x_0}$ outside $B_{(1-3\theta)\eps}(x_0)$. Let $(u_\eps)_\eps$ be the  family  given by Lemma \ref{lemma: blow up}.   We  apply Lemma \ref{lemma: fundamental estimate} on $z_\eps$ (in place of $u$) and $u_\eps$ (in place of $v$) for $\eta$ as above  and the sets
\begin{align}\label{eq: definition of sets}
A' = B_{1-2\theta}(x_0), \quad  A = B_{1-\theta}(x_0), \quad  \B = B_1(x_0) \setminus  \overline{B_{1-4\theta}(x_0)}.
\end{align}
By \eqref{eq: assertionfund}--\eqref{eq: constant-scal} there exist  functions  $w_\eps \in GSBD^p(B_\eps(x_0))$ such that $w_\eps = u_\eps$ on $B_\eps(x_0) \setminus B_{(1-\theta)\eps}(x_0)$ and 
\begin{align}\label{eq:repr1++}
\mathcal{F}&(w_\eps, B_\eps(x_0)) \le   (1+\eta)\big(\mathcal{F}(z_\eps,A_{\eps,x_0})  + \mathcal{F}(u_\eps, \B_{\eps,x_0})\big) + \frac{M}{\eps^p} \Vert z_\eps - u_\eps \Vert^p_{L^p((A \setminus A')_{\eps,x_0})} + \mathcal{L}^d(B_\eps(x_0))\eta,
\end{align}
where  $M>0$ depends on $\theta$ and $\eta$, but is independent of $\eps$.  Here and in the following, we use notation \eqref{eq: shift-not}.  
In particular, we have $w_\eps = u_\eps = u$ in a neighborhood of $\partial B_\eps(x_0)$ by \eqref{eq: blow up-new}(i). By  \eqref{eq: blow up-new}(ii),   \eqref{eq: definition of sets}, and the fact that $z_\eps = \bar{u}^{\rm bulk}_{x_0}$ outside $B_{(1-3\theta)\eps}(x_0)$ we find
\begin{align}\label{eq:repr8-new}
\lim_{\eps \to 0} \ \eps^{-(d+p)} \Vert z_\eps - u_\eps \Vert^p_{L^p( (A \setminus A')_{\eps,x_0})} = \lim_{\eps \to 0} \ \eps^{-(d+p)} \Vert u_\eps - \bar{u}^{\rm bulk}_{x_0} \Vert^p_{L^p(B_{(1-\theta)\eps}(x_0))} =0.
\end{align}
This along with \eqref{eq:repr1++} shows that there exists  a sequence $(\rho_\eps)_\eps \subset (0,+\infty)$ with $\rho_\eps \to 0$ such that  
\begin{align}\label{eq:repr1+++}
\mathcal{F}&(w_\eps, B_\eps(x_0)) \le   (1+\eta)\big(\mathcal{F}(z_\eps, A_{\eps,x_0})  + \mathcal{F}(u_\eps,  \B_{\eps,x_0})\big) +  \eps^d\rho_\eps +\gamma_d\eps^d\eta.
\end{align}
On the one hand, by using that  $z_\eps = \bar{u}^{\rm bulk}_{x_0}$ on  $B_{\eps}(x_0) \setminus B_{(1-3\theta)\eps}(x_0) \subset \B_{\eps,x_0} $,  {\rm (${\rm H_1}$)},  {\rm (${\rm H_4}$)},   and \eqref{eq:repr1+} we compute  
\begin{align}\label{eq: rep1}
\limsup_{\eps\to 0}\frac{\mathcal{F}(z_\eps,A_{\eps,x_0})}{\eps^{d}} &\le \limsup_{\eps\to 0} \frac{\mathcal{F}(z_\eps,B_{(1- 3 \theta)\eps}(x_0))}{\eps^{d}}+  \limsup_{\eps\to 0} \frac{\mathcal{F}(\bar{u}^{\rm bulk}_{x_0}, \B_{\eps,x_0})}{\eps^{d}}\notag\\
&  \le  \limsup_{\eps\to 0} \frac{\mathbf{m}_{\mathcal{F}}(\bar{u}^{\rm bulk}_{x_0},B_{(1-3\theta)\eps}(x_0))}{\eps^{d}} + \beta \,  \gamma_d  \left[1-(1-4\theta)^{d}\right] (1+|e( u)(x_0)|^p) \notag \\
&\le  (1-3\theta)^{d}\limsup_{\eps'\to 0} \frac{\mathbf{m}_{\mathcal{F}}(\bar{u}^{\rm bulk}_{x_0},B_{\eps'}(x_0))}{(\eps')^{d}} +\beta \, \gamma_d\left[1-(1-4\theta)^{d}\right] (1+|e(u)(x_0)|^p),
\end{align}
where in the  last  step we substituted $(1-3\theta)\eps$ by $\eps'$. On the other hand, by  {\rm (${\rm H_4}$)} and \eqref{eq: definition of sets} we also find
\begin{align*}
&\mathcal{F}(u_\eps, \B_{\eps,x_0}) \le \beta \int_{\B_{\eps,x_0}} (1+  |e(u_\eps)|^p) + \beta \,\mathcal{H}^{d-1}(J_{u_\eps} \cap \B_{\eps,x_0}) \\
&\le \gamma_d\eps^d \beta\left[1-(1-4\theta)^{d}\right] (1+   2^{p-1}|e(u)(x_0) |^p) + 2^{p-1} \beta\Vert e(u_\eps) - e(u)(x_0) \Vert^p_{L^p(B_\eps( x_0  ))}   + \beta\mathcal{H}^{d-1}(J_{u_\eps}). 
\end{align*}
By \eqref{eq: blow up-new}(iii),(iv) this implies 
\begin{align}\label{eq: rep1XXX}
\limsup_{\eps \to 0} \frac{\mathcal{F}(u_\eps, \B_{\eps,x_0})}{\eps^{d}}  \le \beta  \, \gamma_d  \left[1-(1-4\theta)^{d}\right](1+   2^{p-1}|e(u)(x_0) |^p). 
\end{align}
Recall that  $w_\eps = u$ in a neighborhood of $\partial B_\eps(x_0)$. This along with  \eqref{eq:repr1+++}--\eqref{eq: rep1XXX} and  $\rho_\eps \to 0$ yields
\begin{align*}
 \lim\nolimits_{\eps \to 0}\frac{\mathbf{m}_{\mathcal{F}}(u,B_\eps(x_0))}{\gamma_{d}\eps^{d}} &\le  \limsup\nolimits_{\eps \to 0}\frac{\mathcal{F}(w_\eps, B_\eps(x_0))}{\gamma_{d}\eps^{d}} \notag \\
 & \le (1+\eta) \, (1-3\theta)^{d} \limsup\nolimits_{\eps \to 0}  \frac{\mathbf{m}_{\mathcal{F}}(\bar{u}^{\rm bulk}_{x_0},B_\eps(x_0))}{\gamma_{d}\eps^{d}} \\ & \ \ \  + 2(1+\eta) \beta \left[1-(1-4\theta)^{d}\right] (1+   2^{p-1}|e(u)(x_0) |^p) + \eta.
\end{align*}
Passing to $\eta,\theta \to 0$ we obtain  inequality ``$\le$'' in \eqref{eq: PR-proof1-bulk}.
\medskip\\
\emph{Step 2 (Inequality ``$\ge$'' in \eqref{eq: PR-proof1-bulk}):}  We fix $\eta,\,\theta>0$ and let   $(u_\eps)_\eps$ be again the   family  from Lemma~\ref{lemma: blow up}.    By \eqref{eq: blow up-new}(i) and Fubini's Theorem, for each $\eps >0$  we can find $ s_\eps \in (1-4\theta, 1-3\theta)\eps  $ such that
\begin{align}\label{1404201943'}
{\rm (i)}&  \ \ \lim_{\eps \to 0}\varepsilon^{-d} \, \hd\big(\{u\neq u_{\eps}\} \cap \partial B_{s_\eps}(x_0)\big)= 0,\notag\\
{\rm (ii)} & \ \ \hd\big((J_u \cup J_{u_{\eps}}) \cap  \partial B_{s_\eps}(x_0)  \big)=0 \quad \text{ for all  $\eps >0$}.
\end{align}
 We consider $z_\eps \in GSBD^p(B_{s_\eps}(x_0))$ such that $z_\eps= u$ in a neighborhood of $\partial B_{s_\eps}(x_0)$, and 
\begin{equation}\label{1404201956'}
\mathcal{F}(z_\eps,B_{s_\eps}(x_0)) \le \mathbf{m}_{\mathcal{F}}(u, B_{s_\eps}(x_0)) + \eps^{d+1}.
\end{equation} 
 We extend $z_\eps$ to a function in $GSBD^p(B_{\eps}(x_0))$ by setting 
\begin{equation}\label{1404201958'}
z_\eps=u_{\eps}\quad \text{in }B_{\eps}(x_0) \sm B_{s_\eps}(x_0).
\end{equation}
  We apply Lemma \ref{lemma: fundamental estimate} on $z_\eps$ (in place of $u$) and $\bar{u}_{x_0}^{\rm bulk}$ (in place of $v$) for   the sets indicated in \eqref{eq: definition of sets}. By   \eqref{eq: assertionfund}--\eqref{eq: constant-scal} there exist functions  $w_\eps \in  GSBD^p(B_\eps(x_0))$  such that $w_\eps = \bar{u}^{\rm bulk}_{x_0}$ on $B_\eps(x_0) \setminus B_{(1-\theta)\eps}(x_0)$ and 
$$ \mathcal{F}(w_\eps, B_\eps(x_0)) \le   (1+\eta)\big(\mathcal{F}(z_\eps,A_{\eps,x_0})  + \mathcal{F}(\bar{u}^{\rm bulk}_{x_0}, \B_{\eps,x_0})\big) +   \frac{M}{\eps^p} \Vert z_\eps - \bar{u}^{\rm bulk}_{x_0} \Vert^p_{L^p((A \setminus A')_{\eps,x_0})} + \mathcal{L}^d(B_\eps(x_0))\eta.$$
 By \eqref{1404201958'}  and  the choice of $s_\eps$ we get  that $z_\eps = u_\eps$ outside $B_{(1-3\theta)\eps}(x_0)$.  Thus,  similar to Step 1, cf.\  \eqref{eq:repr8-new} and \eqref{eq:repr1+++}, we find a sequence $(\rho_\eps)_\eps\subset (0,+\infty)$ with $\rho_\eps \to 0$ such that 
\begin{align}\label{eq: rep2-new}
\mathcal{F}&(w_\eps, B_\eps(x_0)) \le   (1+\eta)\big(\mathcal{F}(z_\eps,A_{\eps,x_0})  + \mathcal{F}(\bar{u}^{\rm bulk}_{x_0}, \B_{\eps,x_0})\big) +  \eps^d\rho_\eps +\gamma_d\eps^d\eta.
\end{align}
 Let us now estimate the  terms in \eqref{eq: rep2-new}.  We get by {\rm (${\rm H_1}$)},  {\rm (${\rm H_4}$)},  \eqref{1404201956'}--\eqref{1404201958'}, and the choice of $s_\eps$   that
\begin{equation}\label{eq: rep1-new'''}
\mathcal{F}(z_\eps,A_{\eps,x_0})
\leq \mathbf{m}_{\mathcal{F}}(u, B_{s_\eps}(x_0)) + \eps^{d+1} + \beta \, \hd\big((\{u\neq u_{\eps}\} \cup J_u \cup J_{u_{\eps}}) \cap \partial B_{s_\eps}(x_0)\big) + \mathcal{F}(u_{\eps}, \B_{\eps,x_0}).
\end{equation}
Therefore, by \eqref{eq: rep1XXX}, \eqref{1404201943'}, and the fact that $s_\eps \le (1-3\theta)\eps$  we derive 
\begin{align}\label{eq: rep1-new}
\limsup_{\eps\to 0}\frac{\mathcal{F}(z_\eps, A_{\eps,x_0})}{\eps^{d}}&\le (s_\eps/\eps)^d\limsup_{\eps\to 0} \frac{\mathbf{m}_{\mathcal{F}}(u,B_{s_\eps}(x_0))}{s_\eps^{d}} +  \beta\, \gamma_d\left[1-(1-4\theta)^{d}\right](1+   2^{p-1}|e(u)(x_0) |^p)\notag\\
&\le (1- 3  \theta)^{d}\limsup_{\eps\to 0} \frac{\mathbf{m}_{\mathcal{F}}(u,B_{\eps}(x_0))}{\eps^{d}} +  \beta\, \gamma_d\left[1-(1-4\theta)^{d}\right](1+   2^{p-1}|e(u)(x_0) |^p). 
\end{align}
  Estimating $\mathcal{F}(\bar{u}^{\rm bulk}_{x_0}, \B_{\eps,x_0})$ as in \eqref{eq: rep1}, with \eqref{eq: rep2-new}--\eqref{eq: rep1-new} and     $\rho_\eps \to 0$ we then obtain   
\begin{align*}
 \limsup_{\eps \to 0}\frac{\mathcal{F}(w_\eps, B_\eps(x_0))}{\eps^{d}} & 
  \le (1+\eta) \, (1-3\theta)^{d} \limsup\nolimits_{\eps \to 0}  \frac{\mathbf{m}_{\mathcal{F}}(u,B_\eps(x_0))}{\eps^{d}} \\ & \ \ \  + 2(1+\eta) \beta \, \gamma_d \left[1-(1-4\theta)^{d}\right] (1+   2^{p-1}|e(u)(x_0) |^p) + \gamma_d \eta.
\end{align*}
Passing to $\eta,\theta \to 0$ and recalling that  $w_\eps = \bar{u}^{\rm bulk}_{x_0}$ in a neighborhood of $\partial B_\eps(x_0)$ we derive 
$$\limsup_{\eps \to 0}\frac{\mathbf{m}_{\mathcal{F}}(\bar{u}^{\rm bulk}_{x_0},B_\eps(x_0))}{\gamma_{d}\eps^{d} }\le     \limsup_{\eps \to 0}\frac{\mathcal{F}(w_\eps, B_\eps(x_0))}{\gamma_d\eps^{d}}   \le \limsup_{\eps \to 0}  \frac{\mathbf{m}_{\mathcal{F}}(u,B_\eps(x_0))}{ \gamma_{d}\eps^{d}  } =  \lim_{\eps \to 0}  \frac{\mathbf{m}_{\mathcal{F}}(u,B_\eps(x_0))}{ \gamma_{d}\eps^{d}  }.$$
 This   shows    inequality ``$\ge$'' in \eqref{eq: PR-proof1-bulk} and concludes the proof. 
 \end{proof}

\section{The surface density}\label{sec:surf}

 This section is devoted to the proof of  Lemma~\ref{lemma: minsame2}. We start by analyzing the blow-up at jump points. 
In the following, for any $x_0 \in J_u$ 
we adopt the notation $\bar{u}_{x_0}^{\rm surf}$ for the function $u_{x_0,u^+(x_0),u^-(x_0),\nu_u(x_0)}$, see \eqref{eq: jump competitor},  with $\nu_u(x_0)\in  \mathbb{S}^{d-1}$ and $u^\pm(x_0) \in \Rd$  being  the approximate normal to $J_u$ and  the traces on both sides of $J_u$ at $x_0$, respectively. 
 Recall also notation \eqref{eq: shift-not}. 
\begin{lemma}[Blow-up at jump points]\label{le:blowupJumpPoints} 
Let $u \in GSBD^p(\Omega)$ and $\theta \in (0,1)$. For $\hd$-a.e.\ $x_0 \in J_u$ there exists a family $u_\varepsilon  \in  GSBD^p(B_\varepsilon(x_0))$ such that
\begin{align}\label{eqs:blowupJumpPoints}
{\rm (i)} & \ \ \text{$u_\eps = u$ in a neighborhood of $\partial B_\eps(x_0)$, }\quad \lim_{\varepsilon\to 0}\varepsilon^{-d}\Ld(\{u_\varepsilon\neq u\})=0, \notag \\ 
{\rm (ii)} & \ \   \lim_{\eps \to 0}  \ \eps^{-(d-1+p)} \int_{B_{(1-\theta)\eps}(x_0)} \big|u_\eps(x) - \bar{u}^{\rm surf}_{x_0}\big|^p \, \mathrm{d}x = 0,\notag\\
{\rm (iii)} & \ \  \lim_{\eps \to 0} \   \eps^{-(d-1)}\,\mathcal{H}^{d-1}(J_{u_\eps}\cap   E_{\eps,x_0}  )= \hd(\Pi_0 \cap E) \quad \text{for all Borel sets } E \subset B_1(x_0),\notag\\
{\rm (iv)} & \ \   \lim_{\eps \to 0} \ \eps^{-(d-1)} \int_{B_{\eps}(x_0)} \big| e(u_\eps)(x)\big|^p \, \mathrm{d}x = 0,
\end{align}
where $\Pi_0$ denotes the hyperplane passing  through  $x_0$ with normal $\nu_u(x_0)$.
\end{lemma}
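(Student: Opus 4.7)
My strategy is to adapt Lemma~\ref{lemma: blow up} to the jump-point setting, applying the Korn-type inequality of Theorem~\ref{th: kornSBDsmall} separately on each of the two open half-balls $B^\pm_{(1-\theta)\eps}(x_0) := \{x \in B_{(1-\theta)\eps}(x_0) \colon \pm(x-x_0)\cdot \nu_u(x_0) > 0\}$ into which $\Pi_0$ splits $B_{(1-\theta)\eps}(x_0)$. I would first restrict to the $\hd$-a.e.\ good jump points $x_0 \in J_u$ satisfying, in addition to \eqref{0106172148}, the vanishing $\eps^{-(d-1)}\int_{B_\eps(x_0)}|e(u)|^p\, {\rm d}x \to 0$ and the rectifiability blow-up at $x_0$, which in particular yields $\hd(J_u \cap B^\pm_{(1-\theta)\eps}(x_0)) = o(\eps^{d-1})$ and the setwise convergence needed for property~(iii).

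On each half-ball, applying Theorem~\ref{th: kornSBDsmall} via the rescaling in Remark~\ref{rem: Korn-scaling}(iii) to $u - u^\pm(x_0)$, and exploiting the smallness $\hd(J_u \cap B^\pm_{(1-\theta)\eps}) = o(\eps^{d-1})$, produces sets of finite perimeter $\omega^\pm_\eps \subset B^\pm_{(1-\theta)\eps}(x_0)$ with $\mathcal{L}^d(\omega^\pm_\eps) = o(\eps^d)$ and $\hd(\partial^*\omega^\pm_\eps) = o(\eps^{d-1})$, Sobolev functions $v^\pm_\eps \in W^{1,p}(B^\pm_{(1-\theta)\eps};\R^d)$ that coincide with $u - u^\pm(x_0)$ on $B^\pm_{(1-\theta)\eps}\setminus\omega^\pm_\eps$, and infinitesimal rigid motions $a^\pm_\eps$ for which $\|v^\pm_\eps - a^\pm_\eps\|^p_{L^p(B^\pm_{(1-\theta)\eps})} \le c\eps^p \|e(u)\|^p_{L^p(B^\pm)} = o(\eps^{d-1+p})$. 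Arguing as in Lemma~\ref{lemma: blow up}, the convergence in measure of $u$ to $u^\pm(x_0)$ combined with Lemma~\ref{lemma: rigid motion}---via a diagonal choice of an auxiliary threshold $\varrho \to 0$ controlling the sets $\{|u - u^\pm(x_0)| > \varrho\}$---yields $\|a^\pm_\eps\|^p_{L^p(B^\pm_{(1-\theta)\eps})} = o(\eps^d)$.

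I would then define $u_\eps := u\,\chi_{B_\eps(x_0) \setminus B_{(1-\theta)\eps}(x_0)} + \sum_{\pm}(v^\pm_\eps + u^\pm(x_0))\,\chi_{B^\pm_{(1-\theta)\eps}(x_0)}$. By construction $u_\eps$ agrees with $u$ outside $\omega^+_\eps \cup \omega^-_\eps$, giving~(i); the identity $e(u_\eps) = e(v^\pm_\eps)$ on the half-balls together with the Korn estimate above and the smallness of $\int_{B_\eps} |e(u)|^p$ delivers~(iv); and~(iii) follows since inside $B_{(1-\theta)\eps}$ the jump set of $u_\eps$ is concentrated on $\Pi_0 \cap B_{(1-\theta)\eps}$ up to the extra pieces on $\partial^*\omega^\pm_\eps$ and possibly $\partial B_{(1-\theta)\eps}$, whose total $\hd$-measure is $o(\eps^{d-1})$, while on the annulus $J_{u_\eps} = J_u$, whose rescaled $\hd$-measure converges to $\hd\lfloor_{\Pi_0}$ by rectifiability.

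The hard part is property~(ii). The natural bound $\|u_\eps - \bar{u}^{\rm surf}_{x_0}\|^p_{L^p(B^\pm_{(1-\theta)\eps})} = \|v^\pm_\eps\|^p \le 2^{p-1}(\|v^\pm_\eps - a^\pm_\eps\|^p + \|a^\pm_\eps\|^p)$ leaves a residual contribution of order $o(\eps^d)$ from the rigid motion, which for $p>1$ is strictly coarser than the target $o(\eps^{d-1+p})$; the underlying obstruction is that the convergence in measure of $u$ to $u^\pm(x_0)$ carries no quantitative rate. This is precisely the point at which the construction must slightly differ from \cite{BFLM, Conti-Focardi-Iurlano:15}, as announced in Remark~\ref{rem: cons}: the fix consists in modifying $u_\eps$ on the exceptional sets $\omega^\pm_\eps$ (replacing the Sobolev extension $v^\pm_\eps$ there by an appropriate thresholded version of $u$ itself) so that on the bulk of each half-ball the $L^p$ comparison with $u^\pm(x_0)$ is governed by the Korn error $\|v^\pm_\eps - a^\pm_\eps\|^p$ alone, restoring the scale $o(\eps^{d-1+p})$ without spoiling properties~(i), (iii), and~(iv).
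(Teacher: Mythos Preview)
Your approach has two genuine gaps.

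\textbf{First gap: splitting by $\Pi_0$ does not make $J_u$ small on each side.} You assert that rectifiability yields $\hd(J_u \cap B^\pm_{(1-\theta)\eps}(x_0)) = o(\eps^{d-1})$, but this is false in general. The approximate tangent plane condition only says that $J_u$ lies in an arbitrarily narrow cone around $\Pi_0$; it does not prevent $J_u \cap B_\eps$ from lying entirely on one side of $\Pi_0$ (e.g., take $J_u$ to be the graph of $t \mapsto t^2$ in $\R^2$). In that situation $\hd(J_u \cap B^+_{(1-\theta)\eps}) \sim \eps^{d-1}$, the exceptional set $\omega_\eps^+$ produced by Theorem~\ref{th: kornSBDsmall} may swallow the whole half-ball, and the Korn estimate becomes vacuous. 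The paper avoids this by splitting along a $C^1$ surface $\Gamma$ (graph of a Lipschitz function with vanishing Lipschitz constant) for which $\hd((J_u \triangle \Gamma)\cap B_\eps) = o(\eps^{d-1})$; then $J_u$ is genuinely small in each of the $\Gamma$-sided pieces $B^{\Gamma,\pm}_{s_\eps}$, the Korn inequality is applied there, the resulting Sobolev functions are extended across to all of $B_{s_\eps}$, and only then is $u_\eps$ defined by restriction to the $\Pi_0$-half-balls $B^\pm_{s_\eps}$. This two-step ``apply Korn on the $\Gamma$-side, then transplant to the $\Pi_0$-side'' is precisely the fix alluded to in Remark~\ref{rem: cons}; it has nothing to do with modifying $u_\eps$ on $\omega^\pm_\eps$.

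\textbf{Second gap: the rate for property (ii).} You correctly diagnose that the naive bound on $\Vert a_\eps^\pm\Vert_{L^p}^p$ is only $o(\eps^d)$, but your proposed remedy---thresholding $u$ on $\omega^\pm_\eps$---cannot help: on the bulk $B^\pm\setminus\omega^\pm$ one still has $u_\eps - u^\pm(x_0) = v^\pm_\eps$, and the residual $\Vert a^\pm_\eps\Vert^p_{L^p}$ term is unaffected by what happens on $\omega^\pm_\eps$. The missing ingredient is a dyadic iteration argument (borrowed from \cite{Conti-Focardi-Iurlano:15}): writing $a^\pm_\eps(x) = A^\pm_\eps(x-x_0) + b^\pm_\eps$ and comparing the rigid motions at scales $\eps_k = 2^k \tilde{\eps}$ via Lemma~\ref{lemma: rigid motion}, one obtains $|A^\pm_k - A^\pm_{k+1}| \le c\delta\eps_k^{-1/p}$ and $|b^\pm_k - b^\pm_{k+1}| \le c\delta\eps_k^{(p-1)/p}$ whenever $\eps^{-(d-1)}\int_{B_\eps}|e(u)|^p \le \delta^p$. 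Summing the resulting geometric series yields the sharp rates $\eps|A^\pm_\eps|^p \to 0$ and $\eps^{(1-p)/p}|b^\pm_\eps - u^\pm(x_0)| \to 0$, which are exactly what is needed for $\Vert a^\pm_\eps - u^\pm(x_0)\Vert^p_{L^p(B^\pm_{s_\eps})} = o(\eps^{d-1+p})$.
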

\begin{proof}
We start by using the fact that  $J_u$ is 
\BBB countably $\hd$-rectifiable \EEE
and the blow-up properties of $GSBD^p$ functions. Arguing as in, e.g., \cite[Proof of Theorem~2]{Cha04}, \cite[Proof of Theorem~1.1]{Crismale2}, \cite[Lemma~3.4]{Conti-Focardi-Iurlano:15}, we infer  that for $\hd$-a.e.\ $x_0 \in J_u$ there exist $\ove\varepsilon>0$,  $\nu_u(x_0) \in \mathbb{S}^{d-1}$,  $u^\pm(x_0) \in \Rd$,  and   a hypersurface $\Gamma$ which is a graph of a  function  $h$  defined on $\Pi_0$,  being  $C^1$ and Lipschitz, 
 such that $x_0 \in \Gamma$, $\Pi_0$ is tangent to $\Gamma$ in $x_0$,  $\Gamma\cap B_\varepsilon(x_0)$ separates $B_\varepsilon(x_0)$ in two open connected components $B_\varepsilon^{\Gamma,\pm}(x_0)$ for  each  $\varepsilon< \ove \varepsilon$, and  
\begin{align}\label{1104201859}
{\rm (i)} & \ \ \lim_{\varepsilon\to 0} \varepsilon^{-(d-1)} \hd((J_u \triangle \Gamma) \cap B_\varepsilon(x_0))=0, \notag \\
& \ \   \lim_{\varepsilon\to 0} \varepsilon^{-(d-1)} \hd(\Gamma\cap  E_{\eps,x_0}  )= \hd(\Pi_0 \cap E) \quad \text{for all Borel sets } E \subset B_1(x_0),  \notag \\
{\rm (ii)} & \ \ \lim_{\varepsilon\to 0} \varepsilon^{-(d-1)} \int_{B_\varepsilon(x_0)} |e(u)|^p \dx =0, \notag \\
{\rm (iii)} & \ \ \lim_{\varepsilon\to 0} \varepsilon^{-d}  \Ld\big( \{x \in B_\varepsilon(x_0) \colon |u- \bar{u}^{\rm surf}_{x_0}|> \varrho\}  \big)=0\quad\text{for all }\varrho>0.
\end{align}
In particular,  (ii) follows from the fact that $|e(u)|^p \in L^1(\Omega)$ and (iii) from  \eqref{0106172148}. 
 Then,  since $\Pi_0$ is tangent to $\Gamma$ in $x_0$, $\Gamma \cap B_\varepsilon(x_0)$ is the graph of a Lipschitz function    $h_\eps$  defined on a subset of $\Pi_0$, with Lipschitz constant $L_\varepsilon$ such that $\lim_{\varepsilon\to 0} L_\varepsilon=0$.  Therefore,   it holds that  
 \begin{equation}\label{1304200828}
 \lim_{\varepsilon\to 0}\varepsilon^{-d}\Ld\big(B_\varepsilon^{\Gamma,\pm}(x_0) \triangle B_\varepsilon^\pm(x_0)\big)=0,
 \end{equation}
where $B_\varepsilon^\pm(x_0) :=\{ y \in B_\varepsilon(x_0)\colon \pm  (y-x_0)  \cdot \nu_u(x_0) >0\}$.  By this and Fubini's Theorem, for each $\eps >0$  we can find $ s_\eps \in (1-\theta, 1-\frac{\theta}{2})\eps  $ such that
 \begin{equation}\label{1304200828-new}
 \lim_{\varepsilon\to 0}\varepsilon^{-(d-1)}\Ld\Big( \big(B_\varepsilon^{\Gamma,\pm}(x_0) \triangle B_\varepsilon^\pm(x_0)\big)  \cap \partial B_{s_\eps}(x_0) \Big) =0.
 \end{equation} 
For any $\varepsilon>0$, we apply Theorem~\ref{th: kornSBDsmall}  and Remark \ref{rem: Korn-scaling}(i)  on $u$   in the two connected components $B_{s_\varepsilon}^{\Gamma,\pm}(x_0)$ for $\varepsilon< \ove \varepsilon$. 
This gives
two functions $v_\varepsilon^\pm \in W^{1,p}(B_{s_\varepsilon}^{\Gamma,\pm}(x_0);\Rd)$,  two sets of finite perimeter $\omega^\pm_\eps \subset B_{s_\eps}^{\Gamma,\pm}(x_0)$, and two infinitesimal rigid motions  $a^\pm_\eps$ such that 
\begin{align}\label{1104201900}
{\rm (i)} & \ \ v_\varepsilon^\pm=u \quad\text{in }B_{s_\varepsilon}^{\Gamma,\pm}(x_0) \sm \omega_\varepsilon^\pm,\notag \\
{\rm (ii)} & \ \ \mathcal{H}^{d-1}(\partial^* \omega^\pm_\eps) \le c\mathcal{H}^{d-1}(J_u \cap B_\eps^{\Gamma,\pm}(x_0)), \quad   \ \  \mathcal{L}^d(\omega^\pm_\eps) \le c\big(\mathcal{H}^{d-1}(J_u \cap B_\eps^{\Gamma,\pm}(x_0))\big)^{d/(d-1)},\notag \\
{\rm (iii)} &  \ \ \Vert v_\varepsilon^\pm - a^\pm_\eps \Vert_{L^{p}(B_{s_\eps}^{\Gamma,\pm}(x_0))} \le c\,\eps  \Vert e(u) \Vert_{L^p(B_\eps^{\Gamma,\pm}(x_0))}, \notag \\
{\rm (iv)} &  \ \ \Vert \nabla v_\varepsilon^\pm  - \nabla a^\pm_\eps \Vert_{L^{p}(B_{s_\eps}^{\Gamma,\pm}(x_0))} \le c   \Vert e(u) \Vert_{L^p(B_\eps^{\Gamma,\pm}(x_0))}, 
\end{align}
where $c>0$  is  independent of $\eps$.  (See Remark~\ref{rem: Korn-scaling}(ii),(iii)  and recall that the Lipschitz constant of  $h_\varepsilon$ vanishes as $\varepsilon\to 0$.)  By the Sobolev extension theorem we extend $v_\eps^\pm$ to $\hat{v}^\pm_\eps \in W^{1,p}(B_{s_\eps}(x_0);\R^d)$, and \eqref{1104201900}(iii),(iv) along with the linearity of the extension operator yield
\begin{align}\label{eq: 1104201900-new}
\eps^{-1}\Vert \hat{v}_\varepsilon^\pm - a^\pm_\eps \Vert_{L^{p}(B_{s_\eps}(x_0))} + \Vert \nabla \hat{v}_\varepsilon^\pm  - \nabla a^\pm_\eps \Vert_{L^{p}(B_{s_\eps}(x_0))} \le c   \Vert e(u) \Vert_{L^p(B_\eps^{\Gamma,\pm}(x_0))},
\end{align}
where, as before, the constant is independent of $\eps$. (Here, we used  again  the properties of the functions $h_\varepsilon$ recalled below \eqref{1104201859}.)  We define $u_\varepsilon \in GSBD^p(B_\varepsilon(x_0))$ as
\begin{equation}\label{1104202008}
u_\eps :=  \begin{dcases}
\hat{v}_\varepsilon^+ &\quad\text{in }  B_{s_\varepsilon}^+(x_0),\\
\hat{v}_\varepsilon^- &\quad\text{in } B_{s_\varepsilon}^-(x_0),\\
u  &\quad\text{in } B_\varepsilon(x_0) \sm  B_{s_\varepsilon}(x_0),
\end{dcases}
\end{equation}
where $B_{s_\varepsilon}^\pm(x_0)$ is defined below \eqref{1304200828}. We now   prove the properties in \eqref{eqs:blowupJumpPoints}.  First, by definition  we have that $u_\varepsilon=u$ in a neighborhood of $\partial B_\varepsilon(x_0)$.  By $B_{(1-\theta)\varepsilon}^{\Gamma,\pm}(x_0) \cap \Gamma = \emptyset$, \eqref{1104201859}(i), \eqref{1304200828}, and  \eqref{1104201900}(i),(ii) 
 we obtain $\lim_{\varepsilon\to 0} \varepsilon^{-d} \Ld(\{u_\varepsilon\neq u\})=0$.  This  concludes \eqref{eqs:blowupJumpPoints}(i).  Moreover, \eqref{1104201859}(ii)  and \eqref{eq: 1104201900-new}  imply \eqref{eqs:blowupJumpPoints}(iv). By definition of $u_\varepsilon$ and  \eqref{1104201900}(i) it holds that
 $$J_{u_\eps}  \subset \big(\Pi_0 \cap \overline{B_{s_\eps}(x_0)}\big)\cup \big(J_u \cap (B_\eps(x_0)\sm B_{s_\eps}(x_0)) \big) \EEE \cup   \partial^* \omega_\eps^+ \cup \partial^* \omega_\eps^- \cup \big(\big(B_\varepsilon^{\Gamma,\pm}(x_0) \triangle B_\varepsilon^\pm(x_0)\big)  \cap \partial B_{s_\eps}(x_0) \big).$$  
 We now show \eqref{eqs:blowupJumpPoints}(iii). Concerning the ''$\le$'' inequality, for a fixed  Borel  set   $E \subset B_1(x_0)$
we have to estimate the measure of the intersection with $E_{\eps,x_0}$ and any of the five sets in the right-hand side above: it holds that
$\hd\big(\Pi_0 \cap \overline{B_{s_\eps}(x_0)}\cap E_{\eps,x_0}\big)= \eps^{d-1} \EEE \hd(\Pi_0 \cap \overline{B_{{s_\eps} / \eps}(x_0)} \cap E)$ for any $\eps>0$ by rescaling, that $\lim_{\varepsilon\to 0} |\eps^{-(d-1)} \hd\big(J_u \cap (E_{\eps,x_0}\sm B_{s_\eps}(x_0)) \big)- \hd\big(\Pi_0 \cap (E\sm  B_{{s_\eps} / \eps}(x_0))\big)|=0$ by \eqref{1104201859}(i), while the last three terms are estimated by  \eqref{1104201900}(ii) and \eqref{1304200828-new}. \EEE
 To see the converse, we first apply \cite[Theorem~1.1]{Crismale1}
  to the functions
$u_\eps(x_0+\eps\cdot)$, which converge  in measure  to $ \bar{u}^{\rm surf}_{x_0}$ in $B_1(0)$ by \eqref{1104201859},  \eqref{1304200828},  and \eqref{1104201900}(ii). Then we scale back to $B_\eps(x_0)$. Hence, \eqref{eqs:blowupJumpPoints}(iii) holds.

 It remains to prove  \eqref{eqs:blowupJumpPoints}(ii). We notice that this easily follows from
\begin{equation}\label{1304201020}
\lim_{\varepsilon\to 0} \varepsilon^{-(d-1+p)} \int_{ B_{s_\eps}^{\pm}(x_0)} |a_\varepsilon^\pm- \bar{u}_{x_0}^{\rm surf}|^p \dx =0.
\end{equation}
In fact, \eqref{1104201859}(ii) and  \eqref{eq: 1104201900-new}   give that
\begin{equation}\label{1304201021}
\lim_{\varepsilon\to 0} \varepsilon^{-(d-1+p)} \int_{ B_{s_\eps}^{\pm}(x_0)} |  \hat{v}_\varepsilon^\pm   -a_\varepsilon^\pm|^p \dx =0.
\end{equation}
Then,  \eqref{1304201020}, \eqref{1304201021}, the triangle inequality,  $s_\eps \ge (1-\theta)\eps$,  and  \eqref{1104202008}  imply \eqref{eqs:blowupJumpPoints}(ii).

Therefore,  let us now  confirm \eqref{1304201020}.  We only address the ``+'' case, for  the ``-'' case is  analogous.  We first observe that by \eqref{1104201859}(iii), \eqref{1304200828}, and a diagonal argument, we may find a sequence $(\varrho_\varepsilon)_\varepsilon \subset (0,+\infty)$ with $\lim_{\varepsilon\to 0} \varrho_\varepsilon=0$ such that the sets
\[
\hat{\omega}^+_\varepsilon:=\{x \in B_\varepsilon(x_0) \colon |u(x) -   u^+(x_0)  |> \varrho_\varepsilon\} \cap B_{ s_\eps}^{\Gamma,+}(x_0)
\]
satisfy
\begin{equation}\label{1304200830}
\lim_{\varepsilon\to 0} \varepsilon^{-d}\Ld(\hat{\omega}^+_\varepsilon)=0.
\end{equation}
In view of \eqref{1104201900}(i),(iii), we have that
\begin{equation}\label{1304200933}
\Vert u - a^+_\eps \Vert_{L^{p}(B_{ s_\eps}^{\Gamma,+}(x_0)\sm \omega_\varepsilon^+)} \le c\,\eps  \Vert e(u) \Vert_{L^p(B_{ \eps }^{\Gamma,+}(x_0))}.
\end{equation}
Then, by \eqref{1304200933}, the definition of $\hat{\omega}^+_\varepsilon$, and  the  triangle inequality we get that
\begin{align}\label{1304200934}
\Vert u^+(x_0) - a^+_\eps \Vert_{L^{p}(B_{ s_\eps}^{\Gamma,+}(x_0)\sm(\omega_\varepsilon^+ \cup \hat{\omega}_\varepsilon^+))} \le c\,\eps  \Vert e(u) \Vert_{L^p(B_{ \eps}^{\Gamma,+}(x_0))} +  \gamma_{d}^{\frac{1}{p}} \varepsilon^{\frac{d}{p}} \varrho_\varepsilon. 
\end{align}
  By \eqref{1104201859}(i),  \eqref{1304200828},  \eqref{1104201900}(ii), and \eqref{1304200830}  we obtain  $\mathcal{L}^d(\omega_\eps^+ \cup \hat{\omega}^+_\eps ) \le \tfrac{1}{2}  \mathcal{L}^d(B_{ s_\eps}^{\Gamma,+}(x_0))$  for $\eps$ sufficiently small.    Then, by  Lemma \ref{lemma: rigid motion}  we have that $ \gamma_d \varepsilon^{d/p}   \Vert u^+(x_0) - a^+_\eps \Vert_{L^{\infty}(B_{\eps}(x_0))}$ is less or equal than the right-hand side of \eqref{1304200934},  up to multiplication with a constant.   This along  with \eqref{1104201859}(ii),   $p \ge 1$, and the fact that $\varrho_\eps \to 0$ 
  implies 
\begin{equation}\label{1304201009}
\lim_{\varepsilon\to 0}\Vert u^+(x_0) - a^+_\eps \Vert_{L^{\infty}(B_{\eps}(x_0))} =0.
\end{equation}
Let us consider  $A^+_\varepsilon \in \Mddskew$  and  $b^+_\varepsilon \in \Rd$  such that 
 $a_\varepsilon^+(x)= A^+_\varepsilon (x-x_0)  + b^+_\varepsilon$.  
Then \eqref{1304201020}   follows by
\begin{subequations}\label{eqs:1304201143}
\begin{equation}\label{1304201144}
\lim_{\varepsilon\to 0}  \varepsilon|A^+_\varepsilon|^p=0,
\end{equation}
\begin{equation}\label{1304201145}
\lim_{\varepsilon\to 0} \varepsilon^{\frac{1-p}{p}} |b^+_\varepsilon - u^+(x_0)|=0.
\end{equation}
\end{subequations}
 So we are left to prove  \eqref{eqs:1304201143} which  corresponds to \cite[equations (3.18)-(3.19)]{Conti-Focardi-Iurlano:15}. The proof goes in the same way with slight modifications  that  we  indicate  below. For fixed $\delta>0$ small, by \eqref{1104201859}(ii) there exists $\widehat{\varepsilon}>0$, depending on $\delta$, such that
\begin{equation}\label{1304201655}
 \varepsilon^{-(d-1)} \int_{B_\varepsilon(x_0)} |e(u)|^p \dx  \le  \delta^p \quad\text{for  all  }\varepsilon \leq \widehat{\varepsilon}. 
\end{equation}
For $\tilde{\varepsilon}< \varepsilon < \widehat{\varepsilon}$, we set $\varepsilon_k:=   \min  \lbrace 2^k \tilde{\varepsilon}, \varepsilon \rbrace$ and adopt the notation $k$ in place of $\varepsilon_k$ in the  subscripts. 
We then obtain  
\begin{equation}\label{1304201701}
\begin{split}
\| a_k^+-a_{k+1}^+  \|_{L^\infty(B_{\varepsilon_k}^+(x_0))} &  \leq     c \gamma_d^{-\frac{1}{p}}    
\varepsilon_k^{-\frac{d}{p}}\|a^+_k-a^+_{k+1}\|_{L^p(B_{ s_{\eps_k}}^{\Gamma,+}(x_0)\sm (\omega_k^+\cup \omega_{ k+1}^+))} \leq 
c\,\delta \, \varepsilon_k^{\frac{p-1}{p}}.
\end{split}
\end{equation}
In fact, the first inequality follows from  \eqref{1104201900}(ii) 
and  Lemma \ref{lemma: rigid motion}, and the  second  one from \eqref{1304200933}, \eqref{1304201655},  and the triangle inequality. 
Similarly, employing \eqref{1104201900}(i),(iv) in place of  \eqref{1304200933}, and recalling $\nabla a^+_{\eps_k} = A^+_k$   we obtain
\begin{equation}\label{1304201702}
 | A_k^+-A_{k+1}^+ |   \leq c\,
\delta \, \varepsilon_k^{-\frac{1}{p}}.
\end{equation}
At this stage, \eqref{eqs:1304201143} follow exactly as in \cite{Conti-Focardi-Iurlano:15}: for $\widehat{k}$  being  the first index such that $\varepsilon_{ \widehat{k}} = \varepsilon$,  recalling $\tilde{\eps} = \eps_0$ and  summing \eqref{1304201702} 
gives $\tilde{\varepsilon} |A^+_{\tilde{\varepsilon}}|^p \leq \tilde{\varepsilon} ( |A^+_{\widehat{k}}| + \sum_{k=0}^{\widehat{k}-1} |A^+_k - A^+_{k+1}|)^p \leq c\, \delta^p + c\, \tilde{\varepsilon} |A^+_{\widehat{k}}|^p$. The right-hand side 
vanishes as $\tilde{\varepsilon} \to 0$ and $\delta\to 0$, and this proves \eqref{1304201144}. Moreover, summing \eqref{1304201701} (and since $|b^+_k-b^+_{k+1}| \leq  \| a_k^+-a_{k+1}^+  \|_{L^\infty(B_{\varepsilon_k}^+(x_0))}$)  we  obtain   $ |b^+_{\tilde{\varepsilon}}- b^+_{\varepsilon}|  \leq c \,\delta \,\varepsilon^{\frac{p-1}{p}}$  for all $0 <\tilde{\eps} \le \eps$.  By   passing to the limit as $\tilde{\varepsilon}\to 0$ together with \eqref{1304201009},  we  get $\varepsilon^{\frac{1-p}{p}} |u^+(x_0) - b^+_\varepsilon| \leq c\,\delta$ for $\varepsilon < \widehat{\varepsilon}=\widehat{\varepsilon}(\delta)$.  Thus,   \eqref{1304201145}  follows  by the arbitrariness of $\delta>0$, concluding the proof. 
\end{proof}

\begin{remark}[Construction of $u_\eps$]\label{rem: cons}
{\normalfont
We point out that our definition of $u_\eps$ in \eqref{1104202008} differs from the corresponding constructions in \cite[Lemma 3]{BFLM} and  \cite[Lemma 3.4]{Conti-Focardi-Iurlano:15} in order  to fix a possible flaw contained in these proofs. Roughly speaking, in our notation, in \cite{BFLM, Conti-Focardi-Iurlano:15}, $u_\eps$ on $B_{ s_\varepsilon}(x_0)$ is defined as 
\begin{equation}\label{1104202008-NNN}
u_\eps =  \begin{dcases}
 {v}_\varepsilon^+  &\quad\text{in }  B_{ s_\varepsilon}^{\Gamma,+}(x_0),\\
 {v}_\varepsilon^-  &\quad\text{in } B_{ s_\varepsilon}^{\Gamma,-}(x_0).
\end{dcases}
\end{equation}
Then, instead of \eqref{1304201020} one needs  to check $\lim_{\varepsilon\to 0} \varepsilon^{-(d-1+p)} \int_{B_{s_\eps}^{\Gamma\pm}(x_0)} |a_\varepsilon^\pm- \bar{u}_{x_0}^{\rm surf}|^p \dx =0$. This, however, is in general false if  $\liminf_{\varepsilon\to 0} \varepsilon^{-(d-1+p)}\Ld\big(B_\varepsilon^{\Gamma,\pm}(x_0) \triangle B_\varepsilon^\pm(x_0)\big)>0$ (which is clearly possible). Let us also remark
that, in contrast to our construction, \eqref{1104202008-NNN} allows to prove an estimate of the form 
\begin{align}\label{eq: good asymtotici}  
 \lim_{\eps\to 0}  \eps^{-(d-1)}\mathcal{H}^{d-1}(J_{u_\eps} \setminus  J_u) = 0,
\end{align}
see \cite[Equation (24)]{BFLM} and  \cite[Lemma 3.4(i)]{Conti-Focardi-Iurlano:15}. It is not clear to us if it is possible that for $u_\eps$ satisfying  the fundamental blow-up property \eqref{eqs:blowupJumpPoints}(ii)  one may still have an estimate  of the form \eqref{eq: good asymtotici}. The latter, however, is not needed for our proofs.
}     
\end{remark}

We now proceed  with  the proof of Lemma~\ref{lemma: minsame2}.
\begin{proof}[Proof of Lemma~\ref{lemma: minsame2}]
The proof follows the same strategy of the proof of Lemma~\ref{lemma: minsame}. We fix $x_0 \in J_u$ such that the statement of Lemma~\ref{le:blowupJumpPoints} holds at $x_0$ and $\lim_{\eps \to 0} \eps^{-(d-1)}\mu(B_\eps(x_0))=  \gamma_{d-1}  $. This is possible for $\hd$-a.e.\ $x_0 \in J_u$. Then also $ \lim_{\eps \to 0}\eps^{-(d-1)}\mathbf{m}_{\mathcal{F}}(u,B_\eps(x_0))\in \R$ exists,  see Lemma \ref{lemma: G=m}.    We prove \eqref{eq: PR-proof1} for $x_0$ of this type. 
\medskip\\
\textit{Step~1 (Inequality ``$\leq$'' in \eqref{eq: PR-proof1}):}
We  fix $\eta,\,\theta>0$, and consider $z_\eps \in GSBD^p(B_{(1-3 \theta)\eps}(x_0))$ with $z_\eps = \bar{u}^{\rm surf}_{x_0}$ in a neighborhood of $\partial B_{(1-3\theta)\eps}(x_0)$ and
\begin{align}\label{eq:repr1+'}
\mathcal{F}\big(z_\eps,B_{(1-3\theta)\eps}(x_0)\big) \le \mathbf{m}_{\mathcal{F}}\big(\bar{u}^{\rm surf}_{x_0},B_{(1-3\theta)\eps}(x_0)\big) + \eps^d.
\end{align}
We extend $z_\eps$ to a function in $GSBD^p(B_\eps(x_0))$ by setting  $z_\eps = \bar{u}^{\rm surf}_{x_0}$ outside $B_{(1-3\theta)\eps}(x_0)$. Let $(u_\eps)_\eps$ be the  family  given by Lemma~\ref{le:blowupJumpPoints}.   As in the proof of Lemma~\ref{lemma: minsame}, we  apply Lemma~\ref{lemma: fundamental estimate} on $z_\eps$ (in place of $u$) and $u_\eps$ (in place of $v$) for $\eta$ fixed above  and the sets
\begin{align*}
A' = B_{1-2\theta}(x_0), \quad  A = B_{1-\theta}(x_0), \quad  \B = B_1(x_0) \setminus  \overline{B_{1-4\theta}(x_0)}.
\end{align*}
Recalling  notation \eqref{eq: shift-not} and   \eqref{eq: assertionfund}--\eqref{eq: constant-scal}, we find
$w_\eps \in GSBD^p(B_\eps(x_0))$ such that $w_\eps = u_\eps$ on $B_\eps(x_0) \setminus B_{(1-\theta)\eps}(x_0)$ and 
\begin{align}\label{eq:repr1++'}
\mathcal{F}&(w_\eps, B_\eps(x_0)) \le   (1+\eta)\big(\mathcal{F}(z_\eps,A_{\eps,x_0})  + \mathcal{F}(u_\eps, \B_{\eps,x_0})\big) + \frac{M}{\eps^p} \Vert z_\eps - u_\eps \Vert^p_{L^p((A \setminus A')_{\eps,x_0})} + \mathcal{L}^d(B_\eps(x_0))\eta,
\end{align}
where  $M>0$ depends on $\theta$ and $\eta$, but is independent of $\eps$. In particular, we have $w_\eps = u_\eps = u$ in a neighborhood of $\partial B_\eps(x_0)$ by \eqref{eqs:blowupJumpPoints}(i). By  \eqref{eqs:blowupJumpPoints}(ii) and the fact that $z_\eps = \bar{u}^{\rm surf}_{x_0}$ outside $B_{(1-3\theta)\eps}(x_0)$ we find
\begin{align*}
\lim_{\eps \to 0} \ \eps^{-(d-1+p)} \Vert z_\eps - u_\eps \Vert^p_{L^p((A \setminus A')_{\eps,x_0})} = \lim_{\eps \to 0} \ \eps^{-(d-1+p)} \Vert u_\eps - \bar{u}^{\rm surf}_{x_0} \Vert^p_{L^p(B_{(1-\theta)\eps}(x_0))} =0.
\end{align*}
Inserting this in \eqref{eq:repr1++'}, we find that, for a suitable  sequence  $(\rho_\eps)_\eps \subset (0,+\infty)$ with $\rho_\eps \to 0$, 
\begin{align}\label{eq:repr1+++'}
\mathcal{F}&(w_\eps, B_\eps(x_0)) \le   (1+\eta)\big(\mathcal{F}(z_\eps,A_{\eps,x_0})  + \mathcal{F}(u_\eps,  \B_{\eps,x_0})\big) +  \eps^{d-1}\rho_\eps +  \gamma_d  \eps^d\eta.
\end{align}
We now evaluate the first terms in the right-hand side of \eqref{eq:repr1+++'}: since  $z_\eps = \bar{u}^{\rm bulk}_{x_0}$ on  $B_{\eps}(x_0) \setminus B_{(1-3\theta)\eps}(x_0) \subset \B_{\eps,x_0} $,  by  {\rm (${\rm H_1}$)},  {\rm (${\rm H_4}$)},  and   \eqref{eq:repr1+'} we have 
\begin{align}\label{eq: rep1'}
\limsup_{\eps\to 0}\frac{\mathcal{F}(z_\eps, A_{\eps,x_0})}{\eps^{d-1}} &\le \limsup_{\eps\to 0} \frac{\mathcal{F}(z_\eps,B_{(1- 3 \theta)\eps}(x_0))}{ \eps^{d-1}}+  \limsup_{\eps\to 0} \frac{\mathcal{F}(\bar{u}^{\rm surf}_{x_0},\B_{\eps,x_0})}{ \eps^{d-1}}\notag\\
&  \le  \limsup_{\eps\to 0} \frac{\mathbf{m}_{\mathcal{F}}(\bar{u}^{\rm surf}_{x_0},B_{(1-3\theta)\eps}(x_0))}{\eps^{d-1}} + \beta  \,  \hd(\B \cap \Pi_0)\notag \\
&\le  (1-3\theta)^{d-1}\limsup_{\eps\to 0} \frac{\mathbf{m}_{\mathcal{F}}(\bar{u}^{\rm surf}_{x_0},B_{\eps}(x_0))}{\eps^{d-1}} +\beta\, \gamma_{d-1}  \big( 1-(1-4\theta)^{d-1}  \big),
\end{align}
 where,  as in Lemma~\ref{le:blowupJumpPoints}, we  denote by  $\Pi_0$ the hyperplane passing  through  $x_0$ with normal $\nu_u(x_0)$.
By  {\rm (${\rm H_4}$)} and  \eqref{eqs:blowupJumpPoints}(iii),(iv)
 we  get
\begin{align}\label{eq: rep1XXX'}
\limsup_{\eps \to 0} \frac{\mathcal{F}(u_\eps,  \B_{\eps,x_0})}{  \beta   \eps^{d-1}}  &\le  \limsup_{\eps \to 0} \frac{\int_{  \B_{\eps,x_0}} (1+  |e(u_\eps)|^p) \dx }{ \eps^{d-1}} 
+ \limsup_{\eps \to 0} \frac{\mathcal{H}^{d-1}(J_{u_\eps}\cap  \B_{\eps,x_0}) }{ \eps^{d-1}} 
\notag \\&=   \gamma_{d-1}\big( 1-(1-4\theta)^{d-1}  \big).
\end{align}
Collecting \eqref{eq:repr1+++'}, \eqref{eq: rep1'}, \eqref{eq: rep1XXX'},  and recalling  $\rho_\eps \to 0$, as well as  the fact that  
$w_\eps = u$ in a neighborhood of $\partial B_\eps(x_0)$, we obtain
\begin{equation*}
\begin{split}
 \lim_{\eps \to 0}\frac{\mathbf{m}_{\mathcal{F}}(u,B_\eps(x_0))}{ \gamma_{d-1}\eps^{d-1}} &\le  \limsup_{\eps \to 0}\frac{\mathcal{F}(w_\eps, B_\eps(x_0))}{ \gamma_{d-1}\eps^{d-1}}  \\
 & \le (1+\eta)  (1-3\theta)^{d-1} \limsup_{\eps \to 0}  \frac{\mathbf{m}_{\mathcal{F}}(\bar{u}^{\rm surf}_{x_0},B_\eps(x_0))}{ \gamma_{d-1}\eps^{d-1}}    +  2 (1+\eta) \beta\, \big( 1-(1-4\theta)^{d-1} \big).
 \end{split}
\end{equation*}
Passing to the limit as $\eta,\theta \to 0$ we conclude  inequality ``$\le$''   in \eqref{eq: PR-proof1}.  
\medskip \\
\textit{Step~2 (Inequality ``$\geq$'' in \eqref{eq: PR-proof1}):} We fix $\eta,\,\theta>0$ and let, as in Step~1, $(u_\eps)_\eps$ be the  family  given
by Lemma \ref{le:blowupJumpPoints}.   By \eqref{eqs:blowupJumpPoints}(i) and  Fubini's  Theorem,  for each $\eps >0$  we can find $ s_\eps \in (1-4\theta, 1-3\theta)\eps  $ such that
\begin{align}\label{1404201943}
{\rm (i)}&  \ \ \lim_{\eps \to 0}\varepsilon^{-(d-1)} \hd\big(\{u\neq u_{\eps}\} \cap \partial B_{s_\eps}(x_0)\big)= 0,\notag\\
{\rm (ii)} & \ \ \hd\big((J_u \cup J_{u_{\eps}}) \cap \partial B_{s_\eps}(x_0) \big)=0 \quad \text{ for all  $\eps >0$}.
\end{align}
 We consider $z_\eps \in GSBD^p(B_{s_\eps}(x_0))$ such that $z_\eps= u$ in a neighborhood of $\partial B_{s_\eps}(x_0)$,  and 
\begin{equation}\label{1404201956}
\mathcal{F}(z_\eps,B_{s_\eps}(x_0)) \le \mathbf{m}_{\mathcal{F}}(u, B_{s_\eps}(x_0)) + \eps^{d}.
\end{equation} 
 We extend $z_\eps$ to a function in $GSBD^p(B_{\eps}(x_0))$ by setting 
\begin{equation}\label{1404201958}
z_\eps=u_{\eps}\quad \text{in }B_{\eps}(x_0) \sm B_{s_\eps}(x_0).
\end{equation}
We apply Lemma \ref{lemma: fundamental estimate} for $z_{\eps}$ (in place of $u$), $\bar{u}_{x_0}^{\rm surf}$ (in place of $v$), and for the sets $A$, $A'$, $B$ as in Step~1, in correspondence to $\eps$.
By   \eqref{eq: assertionfund}--\eqref{eq: constant-scal},  there  exists 
$w_{\eps} \in GSBD^p(B_{\eps}(x_0))$ such that $w_{\eps} = \bar{u}^{\rm surf}_{x_0}$ on $B_{\eps}(x_0) \setminus B_{(1-\theta){\eps}}(x_0)$, 
and 
\begin{equation*} 
\begin{split}
\mathcal{F}(w_{\eps}, B_{\eps}(x_0)) \le &  (1+\eta)\big(\mathcal{F}(z_\eps, A_{\eps,x_0})  + \mathcal{F}(\bar{u}^{\rm surf}_{x_0},  \B_{\eps,x_0})\big) +   \frac{ M}{\eps^p} \Vert z_\eps - \bar{u}^{\rm surf}_{x_0} \Vert^p_{L^p( (A \setminus A')_{\eps,x_0})} 
 + \mathcal{L}^d(B_{\eps}(x_0))\eta.
\end{split}
\end{equation*}
We observe that $z_\eps = u_{\eps}$ outside $B_{(1-3\theta){\eps}}(x_0)$, by \eqref{1404201958} and the choice of  $s_\eps$.  Then, as done in Step~1, we may employ \eqref{eqs:blowupJumpPoints}(ii).  This gives us
a sequence $(\rho_\eps)_\eps\subset (0,+\infty)$ with  $\rho_\eps \to 0$ as $\eps\to 0$  such that 
\begin{align}\label{eq: rep2-new'}
\mathcal{F}&(w_\eps, B_{\eps}(x_0)) \le   (1+\eta)\big(\mathcal{F}(z_\eps,A_{\eps,x_0})  + \mathcal{F}(\bar{u}^{\rm surf}_{x_0},\B_{\eps,x_0})\big) +  \eps^{d-1}\rho_\eps +\gamma_d\,\eps^d\,\eta.
\end{align} 
We estimate the first terms in \eqref{eq: rep2-new'}.  We get by {\rm (${\rm H_1}$)},  {\rm (${\rm H_4}$)},  \eqref{1404201956}--\eqref{1404201958}, and the choice of $s_\eps$   that
\begin{equation}\label{eq: rep1-new'}
\mathcal{F}(z_\eps,A_{\eps,x_0})
\leq \mathbf{m}_{\mathcal{F}}(u, B_{s_\eps}(x_0)) + \eps^{d} + \beta \, \hd\big((\{u\neq u_{\eps}\} \cup J_u \cup J_{u_{\eps}}) \cap \partial B_{s_\eps}(x_0)\big) + \mathcal{F}(u_{\eps}, \B_{\eps,x_0}).
\end{equation}
By \eqref{eq: rep1XXX'}, \eqref{1404201943},  and the fact that $s_\eps \le (1-3\theta)\eps$  we thus deduce that
\begin{align}\label{1404202056}
\limsup_{\eps \to 0} \frac{\mathcal{F}(z_\eps, A_{\eps,x_0})}{\eps^{d-1}}&\leq \limsup_{\eps \to 0} \frac{\mathbf{m}_{\mathcal{F}}(u, B_{s_\eps}(x_0))}{\eps^{d-1}} + \beta\, \gamma_{d-1} \big( 1-(1-4\theta)^{d-1} \big)\notag
\\&
\leq (1-3 \theta)^{d-1} \limsup_{\eps \to 0} \frac{\mathbf{m}_{\mathcal{F}}(u, B_{ \eps}(x_0))}{\eps^{d-1}} + \beta\, \gamma_{d-1} \big( 1-(1-4\theta)^{d-1} \big),
\end{align}
and,  similarly to   \eqref{eq: rep1'}, 
\begin{equation}\label{1404202104}
 \limsup_{\eps \to 0}  \frac{\mathcal{F}(\bar{u}^{\rm surf}_{x_0}, \B_{\eps,x_0})}{\eps^{d-1}} \leq \beta\, \gamma_{d-1} \big( 1-(1-4\theta)^{d-1} \big).
\end{equation}
Collecting  \eqref{eq: rep2-new'}, \eqref{1404202056},   \eqref{1404202104}  and using $\rho_\eps \to 0$   we derive
\begin{align*}
 \limsup_{\eps \to 0}\frac{\mathcal{F}(w_\eps, B_\eps(x_0))}{\eps^{d-1}} & 
  \le (1+\eta) \Big( (1-3 \theta)^{d-1} \limsup_{\eps \to 0} \frac{\mathbf{m}_{\mathcal{F}}(u, B_{ \eps}(x_0))}{\eps^{d-1}}  + 2\beta  \gamma_{d-1} \big( 1-(1-4\theta)^{d-1} \big)\Big).
\end{align*}
Finally,  recalling that $w_{\eps}=\bar{u}^{\rm surf}_{x_0}$ in a neighborhood of $\partial B_{\eps}(x_0)$, and using the arbitrariness of $\eta$, $\theta>0$ we  obtain 
\begin{equation*}
\limsup_{\eps \to 0}\frac{\mathbf{m}_{\mathcal{F}}(\bar{u}^{\rm surf}_{x_0},B_\eps(x_0))}{\gamma_{d-1}\eps^{d-1} } \leq    \limsup_{\eps \to 0}\frac{\mathcal{F}(w_\eps, B_\eps(x_0))}{\gamma_{d-1}\eps^{d-1}}  \le \limsup_{\eps \to 0} \frac{\mathbf{m}_{\mathcal{F}}(u, B_{ \eps}(x_0))}{\gamma_{d-1}\eps^{d-1}}= \lim_{\varepsilon\to 0} \frac{\mathbf{m}_{\mathcal{F}}(u, B_{ \varepsilon}(x_0))}{\gamma_{d-1}\varepsilon^{d-1}}.
\end{equation*}
This  shows  ``$\ge$''   in \eqref{eq: PR-proof1}  and   concludes the proof. 
\end{proof}



\section{The $SBD^p$ case}\label{app: sbd}

 This  section  is devoted to the  analysis of the integral representation result  for  $\mathcal{F}\colon SBD^p(\Omega) \times \mathcal{B}(\Omega) \to  [0,+\infty)$ satisfying (H$_1$)--(H$_3$) and (H$_4^\prime$). This case has been addressed, for $d=2$, in \cite{Conti-Focardi-Iurlano:15}. On the one hand, the arguments there could be now generalized to general dimension  by virtue of  Theorem~\ref{th: kornSBDsmall}.  On the other hand,  as we are going to show, the result  can also be obtained with minor changes of our more general  strategy. 
 
%
%
We start by pointing out that, under (H$_4^\prime$), only competitors in $SBD^p$ may have finite energy.
In fact,  in view of  Proposition~\ref{prop: onlysbd}, in the present setting definition \eqref{eq: general minimization}  reads as  
\begin{equation}\label{eq: general minimizationsbd}
\mathbf{m}_{\mathcal{F}}(u,A) = \inf_{v \in SBD^p(\Omega)} \  \lbrace \mathcal{F}(v,A): \ v = u \ \text{ in a neighborhood of } \partial A \rbrace\,.
\end{equation}

Then, the following integral  representation  result holds.
\begin{theorem}[Integral representation in $SBD^p$]\label{theorem: SBD-representation}
Let $\Omega \subset \R^d$ be open, bounded with Lipschitz boundary and suppose that  $\mathcal{F}\colon SBD^p(\Omega)  \times \mathcal{B}(\Omega) \to  [0,+\infty)$ satisfies {\rm (${\rm H_1}$)}--{\rm (${\rm H_3}$)} and  {\rm (${\rm H^\prime_4}$)}.   Then  
$$\mathcal{F}(u,B) = \int_B f\big(x,u(x),\nabla u(x)\big)  \, {\rm d}x +    \int_{J_u\cap  B} g\big(x,u^+(x),u^-(x),\nu_u(x)\big)\, \mathrm{d} \mathcal{H}^{d-1}(x)$$
for all $u \in  SBD^p(\Omega)$  and   $B \in \mathcal{B}(\Omega)$, where $f$ is given  by
\begin{align*}
f(x_0,u_0,\xi) = \limsup_{\eps \to 0} \frac{\mathbf{m}_{\mathcal{F}}(\ell_{x_0,u_0,\xi},B_\eps(x_0))}{\gamma_d\eps^{d}}
\end{align*}
for all $x_0 \in \Omega$, $u_0 \in \R^d$, $\xi \in  \mathbb{M}^{d \times d} $,  and $\ell_{x_0,u_0,\xi}$ as in \eqref{eq: elastic competitor}, and $g$ is given by 
\begin{align*}
g(x_0,a,b,\nu) = \limsup_{\eps \to 0} \frac{\mathbf{m}_{\mathcal{F}}(u_{x_0,a,b,\nu},B_\eps(x_0))}{\gamma_{d-1}\eps^{d-1}}
\end{align*}
for all $  x_0  \in \Omega$,  $a,b \in \R^d$, $\nu \in \mathbb{S}^{d-1}$, and $u_{x_0,a,b,\nu}$ as in \eqref{eq: jump competitor}. 
\end{theorem}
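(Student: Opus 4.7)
My plan is to mirror the proof of Theorem~\ref{theorem: PR-representation}, replacing $GSBD^p$ by $SBD^p$ throughout and working with the $L^1$-topology, which by (H$_4^\prime$) agrees with convergence in measure along bounded-energy sequences thanks to the classical Korn--Poincar\'e inequality in $BD$. Note that definition \eqref{eq: general minimizationsbd} is consistent with allowing $GSBD^p$-competitors, since by Proposition~\ref{prop: onlysbd} any $v$ with $\mathcal{F}(v,A)<+\infty$ and boundary datum in $SBD^p$ must itself lie in $SBD^p$. The three ingredients to be adapted are the analogs of Lemmas~\ref{lemma: G=m}, \ref{lemma: minsame}, and~\ref{lemma: minsame2}.

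For the $SBD^p$ version of Lemma~\ref{lemma: G=m}, I would introduce $\mathbf{m}^\delta_{\mathcal{F}}$ and $\mathbf{m}^*_{\mathcal{F}}$ exactly as in \eqref{eq: delta-m}--\eqref{eq: m*} and prove the equality $\mathcal{F}(u,A)=\mathbf{m}^*_{\mathcal{F}}(u,A)$ via the scheme of Lemma~\ref{lemma: G=m-2}. The inequality $\mathbf{m}^*_{\mathcal{F}}\le\mathcal{F}$ is immediate from (H$_1$). For the reverse inequality, I would consider $v^\delta=\sum_i v^\delta_i\chi_{B^\delta_i}+u\chi_{N^\delta_0}$ with $v^\delta_i$ nearly optimal in $\mathbf{m}_{\mathcal{F}}(u,B^\delta_i)$; by (H$_4^\prime$), both $\|e(v^\delta)\|_{L^p}$ and $\int_{J_{v^\delta}}(1+|[v^\delta]|)\, \mathrm{d}\mathcal{H}^{d-1}$ remain uniformly bounded, so the standard compactness/closure theorem in $SBD^p$ gives $v^\delta\in SBD^p(\Omega)$. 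The only new technical step is to prove $v^\delta \to u$ in $L^1(\Omega)$: I would apply the $BD$-Korn--Poincar\'e inequality (see \cite[Theorem~II.2.2]{Tem}) on each ball $B^\delta_i$ to the function $u-v^\delta_i$, which has vanishing trace on $\partial B^\delta_i$, obtaining $\|u-v^\delta_i\|_{L^1(B^\delta_i)}\le c\,\delta\, |\mathrm{E}(u-v^\delta_i)|(B^\delta_i)$; summing over the disjoint $B^\delta_i$ and using the uniform bound on $|\mathrm{E}v^\delta|(\Omega)$ gives $\|u-v^\delta\|_{L^1(\Omega)}\to 0$ as $\delta \to 0$. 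From here the proof transfers verbatim; the deduction of Lemma~\ref{lemma: G=m} from Lemma~\ref{lemma: G=m-2} is unaffected.

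For the $SBD^p$ analogs of Lemmas~\ref{lemma: minsame} and~\ref{lemma: minsame2}, the blow-up constructions of Lemmas~\ref{lemma: blow up} and~\ref{le:blowupJumpPoints} apply unchanged, since they rely only on Theorem~\ref{th: kornSBDsmall} and produce approximants $u_\eps$ that lie in $SBD^p$ whenever $u$ does: the exceptional sets $\omega_\eps$, $\omega^\pm_\eps$ have finite perimeter and the Sobolev extensions $\hat v^\pm_\eps$ contribute no Cantor part. In fact, under (H$_4^\prime$) the control of $|[u_\eps]|$ is automatic because $u^\pm(x_0)\in\R^d$ are fixed and $\hat v^\pm_\eps$ has bounded trace. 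The fundamental estimate Lemma~\ref{lemma: fundamental estimate} already covers the $SBD^p$ case, and (H$_2$) is in this setting equivalent to $L^1$-lower semicontinuity, which is exactly what is needed to pass to the limit in the $\limsup$ inequalities obtained after gluing. With these observations, Steps~1 and~2 of the proofs of Lemmas~\ref{lemma: minsame} and~\ref{lemma: minsame2} go through verbatim, yielding \eqref{eq: PR-proof1-bulk} and \eqref{eq: PR-proof1} in the $SBD^p$ framework.

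The conclusion follows by reproducing word for word the derivation of \eqref{eq: to show1}--\eqref{eq: to show2} from the end of the proof of Theorem~\ref{theorem: PR-representation}. I do not expect a genuine obstacle; the main point requiring care is the verification that the glued function $v^\delta$ belongs to $SBD^p(\Omega)$ and converges to $u$ in $L^1$, which is where assumption (H$_4^\prime$) is used essentially through the $BD$-Korn--Poincar\'e inequality. Once this is in hand, the global method of Section~\ref{sec: global method} and the blow-up analyses of Sections~\ref{sec:bulk}--\ref{sec:surf} deliver Theorem~\ref{theorem: SBD-representation} directly.
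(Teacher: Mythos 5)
Your treatment of the analogue of Lemma~\ref{lemma: G=m-2} is sound: using the classical $BD$-Korn--Poincar\'e inequality to get $\|u-v^\delta_i\|_{L^1(B^\delta_i)}\le c\,\delta\,|\mathrm{E}(u-v^\delta_i)|(B^\delta_i)$ is exactly the route of Conti--Focardi--Iurlano, and it is a legitimate alternative to the paper's choice of keeping the $GSBD$ Korn inequality (which only yields convergence in measure, sufficient for {\rm ($\mathrm{H}_2$)}).

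The genuine gap is in the claim that the blow-up lemmas and the gluing steps of Lemmas~\ref{lemma: minsame} and~\ref{lemma: minsame2} go through ``unchanged'' or ``verbatim.'' Under {\rm ($\mathrm{H}_4^\prime$)} the upper bound carries the extra term $\int_{J}|[w]|\,\mathrm{d}\mathcal{H}^{d-1}$, and this changes the estimates in a way you cannot avoid. First, the approximants $u_\eps$ must come with quantitative control of $\int_{J_{u_\eps}}|[u_\eps]|\,\mathrm{d}\mathcal{H}^{d-1}$ at the appropriate scale (respectively $o(\eps^d)$ at bulk points and $\eps^{d-1}|[\bar u^{\rm surf}_{x_0}]|\,\gamma_{d-1}$ at jump points), and this is \emph{not} automatic from ``$\hat v^\pm_\eps$ has bounded trace.'' Establishing it requires the strictly $SBD$ blow-up facts $\eps^{-d}\int_{J_u\cap B_\eps(x_0)}|[u]|\,\mathrm{d}\mathcal{H}^{d-1}\to 0$ ($\mathcal{L}^d$-a.e.\ $x_0$) and $\eps^{-d}\int_{B_\eps(x_0)}|u-\bar u^{\rm surf}_{x_0}|\,\mathrm{d}x\to 0$ ($\mathcal{H}^{d-1}$-a.e.\ $x_0\in J_u$), combined with a good choice of the gluing radius $s_\eps$ (by Fubini) and a quantitative $W^{1,p}$-trace estimate on $\partial B_{s_\eps}(x_0)$. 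Second, in the gluing step the interface $\partial B_{s_\eps}(x_0)$ produces a new jump whose opening is $|u^+-u^-_\eps|$; under {\rm ($\mathrm{H}_4^\prime$)} this contributes $\beta\int_{\partial B_{s_\eps}(x_0)}|u^+-u^-_\eps|\,\mathrm{d}\mathcal{H}^{d-1}$ to the energy bound, which has to be shown to be $o(\eps^d)$ (resp.\ $o(\eps^{d-1})$) --- another estimate requiring the trace analysis and absent from the $GSBD$ version. Finally, the constant $\beta$ in the final blow-up estimates must be replaced by $\beta(1+|[\bar u^{\rm surf}_{x_0}]|)$, which is harmless but again confirms the estimates are not literally the same. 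So while the overall architecture you describe is right, you cannot simply transplant the $GSBD^p$ lemmas; the $SBD$ blow-up lemmas need to be reproved with the additional $|[u_\eps]|$ control, and the energy estimates in the gluing step need the extra interface term and its control.
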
 

 The remainder  of this section  is devoted to the proof of Theorem \ref{theorem: SBD-representation} which follows along the lines of the proof of Theorem \ref{theorem: PR-representation} devised in Section \ref{sec: global method}.  First, the analogue of Lemma \ref{lemma: G=m} holds essentially with  the same proof.

\begin{lemma}\label{lemma: G=msbd}
Suppose that $\mathcal{F}$ satisfies {\rm (${\rm H_1}$)}--{\rm (${\rm H^\prime_3}$)}  and {\rm (${\rm H^\prime_4}$)}.  Let $u \in SBD^p(\Omega)$ and let   $\mu = \mathcal{L}^d\lfloor_{\Omega} + \mathcal{H}^{d-1}\lfloor_{J_u \cap \Omega}$. Then for $\mu$-a.e.\ $x_0 \in \Omega$ we have
 $$\lim_{\eps \to 0}\frac{\mathcal{F}(u,B_\eps(x_0))}{\mu(B_\eps(x_0))} =  \lim_{\eps \to 0}\frac{\mathbf{m}_{\mathcal{F}}(u,B_\eps(x_0))}{\mu(B_\eps(x_0))}.$$
\end{lemma}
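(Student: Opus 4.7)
The plan is to mimic the proof of Lemma \ref{lemma: G=m} step by step; the structural argument transferring an identity $\mathcal{F}(u,A)=\mathbf{m}^*_{\mathcal{F}}(u,A)$ to the pointwise density identity on $B_\varepsilon(x_0)$, via the fine covers $X^\delta$, $Y^\delta$ and the left continuity of $\varepsilon\mapsto\mathbf{m}_{\mathcal{F}}(u,B_\varepsilon(x))$, uses only (H$_1$), (H$_4'$) through its upper bound, and the regularity of $\mu$; it therefore carries over verbatim. The only step that requires genuine adaptation is the $SBD^p$ counterpart of Lemma \ref{lemma: G=m-2}, namely showing $\mathcal{F}(u,A)=\mathbf{m}^*_{\mathcal{F}}(u,A)$ for all $u\in SBD^p(\Omega)$ and $A\in\mathcal{A}(\Omega)$.

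For this identity, the inequality $\mathbf{m}^*_{\mathcal{F}}(u,A)\le\mathcal{F}(u,A)$ is immediate from the definitions \eqref{eq: delta-m}--\eqref{eq: m*}, together with (H$_1$), exactly as in the $GSBD^p$ case. For the converse, I would fix $\delta>0$, take pairwise disjoint balls $(B_i^\delta)_i$ with $\mathrm{diam}(B_i^\delta)\le\delta$ almost covering $A$ and near-optimizing $\mathbf{m}_{\mathcal{F}}^\delta(u,A)$, choose $v_i^\delta$ competing in $\mathbf{m}_{\mathcal{F}}(u,B_i^\delta)$ up to an error $\delta\mathcal{L}^d(B_i^\delta)$, and define $v^\delta:=\sum_i v_i^\delta\chi_{B_i^\delta}+u\chi_{N_0^\delta}$. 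Note that Proposition~\ref{prop: onlysbd} combined with the upper bound in (H$_4'$) guarantees $v_i^\delta\in SBD^p(B_i^\delta)$, and then \cite[Theorem~1.1]{Crismale1} together with $v_i^\delta=u$ near $\partial B_i^\delta$ gives $v^\delta\in SBD^p(\Omega)$ with the expected energy bound.

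The key place where the $SBD^p$ assumption (H$_4'$) is strictly easier than (H$_4$) is in verifying $v^\delta\to u$. Here the lower bound in (H$_4'$) controls both $\|e(v_i^\delta)\|_{L^p(B_i^\delta)}$ and, crucially, the jump opening $\int_{J_{v_i^\delta}\cap B_i^\delta}|[v_i^\delta]|\,\mathrm{d}\mathcal{H}^{d-1}$, so that the full distributional symmetrized gradient $|\mathrm{E}v_i^\delta|(B_i^\delta)$ is bounded in terms of $\mathcal{F}(v_i^\delta,B_i^\delta)$ and $\mathcal{L}^d(B_i^\delta)$. Since $v_i^\delta-u\in SBD(B_i^\delta)$ with zero boundary trace, the classical Korn--Poincar\'e inequality in $BD$ \cite[Theorem~II.2.2]{Tem}, scaled to a ball of radius at most $\delta$, yields
\begin{equation*}
\|v_i^\delta-u\|_{L^1(B_i^\delta)}\le C\delta\big(|\mathrm{E}u|(B_i^\delta)+|\mathrm{E}v^\delta|(B_i^\delta)\big)
\end{equation*}
with $C$ independent of $i$ and $\delta$. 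Summation over the disjoint family $(B_i^\delta)_i$ then gives $\|v^\delta-u\|_{L^1(A)}\le C\delta$ times a uniform energy bound, so $v^\delta\to u$ in $L^1(A)$ and hence in measure. This replaces the exceptional-set argument of Lemma \ref{lemma: G=m-2} based on Corollary~\ref{cor: kornSBDsmall}. From here, lower semicontinuity (H$_2$) along $v^\delta\to u$ and the energy estimate $\mathcal{F}(v^\delta,A)\le\mathbf{m}_{\mathcal{F}}^\delta(u,A)+\delta(1+\mathcal{L}^d(A))$ (exactly as in \eqref{eq: to show-flaviana2}) close the argument by letting $\delta\to 0$.

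The main technical obstacle I expect is only bookkeeping: verifying that the Korn--Poincar\'e constant on $B_i^\delta$ is scale-invariant in the ball radius, and that the energy of the glued competitor $v^\delta$ does not accumulate on the null set $N_0^\delta$ (using (H$_1$) and (H$_4'$), since $\mathcal{H}^{d-1}(J_u\cap N_0^\delta)=\mathcal{L}^d(N_0^\delta\cap A)=0$). No further modification of Sections \ref{sec:bulk} and \ref{sec:surf} is needed, since the blow-up lemmas (Lemmas \ref{lemma: blow up} and \ref{le:blowupJumpPoints}) are already formulated and proved in $GSBD^p$ and hence apply a fortiori to $SBD^p$ functions.
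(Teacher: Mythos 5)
Your argument is correct, but it takes a genuinely different route from the paper's at the one nontrivial step, namely showing $v^\delta\to u$. You exploit the fact that the lower bound in (H$_4'$) controls the full mass $|\mathrm{E}v_i^\delta|(B_i^\delta)$ (including the jump part $\int_{J_{v_i^\delta}}|[v_i^\delta]|\,\mathrm{d}\mathcal{H}^{d-1}$) and then invoke the classical scale-invariant Korn--Poincar\'e inequality in $BD$ to obtain $L^1$-convergence of $v^\delta$ to $u$ directly, with no exceptional sets. This is essentially the approach of \cite{Conti-Focardi-Iurlano:15}. The paper, by contrast, deliberately reuses the $GSBD^p$ machinery unchanged: it keeps the proof of Lemma~\ref{lemma: G=m-2} verbatim, i.e.\ applies Corollary~\ref{cor: kornSBDsmall} and Remark~\ref{rem: Korn-scaling}(iii) to produce the exceptional sets $\omega_i^\delta$ and concludes only convergence in measure via \eqref{eq: compa1}--\eqref{eq: compa3}, noting that \eqref{eq: to show-flaviana2-NNN} still holds since (H$_4'$) is stronger than (H$_4$). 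Your route is the more classical one and uses the extra control in (H$_4'$) more effectively; the paper's route keeps the $SBD^p$ and $GSBD^p$ arguments parallel and avoids introducing the $BD$ Korn--Poincar\'e inequality. Both are valid.

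One small bookkeeping correction: $v_i^\delta$ lies in $SBD^p(B_i^\delta)$ automatically, because the infimum in \eqref{eq: general minimizationsbd} is taken over $SBD^p$ competitors; Proposition~\ref{prop: onlysbd} is not needed there. Where it \emph{is} needed is for the glued function $v^\delta$: the compactness result \cite[Theorem~1.1]{Crismale1} only yields $v^\delta\in GSBD^p(\Omega)$, and then the \emph{lower} bound in (H$_4'$) (not the upper bound) combined with \eqref{eq: to show-flaviana2} bounds $\int_{J_{v^\delta}}|[v^\delta]|\,\mathrm{d}\mathcal{H}^{d-1}$, so Proposition~\ref{prop: onlysbd} upgrades $v^\delta$ to $SBD^p(\Omega)$.
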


\begin{proof} 
One can follow the same argument used to prove Lemma \ref{lemma: G=m} through \BBB Lemma  \ref{lemma: G=m-2}. \EEE 

 First,  we remark that \cite[Lemmas 4.2, 4.3]{Conti-Focardi-Iurlano:15} is proved under the assumptions {\rm (${\rm H^\prime_4}$)} and $u \in SBD^p(\Omega)$, hence it can be used \BBB to derive \eqref{eq:referee}. \EEE

Concerning Lemma~\ref{lemma: G=m-2},  as the lower bound of {\rm (${\rm H^\prime_4}$)} is stronger than the one of {\rm (${\rm H_4}$)},  the $GSBD$ compactness  result \cite[Theorem~1.1]{Crismale1} is still applicable. 
 First, this shows $v^\delta \in GSBD^p(\Omega)$.  Additionally, \eqref{eq: to show-flaviana2}, {\rm (${\rm H^\prime_4}$)}, and Proposition~\ref{prop: onlysbd} imply that the function $v^\delta$ belongs indeed to $SBD^p(\Omega)$. The rest of the proof remains unchanged, upon noticing that, under assumption  (${\rm H^\prime_4}$), \eqref{eq: to show-flaviana2-NNN} still holds. 
\end{proof}

 We now address the adaptions necessary for the bulk density. When $u \in SBD^p(\Omega)$,  we show that  the approximating sequence constructed in Lemma \ref{lemma: blow up}  satisfies  some additional properties. 

\begin{lemma}
Let $u \in SBD^p(\Omega)$. Let $\theta \in (0,1)$. For $\mathcal{L}^{d}$-a.e.\ $x_0 \in \Omega$  there exists a  family  $u_\eps \in SBD^p(B_\eps(x_0))$ such that \eqref{eq: blow up-new} holds, and additionally
\begin{align}\label{eq: blowup-new+}
{\rm (i)} &  \ \  \lim_{\eps \to 0}  \ \eps^{-(d+1)} \int_{B_{\eps}(x_0)}| u_\eps  - u|\, \mathrm{d}x = 0,\notag \\ 
{\rm (ii)} &  \ \  \lim_{\eps \to 0} \   \eps^{-d}\,\int_{ J_{u_\eps} }|[u_\eps]|\,\mathrm{d}\mathcal{H}^{d-1}= 0\,.
\end{align}
\end{lemma}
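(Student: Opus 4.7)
The plan is to reuse the family $(u_\eps)_\eps$ constructed in Lemma~\ref{lemma: blow up} and to verify that, under the stronger hypothesis $u\in SBD^p(\Omega)$, the same functions additionally satisfy \eqref{eq: blowup-new+}; the properties in \eqref{eq: blow up-new} then come for free. Recall that $u_\eps=v_\eps+\bar u^{\rm bulk}_{x_0}$ on $B_{(1-\theta)\eps}(x_0)$ and $u_\eps=u$ elsewhere, with $v_\eps\in W^{1,p}$, $\omega_\eps\subset B_{(1-\theta)\eps}(x_0)$ the exceptional set from Theorem~\ref{th: kornSBDsmall}, $a_\eps$ the associated infinitesimal rigid motion, and $u_\eps=u$ on $B_\eps(x_0)\sm\omega_\eps$. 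Since $v_\eps+\bar u^{\rm bulk}_{x_0}\in W^{1,p}\subset SBD^p$ on the inner ball and $u\in SBD^p$ on the outer annulus, gluing produces $u_\eps\in SBD^p(B_\eps(x_0))$; after a routine Fubini adjustment of the inner radius, $u_\eps$ has no additional essential jump on $\partial B_{(1-\theta)\eps}(x_0)$.

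Property \eqref{eq: blowup-new+}(ii) is the easier one. Since $u_\eps$ is Sobolev on $B_{(1-\theta)\eps}(x_0)$ and equals $u$ on $B_\eps(x_0)\sm\overline{B_{(1-\theta)\eps}(x_0)}$, we have $J_{u_\eps}\subset J_u\cap(B_\eps(x_0)\sm\overline{B_{(1-\theta)\eps}(x_0)})$ up to $\hd$-negligible sets, with $[u_\eps]=[u]$ on this intersection. As $u\in SBD^p$, the jump part $\mathrm{E}^j u$ of $\mathrm{E}u$ is a finite measure singular with respect to $\Ld$, so the Besicovitch derivation theorem yields $\lim_{\eps\to 0}\eps^{-d}|\mathrm{E}^j u|(B_\eps(x_0))=0$ for $\Ld$-a.e.\ $x_0\in\Omega$, whence
\[
\eps^{-d}\int_{J_{u_\eps}}|[u_\eps]|\,\dh\le C\,\eps^{-d}\int_{J_u\cap B_\eps(x_0)}|[u]|\,\dh\to 0.
\]

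For \eqref{eq: blowup-new+}(i), since $u_\eps-u=0$ on $B_\eps(x_0)\sm\omega_\eps$, it suffices to bound $\int_{\omega_\eps}|v_\eps+\bar u^{\rm bulk}_{x_0}-u|\,\dx$. Splitting by the triangle inequality through $a_\eps$ yields three contributions. The terms $\int_{\omega_\eps}|v_\eps-a_\eps|\,\dx$ and $\int_{\omega_\eps}|a_\eps|\,\dx$ are bounded by $o(\eps^{d+1})$ via H\"older's inequality together with $\Ld(\omega_\eps)=o(\eps^{d^2/(d-1)})$, the Korn estimate \eqref{eq: R2main-application}(ii), and \eqref{eq: small pert} (which, combined with Lemma~\ref{lemma: rigid motion}, also upgrades to $\|a_\eps\|_{L^\infty(B_\eps(x_0))}=o(\eps)$). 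For the remaining term $\int_{\omega_\eps}|u-\bar u^{\rm bulk}_{x_0}|\,\dx$, I would invoke the classical Korn--Poincar\'e inequality in $BD$ \cite[Theorem~II.2.2]{Tem} applied to $u-\bar u^{\rm bulk}_{x_0}$ on $B_\eps(x_0)$, producing a rigid motion $\hat a_\eps$ with
\[
\|u-\bar u^{\rm bulk}_{x_0}-\hat a_\eps\|_{L^{d/(d-1)}(B_\eps(x_0))}\le C\,|\mathrm{E}(u-\bar u^{\rm bulk}_{x_0})|(B_\eps(x_0))=o(\eps^d),
\]
where the decay exploits the Lebesgue point property of $e(u)$ at $x_0$ and $\eps^{-d}|\mathrm{E}^j u|(B_\eps(x_0))\to 0$. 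A H\"older step on $\omega_\eps$ then gives $\|u-\bar u^{\rm bulk}_{x_0}-\hat a_\eps\|_{L^1(\omega_\eps)}\le\Ld(\omega_\eps)^{1/d}\cdot o(\eps^d)=o(\eps^{d+1})$.

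The main obstacle is therefore controlling the auxiliary term $\|\hat a_\eps\|_{L^1(\omega_\eps)}$. The plan is to promote the approximate continuity of $u-\bar u^{\rm bulk}_{x_0}$ at $x_0$ (Lemma~\ref{lemma: approx-grad}) to a sup estimate on $\hat a_\eps$: for each $\varrho>0$, on the set $E_\varrho:=\{|u-\bar u^{\rm bulk}_{x_0}|\le\varrho\eps\}\cap B_\eps(x_0)$, which has $\Ld$-measure $(1-o(1))\gamma_d\eps^d$, the triangle inequality gives
\[
\|\hat a_\eps\|_{L^{d/(d-1)}(E_\varrho)}\le \varrho\eps\,\Ld(E_\varrho)^{(d-1)/d}+\|u-\bar u^{\rm bulk}_{x_0}-\hat a_\eps\|_{L^{d/(d-1)}(B_\eps(x_0))}\le c\varrho\eps^d+o(\eps^d).
\]
Lemma~\ref{lemma: rigid motion} (with $p=d/(d-1)$) then upgrades this to $\|\hat a_\eps\|_{L^\infty(B_\eps(x_0))}\le c\varrho\eps+o(\eps)$, whence $\|\hat a_\eps\|_{L^\infty(B_\eps(x_0))}=o(\eps)$ by the arbitrariness of $\varrho$. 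Finally, $\|\hat a_\eps\|_{L^1(\omega_\eps)}\le\Ld(\omega_\eps)\,\|\hat a_\eps\|_{L^\infty(B_\eps(x_0))}=o(\eps^{d^2/(d-1)+1})=o(\eps^{d+1})$, which combined with the previous bounds completes the proof of (i).
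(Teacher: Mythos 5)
Your argument for \eqref{eq: blowup-new+}(i) is essentially sound, though more laborious than necessary: the paper simply invokes the $L^1$-approximate differentiability theorem for $SBD$ functions \cite[Theorem~7.4]{ACD}, which gives $\int_{B_\eps(x_0)}|u-\bar u^{\rm bulk}_{x_0}|\,\dx=o(\eps^{d+1})$ directly, and then (i) follows from this, H\"older, and \eqref{eq: blow up-new}(ii). Your re-derivation of this decay on the exceptional set via the Temam Korn--Poincar\'e inequality and the $L^\infty$ bound on $\hat a_\eps$ is correct, but it re-proves a known $SBD$ blow-up fact in a roundabout way; in particular the very hypothesis $u\in SBD^p$ is what grants you \cite[Theorem~7.4]{ACD} for free, which your argument does not exploit.

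There is, however, a genuine gap in your proof of \eqref{eq: blowup-new+}(ii). The inclusion $J_{u_\eps}\subset J_u\cap(B_\eps(x_0)\sm\overline{B_{(1-\theta)\eps}(x_0)})$ up to $\hd$-negligible sets is not something that a ``routine Fubini adjustment of the inner radius'' can deliver. Since $\omega_\eps$ is the exceptional set produced by applying Theorem~\ref{th: kornSBDsmall} on the ball $B_{s_\eps}(x_0)$ itself, $\omega_\eps$ depends on $s_\eps$, so one cannot a posteriori pick $s_\eps$ to avoid $\partial^*\omega_\eps$; and if one instead applies Korn on a slightly larger ball and then slices with a smaller sphere, the sphere generically meets $\omega_\eps^{(1)}$ on a set of positive $\hd$-measure. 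Consequently $u_\eps$ does, in general, jump across $\partial B_{s_\eps}(x_0)$: the inner trace of the Sobolev function $v_\eps+\bar u^{\rm bulk}_{x_0}$ need not agree $\hd$-a.e.\ with the outer trace of $u$. The crucial point is that even where this jump is confined to a set of small $\hd$-measure (a subset of $\partial^*\omega_\eps\cap\partial B_{s_\eps}(x_0)$), its \emph{amplitude} is not controlled by the set's measure, so the inequality $\eps^{-d}\int_{J_{u_\eps}}|[u_\eps]|\,\dh\le C\,\eps^{-d}\int_{J_u\cap B_\eps(x_0)}|[u]|\,\dh$ is unjustified. The paper circumvents this by explicitly estimating $\int_{\partial B_{s_\eps}(x_0)}|u_\eps-u|\,\dh$: it combines the $W^{1,p}$ trace theorem for the rescaled Sobolev functions $s_\eps^{-1}\bigl(u_\eps(x_0+s_\eps\,\cdot)-\bar u^{\rm bulk}_{x_0}(x_0+s_\eps\,\cdot)\bigr)$ (controlled via \eqref{eq: blow up-new}(ii),(iii) and Korn's inequality) with a Fubini selection of $s_\eps$ adapted to the $L^1$-blow-up \eqref{eq:migliorblowup+} of $u$, yielding \eqref{eq:buonincollamento-bulk}. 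Both ingredients are needed, and neither appears in your argument; without the trace estimate the sphere contribution to $\int_{J_{u_\eps}}|[u_\eps]|$ is uncontrolled.
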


\begin{proof}
 Since   $u \in SBD(\Omega)$,  for $\mathcal{L}^d$-a.e.\ $x_0$  it holds that 
\begin{equation}\label{eq: SBDblowup}
\lim_{\eps \to 0} \eps^{-d} \int_{J_u\cap B_{\eps}(x_0)} |[u]| \, \mathrm{d} \mathcal{H}^{d-1}   = 0,
\end{equation}
and (see \cite[Theorem 7.4]{ACD}) that
\begin{equation}\label{eq:migliorblowup+}
\lim_{\eps \to 0} \eps^{-(d+1)} \int_{B_{\eps}(x_0)} \big|u- \bar{u}^{\rm bulk}_{x_0}\big| \, \mathrm{d}x = 0,
\end{equation}
where  for brevity we again let  $\bar{u}^{\rm bulk}_{x_0}  =   \ell_{x_0,u(x_0),\nabla u(x_0)}$. Hence, with Fubini's Theorem we can fix $s_\eps \in (1-\theta, 1-\frac\theta2)\eps$ so that $\hd(J_u \cap \partial B_{s_\eps}  (x_0)  )=0$ and
\begin{equation}\label{eq:buonincollamento-bulk}
\lim_{\eps \to 0}\eps^{-d} \int_{\partial B_{s_\eps}(x_0)}  \big|u- \bar{u}^{\rm bulk}_{x_0}\big| \, \mathrm{d}\hd = 0\,.
\end{equation}
We can now perform the same construction as in \eqref{eq: ueps-def} with $s_\eps$ in place of $(1-\theta)\eps$. Notice that  in  this case $u_\eps \in SBD^p(B_\eps(x_0))$. By arguing exactly as in the proof of Lemma~\ref{lemma: blow up}, we derive \eqref{eq: blow up-new}(i),(iii),(iv) while (ii) holds in  $B_{s_\eps}(x_0)$ and a fortiori in $B_{(1-\theta)\eps}(x_0)$. In particular,  this in combination  with  H\"older's inequality,   \eqref{eq:migliorblowup+},  and  $u=u_\eps$ in $B_\eps(x_0)\setminus B_{s_\eps}(x_0)$   yields \eqref{eq: blowup-new+}(i). 

To see  \eqref{eq: blowup-new+}(ii),  observe that, since $s_\eps/\eps$ is bounded from above and from below,  \eqref{eq: blow up-new}(ii),(iii),     the fact that $u_\eps\lfloor_{B_{s_\eps}(x_0)} \in W^{1,p}(B_{s_\eps}(x_0);\R^d)$ (see  \eqref{eq: ueps-def}), and  Korn's  inequality imply  that  
\[
\frac{u_\eps(x_0+s_\eps  \cdot  )- \bar{u}^{\rm bulk}_{x_0}(x_0+s_\eps  \cdot  )}{s_\eps}\to 0 \mbox { in }W^{1,p}\left(  B_{1}(0);  \R^d  \right)\,.
\]
Hence, by the trace inequality and  by scaling back to $B_{s_\eps}(x_0)$,  we obtain
\begin{equation*}
\lim_{\eps\to 0}s_\eps^{ -(d-1)}\int_{\partial B_{s_\eps}(x_0)}     \tfrac{1}{s_\eps}  \left|u_\eps-\bar{u}^{\rm bulk}_{x_0}\right| \, \mathrm{d}\hd = 0 \,.
\end{equation*}
With this, \eqref{eq:buonincollamento-bulk},  and the fact that $s_\eps/\eps$ is bounded from below   we then have
\[
\lim_{\eps \to 0} \   \eps^{-d}\int_{\partial B_{s_\eps}(x_0)} \big|u_\eps- u\big| \, \mathrm{d}\hd = 0\,.
\]
Hence, we get by construction of $u_\eps$ and \eqref{eq: SBDblowup}  that 
\[
\lim_{\eps \to 0} \eps^{-d} \int_{ J_{u_\eps}  } |[u_\eps]| \, \mathrm{d}\hd=\lim_{\eps \to 0} \eps^{-d} \int_{J_u\cap (B_{\eps}(x_0)\setminus B_{s_\eps}(x_0))} |[u]| \, \mathrm{d}\hd= 0,
\]
which concludes the proof. 
\end{proof}

With the above  lemma  at our disposal, we can deduce the asymptotic equivalence of the minimization problems \eqref{eq: general minimizationsbd} for $u$ and
$\bar{u}^{\rm bulk}_{x_0}  =    \ell_{x_0,u(x_0),\nabla u(x_0)}$.

\begin{lemma}\label{lemma: minsamesbd}
Suppose that $\mathcal{F}$ satisfies  {\rm (${\rm H_1}$)},  {\rm (${\rm H_3}$)},  and  {\rm (${\rm H^\prime_4}$)}.  Let $u \in SBD^p(\Omega)$.   Then for $\mathcal{L}^{d}$-a.e.\ $x_0 \in \Omega$  we have
\begin{align}\label{eq: PR-proof1-bulk_NNN}
  \lim_{\eps \to 0}\frac{\mathbf{m}_{\mathcal{F}}(u,B_\eps(x_0))}{\gamma_{d}\eps^{d}} =  \limsup_{\eps \to 0}\frac{\mathbf{m}_{\mathcal{F}}(\bar{u}^{\rm bulk}_{x_0},B_\eps(x_0))}{\gamma_{d}\eps^{d}}. 
\end{align}
\end{lemma}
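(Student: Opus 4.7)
The plan is to repeat, almost verbatim, the two-step argument of Lemma \ref{lemma: minsame}, but replacing the $GSBD^p$ blow-up family from Lemma \ref{lemma: blow up} with the enhanced $SBD^p$ family just constructed, which additionally enjoys \eqref{eq: blowup-new+}. I would fix $x_0 \in \Omega$ at which this enhanced blow-up family $(u_\eps)_\eps$ exists, at which $\lim_{\eps\to 0} \eps^{-d}\mu(B_\eps(x_0)) = \gamma_d$, and at which $\lim_{\eps\to 0}\eps^{-d}\mathbf{m}_{\mathcal{F}}(u,B_\eps(x_0))$ exists (cf.\ Lemma \ref{lemma: G=msbd}); these conditions hold $\Ld$-a.e.\ in $\Omega$. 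The fundamental estimate (Lemma \ref{lemma: fundamental estimate}) is still valid under (H$_4^\prime$) for $SBD^p$ competitors, so it remains the central gluing tool.

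For the inequality ``$\leq$'' in \eqref{eq: PR-proof1-bulk_NNN}, I would follow Step~1 of the proof of Lemma \ref{lemma: minsame} line by line: pick a near-optimal competitor $z_\eps$ for $\mathbf{m}_{\mathcal{F}}(\bar{u}^{\rm bulk}_{x_0}, B_{(1-3\theta)\eps}(x_0))$, extend it by $\bar{u}^{\rm bulk}_{x_0}$, and apply Lemma \ref{lemma: fundamental estimate} to glue it with $u_\eps$ on the annular region $A\setminus A'$ from \eqref{eq: definition of sets}. The $L^p$-control on the gluing annulus provided by \eqref{eq: blow up-new}(ii) is unchanged. The sole new term produced by the stronger upper bound in (H$_4^\prime$), when estimating $\mathcal{F}(u_\eps, \B_{\eps,x_0})$ as in \eqref{eq: rep1XXX}, is the surface contribution $\beta \int_{J_{u_\eps} \cap \B_{\eps,x_0}} |[u_\eps]|\, \mathrm{d}\hd \leq \beta \int_{J_{u_\eps}} |[u_\eps]|\, \mathrm{d}\hd$, which is $o(\eps^d)$ thanks to \eqref{eq: blowup-new+}(ii), hence negligible at the bulk scale $\eps^d$. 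All remaining estimates of Step~1 of Lemma \ref{lemma: minsame} carry over unchanged.

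For the inequality ``$\geq$'', I would follow Step~2 of Lemma \ref{lemma: minsame}, with a slightly more careful Fubini selection of the patching radius $s_\eps \in (1-4\theta, 1-3\theta)\eps$: in addition to the conditions \eqref{1404201943'} (ensured by $\Ld(\{u \neq u_\eps\}) = o(\eps^{d+1})$ from \eqref{eq: blow up-new}(i)), I would also demand $\int_{\partial B_{s_\eps}(x_0)} |u - u_\eps|\, \mathrm{d}\hd = o(\eps^d)$, which is obtainable via Fubini from $\Vert u - u_\eps \Vert_{L^1(B_\eps(x_0))} = o(\eps^{d+1})$ provided by \eqref{eq: blowup-new+}(i). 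This ensures that the extra $|[\cdot]|$-term produced by (H$_4^\prime$) across the patching sphere $\partial B_{s_\eps}$ in the analogue of \eqref{eq: rep1-new'''} is $o(\eps^d)$; the rest of Step~2 then closes exactly as in Lemma \ref{lemma: minsame}, and letting $\theta, \eta \to 0$ yields the claim. The only real obstacle is bookkeeping the two new surface contributions originating from the $|[u]|$-factor in (H$_4^\prime$) — one at the gluing annulus (Step~1) and one at the patching sphere (Step~2) — and these are precisely absorbed by the two enhanced vanishings in \eqref{eq: blowup-new+}, after which the proof reduces to the $GSBD^p$ case.
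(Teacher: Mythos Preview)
Your proposal is correct and follows essentially the same approach as the paper: both arguments repeat the two steps of Lemma~\ref{lemma: minsame}, absorbing the extra $|[u_\eps]|$-term from (H$_4^\prime$) in the annular estimate via \eqref{eq: blowup-new+}(ii), and the extra trace-jump term across $\partial B_{s_\eps}(x_0)$ via a Fubini selection based on \eqref{eq: blowup-new+}(i). The paper writes the latter condition slightly more precisely in terms of the inner and outer traces $u_\eps^-$, $u^+$ (see \eqref{1404201943'new}), but this is exactly what your $\int_{\partial B_{s_\eps}(x_0)} |u-u_\eps|\,\mathrm{d}\hd = o(\eps^d)$ encodes.
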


\begin{proof}
We argue as in the proof of Lemma \ref{lemma: minsame}.

For the ''$\le$'' inequality,  take $u_\eps$  satisfying \eqref{eq: blow up-new} and \eqref{eq: blowup-new+}, and perform the same construction as in Lemma \ref{lemma: minsame}.  (Observe that the fundamental estimate also holds in this case, see Lemma \ref{lemma: fundamental estimate}.)   Notice that, in this case, we have by  {\rm (${\rm H^\prime_4}$)} and \eqref{eq: definition of sets}  that 
\begin{align*}
\mathcal{F}(u_\eps, \B_{\eps,x_0}) \le \beta \int_{\B_{\eps,x_0}} (1+  |e(u_\eps)|^p) \, \dx  + \beta \,\int_{J_{u_\eps} \cap \B_{\eps,x_0}}(1+|[u_\eps]|)\,\mathrm{d}\hd\,.
\end{align*}
 Thus,  using \eqref{eq: blow up-new}(iii),(iv) and \eqref{eq: blowup-new+}(ii), we still get \eqref{eq: rep1XXX}, and may deduce inequality ``$\le$''  in \eqref{eq: PR-proof1-bulk_NNN}.

For the ``$\ge$'' inequality, we start   by  observing  that, if $u_\eps$ also satisfies \eqref{eq: blowup-new+}, in addition to \eqref{1404201943'} we may require that
\begin{align}\label{1404201943'new}
\lim_{\eps \to 0}\varepsilon^{-d}\int_{\partial B_{s_\eps}(x_0)} \left|u^+-u^-_\eps\right| \,\mathrm{d}\hd= 0,
\end{align} 
 where $u^-_\eps$ and $u^+$ indicate the inner and outer traces at $\partial B_{s_\eps}(x_0)$, respectively.  We also have that \eqref{eq: rep1XXX}  holds,  as seen in the previous step.
We then perform the same construction as in Lemma  \ref{lemma: minsame}. In this case, inequality \eqref{eq: rep1-new'''} is replaced  by
\begin{equation*}
\begin{split}
\mathcal{F}(z_\eps,A_{\eps,x_0})
&\leq \mathbf{m}_{\mathcal{F}}(u, B_{s_\eps}(x_0)) + \eps^{d+1} + \beta \, \hd\big((\{u\neq u_{\eps}\} \cup J_u \cup J_{u_{\eps}}) \cap \partial B_{s_\eps}(x_0)\big)  \\
& \ \ \ +\beta \int_{\partial B_{s_\eps}(x_0)} \left|u^+-u^-_\eps\right|  \,\mathrm{d}\hd+ \mathcal{F}(u_{\eps}, \B_{\eps,x_0}),
\end{split}
\end{equation*}
so that, 
using \eqref{eq: rep1XXX}, \eqref{1404201943'},  \eqref{1404201943'new}, and the fact that $s_\eps \le (1-3\theta)\eps$  we are still in a position to deduce \eqref{eq: rep1-new}.  The rest of the argument remains unchanged and we obtain  inequality ``$\ge$'' in \eqref{eq: PR-proof1-bulk_NNN}.
\end{proof}

Similar changes have to be performed also  for the surface density.  We first deduce the analogue of Lemma~\ref{le:blowupJumpPoints}.  We again set  $\bar{u}^{\rm surf}_{x_0}  =  u_{x_0,u^+(x_0),u^-(x_0),\nu_u(x_0)}$ for brevity, see \eqref{eq: jump competitor}.   We also recall the notation in \eqref{eq: shift-not}. 
\begin{lemma}
Let $u \in SBD^p(\Omega)$ and $\theta \in (0,1)$. For $\hd$-a.e.\ $x_0 \in J_u$ there exists a family $u_\varepsilon  \in    SBD^p(B_\varepsilon(x_0))$ such that \eqref{eqs:blowupJumpPoints} holds, and additionally
\begin{align}\label{eqs:blowupJumpPoints+}
{\rm (i)} &  \ \  \lim_{\eps \to 0}  \ \eps^{-d} \int_{B_{\eps}(x_0)}|  u_\eps   - u|\, \mathrm{d}x = 0, \\ 
{\rm (ii)} &  \ \  \lim_{\eps \to 0} \   \eps^{-(d-1)}\,\int_{J_{u_\eps}\cap   E_{\eps,x_0} }|[u_\eps]| \, \mathrm{d}\mathcal{H}^{d-1}= |[ \bar{u}^{\rm surf}_{x_0}]|\, \hd(\Pi_0 \cap E) \quad \text{for all Borel sets } E \subset B_1(x_0),\notag
\end{align}
where $\Pi_0$ denotes the hyperplane passing  through  $x_0$ with normal $\nu_u(x_0)$.
\end{lemma}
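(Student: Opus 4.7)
The plan is to revisit the construction of Lemma~\ref{le:blowupJumpPoints} and exploit the stronger hypothesis $u \in SBD^p(\Omega)$ so as to obtain both the $SBD^p$-regularity of the approximants and the two additional asymptotic identities \eqref{eqs:blowupJumpPoints+}. The additional ingredient, available for $u \in SBD(\Omega)$, is the following refined blow-up at jump points: for $\hd$-a.e.\ $x_0\in J_u$, by \cite[Theorem~7.4]{ACD},
$$\lim_{\eps\to 0}\eps^{-d}\int_{B_\eps(x_0)}|u-\bar u^{\rm surf}_{x_0}|\,\dx=0,$$
together with the Lebesgue differentiation of the finite Radon measure $|[u]|\,\hd\lfloor_{J_u}$ at $x_0$, i.e.\
$$\lim_{\eps\to 0}\eps^{-(d-1)}\int_{J_u\cap B_\eps(x_0)}\big||[u]|-|[\bar u^{\rm surf}_{x_0}]|\big|\,\dh=0.$$

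I would then repeat the construction of Lemma~\ref{le:blowupJumpPoints}, refining the Fubini choice of $s_\eps\in(1-\theta,1-\theta/2)\eps$ so that, in addition to the properties used there, one also has
$$\eps^{-d}\int_{\partial B_{s_\eps}(x_0)}|u-\bar u^{\rm surf}_{x_0}|\,\dh\to 0,\qquad \eps^{-(d-1)}\int_{J_u\cap\partial B_{s_\eps}(x_0)}|[u]|\,\dh\to 0,$$
and then define $u_\eps$ by \eqref{1104202008}. Since $\hat v_\eps^\pm\in W^{1,p}(B_{s_\eps}(x_0);\R^d)$, while $u\in SBD^p$ outside $B_{s_\eps}(x_0)$, and the gluing across $\partial B_{s_\eps}(x_0)$ produces at most an $L^1$-summable jump (as $\hd(J_u\cap\partial B_{s_\eps}(x_0))=0$), one obtains $u_\eps\in SBD^p(B_\eps(x_0))$; all properties \eqref{eqs:blowupJumpPoints} are then inherited from Lemma~\ref{le:blowupJumpPoints}. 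Property \eqref{eqs:blowupJumpPoints+}(i) follows from $u_\eps=u$ outside $B_{s_\eps}(x_0)$ together with the bound $|u_\eps-u|\le|u_\eps-\bar u^{\rm surf}_{x_0}|+|u-\bar u^{\rm surf}_{x_0}|$ on $B_{s_\eps}(x_0)$: the first summand, normalized by $\eps^{-d}$, vanishes by \eqref{eqs:blowupJumpPoints}(ii) and H\"older's inequality, while the second vanishes by the $SBD$-blow-up recalled above.

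For \eqref{eqs:blowupJumpPoints+}(ii), I would split $J_{u_\eps}\cap E_{\eps,x_0}$ into the asymptotically dominant part contained in $\Pi_0\cap\overline{B_{s_\eps}(x_0)}\cap E_{\eps,x_0}$, the annular contribution from $J_u\cap (B_\eps(x_0)\setminus B_{s_\eps}(x_0))$, and the perimeter residuals coming from $\partial^*\omega_\eps^\pm$ and $(B_\eps^{\Gamma,\pm}(x_0)\triangle B_\eps^\pm(x_0))\cap \partial B_{s_\eps}(x_0)$; the latter three contribute $o(\eps^{d-1})$ after rescaling, thanks to the refined choice of $s_\eps$, \eqref{1104201859}(i), \eqref{1304200828-new}, and the perimeter estimate \eqref{1104201900}(ii). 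On $\Pi_0\cap\overline{B_{s_\eps}(x_0)}$ the jump opening of $u_\eps$ equals $|\hat v_\eps^+-\hat v_\eps^-|$; combining the $L^p$-smallness of $\hat v_\eps^\pm - a_\eps^\pm$ from \eqref{eq: 1104201900-new} (transferred to $\Pi_0\cap B_{s_\eps}(x_0)$ through a scaling-invariant trace inequality) with the $L^\infty$-closeness \eqref{1304201009} of $a_\eps^\pm$ to $u^\pm(x_0)$, one gets that this opening, integrated over $\Pi_0\cap E_{\eps,x_0}$, behaves like $|[\bar u^{\rm surf}_{x_0}]|\cdot \hd(\Pi_0\cap E_{\eps,x_0})$ up to lower order terms; together with \eqref{eqs:blowupJumpPoints}(iii) this yields the required identity. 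The main obstacle is precisely this last step, i.e.\ pinning down the asymptotic behavior of the traces of $\hat v_\eps^\pm$ on $\Pi_0$ in a sense strong enough to pass inside the jump integral, which is handled by combining Korn's inequality for $\hat v_\eps^\pm - a_\eps^\pm$ on $B_{s_\eps}^\pm(x_0)$ with the $L^\infty$-control of the rigid motions obtained in the proof of Lemma~\ref{le:blowupJumpPoints}.
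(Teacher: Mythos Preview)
Your plan coincides with the paper's: exploit the $SBD$ blow-up $\eps^{-d}\int_{B_\eps}|u-\bar u^{\rm surf}_{x_0}|\,\dx\to 0$, refine the Fubini choice of $s_\eps$ so that also $\eps^{-(d-1)}\int_{\partial B_{s_\eps}}|u-\bar u^{\rm surf}_{x_0}|\,\dh\to 0$ (the correct exponent is $-(d-1)$, not the $-d$ you wrote), keep the construction \eqref{1104202008}, and derive the two extra properties. Part~(i) is argued exactly as you say, and this is also how the paper proceeds.

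Your outline for~(ii), however, has two related gaps. First, the annular integral $\int_{J_u\cap E_{\eps,x_0}\cap(B_\eps\setminus B_{s_\eps})}|[u]|\,\dh$ is \emph{not} $o(\eps^{d-1})$ in general: since $s_\eps/\eps\le 1-\theta/2$ stays bounded away from $1$, for any $E$ meeting $B_1(x_0)\setminus B_{1-\theta/2}(x_0)$ this term is genuinely of order $\eps^{d-1}$ and must be matched with $|[\bar u^{\rm surf}_{x_0}]|\,\hd\big(\Pi_0\cap (E\setminus B_{s_\eps/\eps}(x_0))\big)$, precisely via the Lebesgue-point property of $|[u]|\,\hd\lfloor_{J_u}$ that you recalled at the outset but then did not invoke. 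Second, on the gluing sphere $\partial B_{s_\eps}$ (and on the other residual sets you list), knowing that their $\hd$-measure is $o(\eps^{d-1})$ does not control $\int|[u_\eps]|\,\dh$ over them, since the jump amplitude is a priori unbounded there. The paper resolves both points simultaneously: it upgrades the trace control from $\Pi_0$ to the \emph{whole} boundary $\partial B_{s_\eps}^\pm(x_0)$, obtaining $\eps^{-(d-1)}\int_{\partial B_{s_\eps}^\pm}|u_\eps-u^\pm(x_0)|\,\dh\to 0$ (by rescaling, Korn, and the $W^{1,p}$-trace inequality on the half-balls), which combined with the Fubini choice gives directly $\eps^{-(d-1)}\int_{\partial B_{s_\eps}}|u_\eps^- - u^+|\,\dh\to 0$; it then splits $E$ into $E^\eps:=E\cap B_{s_\eps/\eps}(x_0)$ and $E\setminus E^\eps$ and verifies that the \emph{difference} $\eps^{-(d-1)}\int_{J_{u_\eps}\cap(\,\cdot\,)_{\eps,x_0}}|[u_\eps]|-|[\bar u^{\rm surf}_{x_0}]|\,\hd(\Pi_0\cap\,\cdot\,)$ tends to zero on each piece, the outer one reducing to $u$ because $u_\eps=u$ in the annulus.
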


\begin{proof}
 Since  $u \in SBD(\Omega) $,  for  $\hd$-a.e.\ $x_0 \in J_u$ it holds that 
\begin{equation}\label{eq:migliorblowup}
\lim_{\eps \to 0} \eps^{-d} \int_{B_{\eps}(x_0)} \big|u- \bar{u}^{\rm surf}_{x_0}\big| \, \mathrm{d}x = 0\,.
\end{equation}
Hence, by Fubini's Theorem we find  $s_\eps \in (1-\theta, 1-\frac\theta2)\eps$  such that \eqref{1304200828-new} holds, we have   $\hd(J_u \cap \partial B_{s_\eps} (x_0)  )=0$, and
\begin{equation}\label{eq:buonincollamento}
\lim_{\eps \to 0}\eps^{-(d-1)} \int_{\partial B_{s_\eps}(x_0)} \big|u- \bar{u}^{\rm surf}_{x_0}\big| \, \mathrm{d}\hd = 0\,.
\end{equation}
We can now perform the same construction as in \eqref{1104202008}  and  derive \eqref{eqs:blowupJumpPoints}.  In particular,  this  in combination  with  H\"older's inequality,  $p>1$,   \eqref{eq:migliorblowup},  and $u=u_\eps$ in $B_\eps(x_0)\setminus B_{s_\eps}(x_0)$  yields \eqref{eqs:blowupJumpPoints+}(i).

  To see  \eqref{eqs:blowupJumpPoints+}(ii),  observe that, since $s_\eps/\eps$ is bounded from above and from below,  \eqref{eqs:blowupJumpPoints}(ii),(iv),  $p>1$, the fact that   $u_\eps\lfloor_{B^{\pm}_{s_\eps}(x_0)} \in W^{1,p}(B^{\pm}_{s_\eps}(x_0);\R^d)$  (see \eqref{1104202008}), and Korn's inequality imply   that   
$$ 
\int_{ s_\eps^{-1} ( B^{\pm}_{s_\eps}(x_0) - x_0)} |u_\eps(x_0+s_\eps y )-\bar{u}^{\rm surf}_{x_0}|^p \, {\rm d}y + \int_{ s_\eps^{-1} ( B^{\pm}_{s_\eps}(x_0) - x_0)} |\nabla_y  u_\eps(x_0+s_\eps y )|^p \, {\rm d}y  \to 0\,.
$$ 
 Hence, by the trace inequality and  by scaling back to  $B^\pm_{s_\eps}(x_0)$,  we obtain
\begin{equation*}
\lim_{\eps\to 0}{s_\eps}^{-(d-1)}\int_{ \partial B^{\pm}_{s_\eps}(x_0)}\left|u_\eps- u^\pm(x_0)  \right|\, \mathrm{d}\hd = 0\,.
\end{equation*}
Since $s_\eps/\eps$ is bounded from below, we then get that
\begin{equation}\label{eq:traceestimate}
\lim_{\eps\to 0}\eps^{-(d-1)}\int_{ \partial B^{\pm}_{s_\eps}(x_0)}\left|u_\eps- u^\pm(x_0) \right|\, \mathrm{d}\hd = 0\,.
\end{equation}
  Given a  Borel  set   $E \subset B_1(x_0)$, we 
   define  $E^\eps= E\cap B_{\frac{s_\eps}\eps}(x_0)$ for every $\eps>0$.   Then by  \eqref{1104202008} and \eqref{eq:traceestimate}   we get 
\begin{equation}\label{eq:part1}
\lim_{\eps \to 0} \Big(   \eps^{-(d-1)}\, \int_{J_{u_\eps}\cap    E^\eps_{\eps,x_0}  } |[u_\eps]|\,\mathrm{d}\mathcal{H}^{d-1} -  |[ \bar{u}^{\rm surf}_{x_0}]|\,\hd(\Pi_0 \cap E^\eps) \Big) = 0\,. 
\end{equation}
By \eqref{eq:buonincollamento} and \eqref{eq:traceestimate}   we also have
\[
\lim_{\eps \to 0} \   \eps^{-(d-1)}\int_{\partial B_{s_\eps}(x_0)} \big|u^-_\eps- u^+\big| \, \mathrm{d}\hd = 0,
\]
 where $u^-_\eps$ and $u^+$ indicate the inner and outer traces at $\partial B_{s_\eps}(x_0)$, respectively.  Hence, by construction of $u_\eps$ in \eqref{1104202008} and   since  $u\in SBD(\Omega)$, we obtain
\[
\begin{split}
&\lim_{\eps \to 0} \Big(   \eps^{-(d-1)}\,\int_{J_{u_\eps}\cap  (E\setminus  E^\eps  )_{\eps,x_0} }|[u_\eps]|\,\mathrm{d}\mathcal{H}^{d-1}  - |[ \bar{u}^{\rm surf}_{x_0}]|\,\hd(\Pi_0 \cap( E\setminus E^\eps)) \Big)  \\
&= \lim_{\eps \to 0} \Big(   \eps^{-(d-1)}\,\int_{J_{u}\cap  (E\setminus E^\eps)_{\eps,x_0} }|[u]|\,\mathrm{d}\mathcal{H}^{d-1}   - |[ \bar{u}^{\rm surf}_{x_0}]|\,\hd(\Pi_0 \cap( E\setminus E^\eps)) \Big)  = 0\,.
\end{split}
\]
Combining with \eqref{eq:part1}, this concludes the proof  of \eqref{eqs:blowupJumpPoints+}(ii). 
\end{proof}

 With this lemma at hand, we can address the equivalence of minimization problems for the surface scaling. 

\begin{lemma}\label{lemma: minsame2sbd}
Suppose that $\mathcal{F}$ satisfies {\rm (${\rm H_1}$)}, {\rm (${\rm H_3}$)},  and  {\rm (${\rm H^\prime_4}$)}. Let   $u \in SBD^p(\Omega)$. Then for $\mathcal{H}^{d-1}$-a.e.\ $x_0 \in J_u$  we have
\begin{align}\label{eq: PR-proof1_NNN}
  \lim_{\eps \to 0}\frac{\mathbf{m}_{\mathcal{F}}(u,B_\eps(x_0))}{\gamma_{d-1}\eps^{d-1}} =  \limsup_{\eps \to 0}\frac{\mathbf{m}_{\mathcal{F}}(\bar{u}^{\rm surf}_{x_0},B_\eps(x_0))}{\gamma_{d-1}\eps^{d-1}}. 
 \end{align}
\end{lemma}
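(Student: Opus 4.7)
The plan is to follow the two-step structure of the proof of Lemma \ref{lemma: minsame2}, replacing the approximating family of Lemma \ref{le:blowupJumpPoints} by one which additionally satisfies \eqref{eqs:blowupJumpPoints+}, and incorporating at each step the extra jump-opening contribution appearing in {\rm (H$_4^\prime$)}. As before, I fix $x_0 \in J_u$ at which the blow-up lemma holds, at which $\lim_{\eps\to 0}\eps^{-(d-1)}\mu(B_\eps(x_0))=\gamma_{d-1}$, and such that $\lim_{\eps\to 0}\eps^{-(d-1)}\mathbf{m}_{\mathcal{F}}(u,B_\eps(x_0))$ exists (by Lemma \ref{lemma: G=msbd}).

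For the inequality ``$\le$'', I select a competitor $z_\eps\in SBD^p(B_{(1-3\theta)\eps}(x_0))$ that nearly realizes $\mathbf{m}_{\mathcal{F}}(\bar{u}^{\rm surf}_{x_0},B_{(1-3\theta)\eps}(x_0))$, extend it by $\bar{u}^{\rm surf}_{x_0}$ to $B_\eps(x_0)$, and glue it to $u_\eps$ via Lemma \ref{lemma: fundamental estimate} on the same sets $A', A, \B$ as in \eqref{eq: definition of sets}. The $L^p$ term $\eps^{-p}\Vert z_\eps - u_\eps\Vert_{L^p((A\setminus A')_{\eps,x_0})}^p$ is absorbed in a vanishing remainder thanks to \eqref{eqs:blowupJumpPoints}(ii) exactly as in Step~1 of the proof of Lemma \ref{lemma: minsame2}. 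The key new point is that in estimating $\mathcal{F}(u_\eps,\B_{\eps,x_0})$ via (H$_4^\prime$) one now also encounters the term $\int_{J_{u_\eps}\cap \B_{\eps,x_0}}|[u_\eps]|\,{\rm d}\mathcal{H}^{d-1}$; this is controlled by \eqref{eqs:blowupJumpPoints+}(ii), which gives $\lim_{\eps\to 0}\eps^{-(d-1)}\int_{J_{u_\eps}}|[u_\eps]|\,{\rm d}\mathcal{H}^{d-1}=|[\bar{u}^{\rm surf}_{x_0}]|\gamma_{d-1}$ and hence the analogue of \eqref{eq: rep1XXX'} with an extra summand of order $1-(1-4\theta)^{d-1}$. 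Passing to $\eta,\theta\to 0$ then yields the desired inequality.

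For the inequality ``$\ge$'', I pick $s_\eps\in((1-4\theta)\eps,(1-3\theta)\eps)$ via Fubini so that, in addition to \eqref{1404201943}, one has
\begin{equation*}
\lim_{\eps\to 0}\eps^{-(d-1)}\int_{\partial B_{s_\eps}(x_0)}|u^+ - u_\eps^-|\,{\rm d}\mathcal{H}^{d-1}=0,
\end{equation*}
where $u_\eps^-, u^+$ denote the inner and outer traces at $\partial B_{s_\eps}(x_0)$; this is possible because \eqref{eqs:blowupJumpPoints+}(i) gives the required integral bound and Fubini selects a good radius. I then pick a near-optimal $z_\eps$ for $\mathbf{m}_{\mathcal{F}}(u,B_{s_\eps}(x_0))$, extend it by $u_\eps$ outside $B_{s_\eps}(x_0)$, and glue it to $\bar{u}^{\rm surf}_{x_0}$ via Lemma \ref{lemma: fundamental estimate}. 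In the analogue of \eqref{eq: rep1-new'}, the bound from (H$_1$) and (H$_4^\prime$) across the artificial jump on $\partial B_{s_\eps}(x_0)$ gains the additional term $\beta\int_{\partial B_{s_\eps}(x_0)}|u^+ - u_\eps^-|\,{\rm d}\mathcal{H}^{d-1}$, and this is exactly the quantity the above selection of $s_\eps$ forces to vanish at order $o(\eps^{d-1})$. The remaining estimates then proceed verbatim as in Step~2 of Lemma \ref{lemma: minsame2}.

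The main obstacle is the extra jump-opening term introduced by (H$_4^\prime$). Concretely, one must ensure that the gluing across $\partial B_{s_\eps}(x_0)$ (in the ``$\ge$'' direction) and across the annulus $\B_{\eps,x_0}$ (in the ``$\le$'' direction) does not produce an unbounded contribution of the form $\int |[u_\eps]|\,{\rm d}\mathcal{H}^{d-1}$ of the wrong order in $\eps$. This is precisely what \eqref{eqs:blowupJumpPoints+}(i)--(ii) and the Fubini-type selection of $s_\eps$ are designed to handle; once these are in place, the rest of the argument is a routine adaptation of the $GSBD^p$ proof.
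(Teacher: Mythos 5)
Your proposal is correct and takes essentially the same two-step route as the paper: you use the enhanced family of Lemma~7.8 satisfying both \eqref{eqs:blowupJumpPoints} and \eqref{eqs:blowupJumpPoints+}, control the extra jump-opening contribution to $\mathcal{F}(u_\eps,\B_{\eps,x_0})$ via \eqref{eqs:blowupJumpPoints+}(ii) for the ``$\le$'' direction, and control the artificial jump opening on $\partial B_{s_\eps}(x_0)$ via a Fubini selection justified by \eqref{eqs:blowupJumpPoints+}(i) for the ``$\ge$'' direction. The only detail glossed over is that $\mathcal{F}(\bar{u}^{\rm surf}_{x_0},\B_{\eps,x_0})$ in \eqref{eq: rep1'} and its analogue \eqref{1404202104} also acquire the factor $(1+|[\bar{u}^{\rm surf}_{x_0}]|)$ from (H$_4'$), but since those terms are of order $1-(1-4\theta)^{d-1}$ they vanish as $\theta\to 0$ just as in your estimate, so this is a harmless omission.
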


\begin{proof}
We argue as in the proof of  Lemma~\ref{lemma: minsame2}. 

For the "$\le$" inequality, take $u_\eps$ satisfying  \eqref{eqs:blowupJumpPoints} and \eqref{eqs:blowupJumpPoints+}, and perform the same construction as in  Lemma~\ref{lemma: minsame2}.   The estimates  \eqref{eq:repr1+++'} and \eqref{eq: rep1'} continue to hold, provided one replaces $\beta$ with the larger constant $\beta(1+|[\bar{u}^{\rm surf}_{x_0}]|)$. Then,  with  {\rm (${\rm H^\prime_4}$)}, \eqref{eqs:blowupJumpPoints}(iii),(iv), and \eqref{eqs:blowupJumpPoints+}(ii) we get
\begin{align}\label{eq: rep1XXX'new}
\limsup_{\eps \to 0} \frac{\mathcal{F}(u_\eps,  \B_{\eps,x_0})}{  \eps^{d-1} }  \le \beta (1+|[\bar{u}^{\rm surf}_{x_0}]|)\gamma_{d-1}\big( 1-(1-4\theta)^{d-1}  \big),
\end{align}
which is  the analogue of \eqref{eq: rep1XXX'}. This is  enough to derive inequality ``$\le$''    in \eqref{eq: PR-proof1_NNN}.    

For the reverse one, take again $u_\eps$ satisfying  \eqref{eqs:blowupJumpPoints} and \eqref{eqs:blowupJumpPoints+}. Then,  by \eqref{eqs:blowupJumpPoints+}(i),  in addition to \eqref{1404201943} we may also require that
\begin{align}\label{1404201943new}
\lim_{\eps \to 0}\varepsilon^{-(d-1)}\int_{\partial B_{s_\eps}(x_0)} \left|u^+-u^-_\eps\right|  \,\mathrm{d}\hd= 0,
\end{align} 
 where $u^-_\eps$ and $u^+$ indicate the inner and outer traces at $\partial B_{s_\eps}(x_0)$, respectively. 
We perform the same construction as in  Lemma~\ref{lemma: minsame2}.  In this case, inequality \eqref{eq: rep1-new'} is replaced  by
\begin{equation*}
\begin{split}
\mathcal{F}(z_\eps,A_{\eps,x_0})
&\leq \mathbf{m}_{\mathcal{F}}(u, B_{s_\eps}(x_0)) + \eps^{d} + \beta \, \hd\big((\{u\neq u_{\eps}\} \cup J_u \cup J_{u_{\eps}}) \cap \partial B_{s_\eps}(x_0)\big)  \\
& \ \ \ +\beta \int_{\partial B_{s_\eps}(x_0)} \left|u^+-u^-_\eps\right| \,\mathrm{d}\hd+ \mathcal{F}(u_{\eps}, \B_{\eps,x_0}),
\end{split}
\end{equation*}
so that, using \eqref{1404201943}, \eqref{eq: rep1XXX'new},   \eqref{1404201943new}, and the fact that $s_\eps \le (1-3\theta)\eps$  we deduce that
\begin{align*}
\limsup_{\eps \to 0} \frac{\mathcal{F}(z_\eps, A_{\eps,x_0})}{\eps^{d-1}}
\leq (1-3 \theta)^{d-1} \limsup_{\eps \to 0} \frac{\mathbf{m}_{\mathcal{F}}(u, B_{ \eps}(x_0))}{\eps^{d-1}} + \beta (1+|[\bar{u}^{\rm surf}_{x_0}]|)\, \gamma_{d-1} \big( 1-(1-4\theta)^{d-1} \big)\,.
\end{align*}
Then one concludes exactly in the same way, upon replacing $\beta$ in \eqref{1404202104} with the larger constant  $\beta (1+|[\bar{u}^{\rm surf}_{x_0}]|)$.
\end{proof}

\begin{proof}[Proof of Theorem \ref{theorem: SBD-representation}]
The result follows from Lemmas \ref{lemma: G=msbd}, \ref{lemma: minsamesbd}, and \ref{lemma: minsame2sbd}, arguing exactly as in the proof of Theorem \ref{theorem: PR-representation}.
\end{proof}

\section{The $GSBV^p$ case}\label{sec: appendix}

In this section we briefly remark that the strategy devised in this paper allows also to establish an integral representation result in the  space $GSBV^p(\Omega;\R^m)$ for $m \in \N$. We consider functionals  $\mathcal{F}\colon GSBV^p(\Omega;\R^m) \times \mathcal{B}(\Omega) \to  [0,+\infty)$ with the following general assumptions: 
\begin{itemize}
\item[($\mathbb{H}_1$)]  $\mathcal{F}(u,\cdot)$ is a Borel measure for any $u \in  GSBV^p(\Omega;\R^m)$, 
\item[($\mathbb{H}_2$)]  $\mathcal{F}(\cdot,A)$ is lower semicontinuous with respect to convergence in measure on $\Omega$ for any $A \in \mathcal{A}(\Omega)$,
\item[($\mathbb{H}_3$)]   $\mathcal{F}(\cdot, A)$ is local for any $A \in \mathcal{A}(\Omega)$, in the sense that if $u,v \in GSBV^p(\Omega;\R^m)$ satisfy $u=v$ a.e.\ in $A$, then $\mathcal{F}(u,A) = \mathcal{F}(v,A)$,
\item[($\mathbb{H}_4$)]  there exist $0 < \alpha  < \beta $ such that for any $u \in GSBV^p(\Omega;\R^m)$ and $B \in \mathcal{B}(\Omega)$  we have
$$\alpha \bigg(\int_{ B  } |\nabla u|^p  \dx    +   \mathcal{H}^{d-1}(J_u \cap B)\bigg) \le \mathcal{F}(u,B) \le \beta \bigg(\int_{ B } (1 + |\nabla u|^p)    \dx  +   \mathcal{H}^{d-1}(J_u \cap B)\bigg).$$ 
\end{itemize}

In this setting,    we replace  definition \eqref{eq: general minimization} by
\begin{equation}\label{eq: general minimizationsgbv}
\mathbf{m}_{\mathcal{F}}(u,A) = \inf_{v \in GSBV^p(\Omega;\R^m)} \  \lbrace \mathcal{F}(v,A): \ v = u \ \text{ in a neighborhood of } \partial A \rbrace\,.
\end{equation}
Moreover, as in  \eqref{eq: elastic competitor}--\eqref{eq: jump competitor}, we define the functions  $\ell_{x_0,u_0,\xi}(x) =  u_0 + \xi (x-x_0)$ and $u_{x_0,a,b,\nu}(x) = a$ on $\lbrace  (x-x_0) \cdot \nu > 0\rbrace$ and $u_{x_0,a,b,\nu}(x) = b$ on $\lbrace  (x-x_0) \cdot \nu < 0\rbrace$ for  $x_0 \in \Omega$, $u_0 \in \R^m$, $\xi \in \mathbb{M}^{m \times d}$,   $a,b \in \R^m$, and  $\nu \in \mathbb{S}^{d-1}$.

\begin{theorem}[Integral representation in $GSBV^p$]\label{theorem: PR-representation-gsbv}
Let $\Omega \subset \R^d$ be open, bounded with Lipschitz boundary, let $m \in \N$,  and suppose that  $\mathcal{F}\colon GSBV^p(\Omega;\R^m)  \times \mathcal{B}(\Omega) \to [0,+\infty)$ satisfies {\rm ($\mathbb{H}_1$)}--{\rm ($\mathbb{H}_4$)}. Then 
$$\mathcal{F}(u,B) = \int_B f\big(x,u(x),\nabla u(x)\big)  \, {\rm d}x +    \int_{J_u\cap  B} g\big(x,u^+(x),u^-(x),\nu_u(x)\big)\,  {\rm d}  \mathcal{H}^{d-1}(x)$$
for all $u \in  GSBV^p(\Omega;\R^m)$  and   $B \in \mathcal{B}(\Omega)$, where $f$ is given  by
\begin{align}\label{eq:fdef-gsbv}
f(x_0,u_0,\xi) = \limsup_{\eps \to 0} \frac{\mathbf{m}_{\mathcal{F}}(\ell_{x_0,u_0,\xi},B_\eps(x_0))}{\gamma_d\eps^{d}}
\end{align}
for all $x_0 \in \Omega$, $u_0 \in \R^m$, $\xi \in \mathbb{M}^{m \times d}$,  and $g$ is given by 
\begin{align}\label{eq:gdef-gsbv}
g(x_0,a,b,\nu) = \limsup_{\eps \to 0} \frac{\mathbf{m}_{\mathcal{F}}(u_{x_0,a,b,\nu},B_\eps(x_0))}{\gamma_{d-1}\eps^{d-1}}
\end{align}
for all $ x_0  \in \Omega$,  $a,b \in \R^m$, and $\nu \in \mathbb{S}^{d-1}$.
\end{theorem}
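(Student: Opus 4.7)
The proof will follow the same three-step strategy developed in Sections~\ref{sec: global method}--\ref{sec:surf} for the $GSBD^p$ case. By the Besicovitch derivation theorem and the upper bound in $(\mathbb{H}_4)$, it suffices to show that for $\Ld$-a.e.\ $x_0 \in \Omega$ one has $\frac{\mathrm{d}\mathcal{F}(u,\cdot)}{\mathrm{d}\Ld}(x_0) = f(x_0,u(x_0),\nabla u(x_0))$ and for $\hd$-a.e.\ $x_0 \in J_u$ one has $\frac{\mathrm{d}\mathcal{F}(u,\cdot)}{\mathrm{d}\hd\lfloor_{J_u}}(x_0) = g(x_0,u^+(x_0),u^-(x_0),\nu_u(x_0))$, with $f$, $g$ defined in \eqref{eq:fdef-gsbv}--\eqref{eq:gdef-gsbv}. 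This in turn reduces to proving the $GSBV^p$ analogs of Lemma~\ref{lemma: G=m} (equivalence of $\mathcal{F}$ and $\mathbf{m}_{\mathcal{F}}$ in the sense of Radon--Nikodym derivatives with respect to $\mu := \Ld\lfloor_{\Omega} + \hd\lfloor_{J_u \cap \Omega}$) and of Lemmas~\ref{lemma: minsame} and \ref{lemma: minsame2} (identification of the asymptotic minimum problems with the affine/two-valued competitors).

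The key technical ingredient replacing the Korn-type inequality of Theorem~\ref{th: kornSBDsmall} is the classical Poincar\'e-type inequality for $SBV$ functions with small jump set due to De Giorgi--Carriero--Leaci~\cite{DeGCarLea}, applied component-wise: for every $u \in GSBV^p(\Omega;\R^m)$ there exist a set of finite perimeter $\omega \subset \Omega$ with $\hd(\partial^*\omega) \le c\,\hd(J_u)$ and $\Ld(\omega) \le c(\hd(J_u))^{d/(d-1)}$, a function $v \in W^{1,p}(\Omega;\R^m)$ with $v = u$ on $\Omega \setminus \omega$, and a constant vector $c_u \in \R^m$ such that $\|u - c_u\|_{L^p(\Omega \setminus \omega)} + \|\nabla u\|_{L^p(\Omega \setminus \omega)} \le c \|\nabla u\|_{L^p(\Omega)}$. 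With this replacement, the analog of Corollary~\ref{cor: kornSBDsmall} (for functions with trace zero on $\partial\Omega$, yielding $c_u = 0$) follows by the same extension-and-scaling argument, and the fundamental estimate of Lemma~\ref{lemma: fundamental estimate} has a direct counterpart in $GSBV^p$ with essentially the same proof (the symmetrized product $\nabla \varphi_i \odot (u-v)$ is simply replaced by $\nabla \varphi_i \otimes (u-v)$).

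With these tools, the analog of Lemma~\ref{lemma: G=m-2} proceeds identically to Section~\ref{sec: global method}: given cell minimizers $v_i^\delta$ matching $u$ on $\partial B_i^\delta$, the Poincar\'e inequality with trace condition bounds $\|u - v_i^\delta\|_{L^p(B_i^\delta \setminus \omega_i^\delta)}$ in terms of $\delta\|\nabla u\|_{L^p(B_i^\delta)} + \delta\|\nabla v^\delta\|_{L^p(B_i^\delta)}$, and the same covering/exceptional-set computation shows $v^\delta \to u$ in measure as $\delta \to 0$. The blow-up lemmas at Lebesgue and jump points, analogous to Lemma~\ref{lemma: blow up} and Lemma~\ref{le:blowupJumpPoints}, are obtained by applying the Poincar\'e inequality to $u - \ell_{x_0,u(x_0),\nabla u(x_0)}$ on $B_{(1-\theta)\eps}(x_0)$ in the bulk case, and to $u$ separately on the two half-ball components $B_{s_\eps}^{\Gamma,\pm}(x_0)$ in the jump case. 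The identifications \eqref{eq: PR-proof1-bulk} and \eqref{eq: PR-proof1} then follow verbatim from Steps~1 and 2 of the proofs of Lemmas~\ref{lemma: minsame} and \ref{lemma: minsame2}, applying the fundamental estimate and the relevant blow-up properties.

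The main simplification relative to the $GSBD^p$ case occurs in the surface blow-up. In Lemma~\ref{le:blowupJumpPoints}, the delicate dyadic iteration \eqref{1304201701}--\eqref{1304201702} is necessary to control both the matrix part $A_\eps^\pm$ and the translational part $b_\eps^\pm$ of the infinitesimal rigid motion, because Korn's inequality on a shrinking half-ball introduces affine correctors that can \emph{a priori} concentrate. In the present $GSBV^p$ setting, the Poincar\'e inequality produces only constants $c_\eps^\pm \in \R^m$, and the convergence $c_\eps^\pm \to u^\pm(x_0)$ follows at once from the blow-up property $\lim_{\eps \to 0}\eps^{-d}\Ld(\{|u - u^\pm(x_0)| > \varrho\} \cap B_\eps^\pm(x_0)) = 0$ combined with Lemma~\ref{lemma: rigid motion}; no iteration is required, and the analog of \eqref{1304201020} is immediate. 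I do not foresee any essential obstacle beyond this; in particular, no perturbation or truncation arguments (customary in the reduction from $GSBV^p$ to $SBV^p$) enter the proof, yielding a direct and self-contained argument.
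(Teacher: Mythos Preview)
Your overall strategy matches the paper's proof of Theorem~\ref{theorem: PR-representation-gsbv} almost exactly: the paper too reruns Lemmas~\ref{lemma: G=m}, \ref{lemma: minsame}, and \ref{lemma: minsame2} with the full-gradient growth condition $(\mathbb{H}_4)$, replacing Theorem~\ref{th: kornSBDsmall} by the Poincar\'e-type inequality of Theorem~\ref{th: poincareGSBV} (which the paper derives from \cite{CCS} rather than citing \cite{DeGCarLea} directly, but the statement you write down is precisely Theorem~\ref{th: poincareGSBV}). The fundamental estimate, the covering argument in Lemma~\ref{lemma: G=m-2}, and the bulk blow-up all adapt just as you describe.

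There is, however, one genuine gap in your claimed simplification of the surface blow-up. You assert that since the Poincar\'e corrector is a constant $c_\eps^\pm$, mere convergence $c_\eps^\pm \to u^\pm(x_0)$ suffices and ``the analog of \eqref{1304201020} is immediate''. But \eqref{1304201020} reads $\eps^{-(d-1+p)}\int_{B^\pm_{s_\eps}(x_0)}|c_\eps^\pm-u^\pm(x_0)|^p\,\mathrm{d}x\to 0$, which for a constant amounts to the \emph{rate} $|c_\eps^\pm-u^\pm(x_0)|=o(\eps^{(p-1)/p})$; since $p>1$, bare convergence is not enough. The approximate-jump property \eqref{0106172148} and Lemma~\ref{lemma: rigid motion} only give $|c_\eps^\pm-u^\pm(x_0)|\le o(\eps^{(p-1)/p})+C\varrho_\eps$ (compare \eqref{1304200934}), where $\varrho_\eps$ comes from a diagonal argument and carries no rate. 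You therefore still need the dyadic telescoping \eqref{1304201701} to obtain the Cauchy estimate $|c^+_{\tilde\eps}-c^+_\eps|\le c\delta\eps^{(p-1)/p}$ and then pass to the limit as in the proof of \eqref{1304201145}. The genuine simplification is only that \eqref{1304201144} is vacuous (there is no matrix part), so \eqref{1304201702} disappears; the iteration for the translational part survives unchanged. The paper's proof sketch confirms this: it simply says to follow the proof of Lemma~\ref{le:blowupJumpPoints} with Theorem~\ref{th: poincareGSBV} in place of Theorem~\ref{th: kornSBDsmall}, without claiming the iteration can be dropped.
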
 


We point out that integral representation results in $GSBV^p$ have been used in several contributions, see e.g.\    \cite{BacBraZep18, BacCicRuf19, barfoc, BarLazZep16, Caterina, focgelpon07}.   
They all rely on \cite{BFLM} along with a perturbation and truncation argument as follows: first, one considers the regularization $\mathcal{F}_\sigma(u) := \mathcal{F}(u) + \sigma\int_{J_u} |[u]|\, {\rm d}\mathcal{H}^{d-1}$, restricted to $u \in SBV^p(\Omega;\R^m)$. Then, the assumptions of the integral representation result in $SBV^p$  \cite{BFLM} are satisfied and one obtains a representation of $\mathcal{F}_\sigma$. In a second step, this representation is extended to $GSBV^p$ by a truncation argument which allows to approximate $GSBV^p$ functions by $SBV^p$ functions. Eventually,  by sending $\sigma \to 0$, an integral representation result for the original functional can be obtained. We refer to \cite[Theorem 4.3, Theorem 5.1]{Caterina} for details on this procedure. (We notice that in \cite{Caterina} a more general growth condition from above is allowed in the surface energy density, cf.\ \cite[assumption (1.4)]{Caterina}, analogous to the  one in (H$_4^\prime$)). With our result at hand, this method can be considerably simplified since no perturbation and truncation arguments are needed.

For the proof we need the following  Poincar\'e-type inequality, which can be directly deduced from Theorem \ref{th: kornSBDsmall}.

\begin{theorem}[Poincar\'e inequality for functions with small jump set]\label{th: poincareGSBV}
Let $\Omega \subset \R^d$ be a bounded Lipschitz domain and let $1 < p < +\infty$. Then there exists a constant $c = c(\Omega,p,m)>0$ such that for all  $u \in GSBV^p(\Omega; \R^m)$ there is a set of finite perimeter $\omega \subset \Omega$ with 
\begin{align*}
\mathcal{H}^{d-1}(\partial^* \omega) \le c\mathcal{H}^{d-1}(J_u), \ \ \ \ \mathcal{L}^d(\omega) \le c(\mathcal{H}^{d-1}(J_u))^{d/(d-1)}
\end{align*}
and $v \in W^{1,p}(\Omega;\R^m)$ such that $v= u$ on $\Omega \setminus \omega$ and
\begin{align}\label{eq: main estmain-Sobolevp}
\Vert \nabla v \Vert_{L^p(\Omega)} \le c \Vert \nabla u \Vert_{L^p(\Omega)}.
\end{align}
In particular, for all  $u \in GSBV^p(\Omega; \R^m)$ there is a constant $b\in \R^m$ such that
\begin{align*}
\Vert u - b\Vert_{L^{p}(\Omega \setminus \omega)} \le c \Vert \nabla u \Vert_{L^p(\Omega)}.
\end{align*}
\end{theorem}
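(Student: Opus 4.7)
The plan is to reduce Theorem~\ref{th: poincareGSBV} to Theorem~\ref{th: kornSBDsmall} via a component-wise $\R^d$-valued embedding. Given $u=(u_1,\dots,u_m)\in GSBV^p(\Omega;\R^m)$, for each $i\in\{1,\dots,m\}$ I would set $\mathbf{u}_i(x):=u_i(x)\,e_1\in\R^d$, where $e_1$ is the first vector of the canonical basis of $\R^d$. Then $\mathbf{u}_i\in GSBD^p(\Omega)$ with $J_{\mathbf{u}_i}\subset J_u$, and, since $\nabla \mathbf{u}_i=e_1\otimes\nabla u_i$ has only the first row nonzero, a direct computation yields
\begin{equation*}
\tfrac{1}{\sqrt{2}}|\nabla u_i|\le |e(\mathbf{u}_i)|\le |\nabla u_i|\qquad\text{pointwise a.e.\ in }\Omega.
\end{equation*}
As in the proof of Corollary~\ref{cor: kornSBDsmall}, I would first reduce to the case in which $\mathcal{H}^{d-1}(J_u)$ is so small that the volume bound in \eqref{eq: R2main} forces $\mathcal{L}^d(\omega_i)\le\tfrac12\mathcal{L}^d(\Omega)$ for each $i$; otherwise the choice $\omega=\Omega$, $v\equiv 0$ makes the statement trivial.

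Under the smallness assumption, I would apply Theorem~\ref{th: kornSBDsmall} together with Remark~\ref{rem: Korn-scaling}(i) to each $\mathbf{u}_i$, producing a set of finite perimeter $\omega_i\subset\Omega$ satisfying \eqref{eq: R2main} with $J_u$ replaced by $J_{u_i}$, a function $\mathbf{v}_i\in W^{1,p}(\Omega;\R^d)$ with $\mathbf{v}_i=\mathbf{u}_i$ on $\Omega\setminus\omega_i$, and an infinitesimal rigid motion $a_i(x)=A_ix+b_i$ (with $A_i\in\Mddskew$) such that
\begin{equation*}
\|\nabla \mathbf{v}_i-A_i\|_{L^p(\Omega)}\le c\|e(\mathbf{u}_i)\|_{L^p(\Omega)}\le c\|\nabla u_i\|_{L^p(\Omega)}.
\end{equation*}

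The main step will be to control the constant matrix $A_i$ itself, which is what the $GSBD$-type estimate does not furnish directly. Since $\nabla \mathbf{v}_i=\nabla \mathbf{u}_i$ on $\Omega\setminus\omega_i$ and the $j$-th row of $\nabla \mathbf{u}_i$ vanishes for every $j\ge 2$, the inequality above applied row by row gives $|A_i[j,\cdot]|\,(\mathcal{L}^d(\Omega\setminus\omega_i))^{1/p}\le c\|\nabla u_i\|_{L^p(\Omega)}$ for each $j\ge 2$. Combined with the smallness hypothesis (so that $\mathcal{L}^d(\Omega\setminus\omega_i)\ge\tfrac12\mathcal{L}^d(\Omega)$) and the skew-symmetry of $A_i$ (which implies $(A_i)_{1k}=-(A_i)_{k1}$ for $k\ge 2$ and $(A_i)_{11}=0$), this transfers the bound to the first row as well, yielding $|A_i|\le c\|\nabla u_i\|_{L^p(\Omega)}$ and hence $\|\nabla \mathbf{v}_i\|_{L^p(\Omega)}\le c\|\nabla u_i\|_{L^p(\Omega)}$.

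To conclude, I would take $v_i:=(\mathbf{v}_i)_1\in W^{1,p}(\Omega)$, which coincides with $u_i$ on $\Omega\setminus\omega_i$ and satisfies $\|\nabla v_i\|_{L^p(\Omega)}\le\|\nabla \mathbf{v}_i\|_{L^p(\Omega)}\le c\|\nabla u_i\|_{L^p(\Omega)}$. Setting $\omega:=\bigcup_{i=1}^m\omega_i$ and $v:=(v_1,\dots,v_m)$, the perimeter and volume bounds for $\omega$ follow by subadditivity (using $J_{u_i}\subset J_u$ and the isoperimetric inequality for the volume bound), while \eqref{eq: main estmain-Sobolevp} comes from summing the component estimates in $\ell^p$; this introduces the $m$-dependence of the constant. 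The final ``in particular'' assertion then follows at once by applying the classical Poincar\'e inequality to $v$ in $W^{1,p}(\Omega;\R^m)$, choosing $b\in\R^m$ accordingly, and using that $v=u$ on $\Omega\setminus\omega$.
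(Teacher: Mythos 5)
Your proposal is correct and takes essentially the same route as the paper's proof, but you supply a genuine technical detail that the paper's one-sentence sketch omits. The paper simply writes: reduce to $m=1$, apply Theorem~\ref{th: kornSBDsmall} to $\bar u:=(u,0,\dots,0)$, and ``use the Sobolev--Korn inequality to get $\nabla v$ on the left-hand side.'' This leaves unaddressed the point that Korn's inequality in $W^{1,p}$ only yields $\Vert\nabla v - A\Vert_{L^p(\Omega)}\le c\Vert e(v)\Vert_{L^p(\Omega)}$ for some $A\in\Mddskew$, so the skew-symmetric matrix $A$ still has to be absorbed. Your observation — that rows $2,\dots,d$ of $\nabla \bar u$ vanish, hence $|A[j,\cdot]|$ for $j\ge 2$ is controlled on the large set $\Omega\setminus\omega$, and then skew-symmetry $(A)_{1k}=-(A)_{k1}$, $(A)_{11}=0$ bounds the remaining first row — is exactly the missing argument, and it is the natural way to complete the paper's sketch (one could also phrase it as an application of Lemma~\ref{lemma: rigid motion}). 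Apart from this, you treat the $m$ components separately and then take $\omega=\bigcup_i\omega_i$ and sum the component estimates, while the paper reduces to $m=1$ outright; this is a purely presentational difference. The pointwise comparison $\tfrac{1}{\sqrt 2}|\nabla u_i|\le |e(\mathbf{u}_i)|\le |\nabla u_i|$ that you derive is correct and is implicitly what makes the reduction from $GSBV^p$ to $GSBD^p$ effective. The smallness reduction mirrors the one in Corollary~\ref{cor: kornSBDsmall}, as you note, and the final ``in particular'' claim follows, as you say, from the classical Poincar\'e inequality for $v\in W^{1,p}(\Omega;\R^m)$.
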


\begin{proof} 
It suffices to consider the case $m=1$ and to prove \eqref{eq: main estmain-Sobolevp}. This can be obtained for instance by applying Theorem  \ref{th: kornSBDsmall} to the function $\bar u\colon\Omega \to \R^d$ defined as $\bar u:=(u,0, \dots, 0)$ and using the Sobolev-Korn inequality to get $\nabla v$ on the left-hand side.
\end{proof}

\begin{proof}[Proof of Theorem \ref{theorem: PR-representation-gsbv}]
We follow the proof of Theorem \ref{theorem: PR-representation} and only indicate briefly the necessary adaptions. First, we observe that a version of the fundamental estimate in Lemma \ref {lemma: fundamental estimate} holds true in $GSBV^p(\Omega;\R^m)$ by repeating the proof with ($\mathbb{H}_4$) in place of  ({H}$_4$). (We refer also to \cite[Proposition~3.1]{Braides-Defranceschi-Vitali}.)  Recall that the result follows by combining Lemmas \ref{lemma: G=m}, \ref{lemma: minsame}, and \ref{lemma: minsame2}.

\emph{Lemma \ref{lemma: G=m}:} The result is proved via \BBB \eqref{eq:referee} and Lemma \EEE \ref{lemma: G=m-2}. \BBB Notice indeed that the derivation of  \eqref{eq:referee}, as well as the argument ensuing therefrom \EEE are the same, up to using the growth condition ($\mathbb{H}_4$) instead of ({H}$_4$). (We  also refer  to \cite[Lemma 6]{BFLM} for the corresponding argument in $SBV^p$.)  In the proof of Lemma \ref{lemma: G=m-2}, due to the (stronger) lower bound in ($\mathbb{H}_4$) and Ambrosio's compactness theorem in $GSBV^p$ (see  \cite[Theorem~4.36]{Ambrosio-Fusco-Pallara:2000}),  one can ensure that the function $v^\delta$ defined in \eqref{eq: def of vdelta} now belongs to $GSBV^p(\Omega;\R^m)$. Then, the result follows with the same argument,  up to using Theorem \ref{th: poincareGSBV} in place of Theorem \ref{th: kornSBDsmall}.

\emph{Lemma \ref{lemma: minsame}:} With the fundamental estimate in $GSBV^p$ at hand, we can follow the proof of Lemma~\ref{lemma: minsame} for each $u \in GSBV^p(\Omega;\R^m)$  with  ($\mathbb{H}_4$) instead of ({H}$_4$). The  family  $(u_\eps)_\eps$  is defined as in Lemma \ref{lemma: blow up} (see \eqref{eq: ueps-def}) using Theorem \ref{th: poincareGSBV} in place of Theorem \ref{th: kornSBDsmall}.  
First, $u_\eps \in GSBV^p(B_\eps(x_0);\R^m)$ since $u \in GSBV^p(\Omega;\R^m)$, and $u_\eps\lfloor_{B_{(1-\theta)\eps}(x_0)} \in W^{1,p}(B_{(1-\theta)\eps}(x_0);\R^m)$. Observing that  
\begin{equation}\label{eq: blowupGSBV}
\lim_{\eps \to 0} \ \eps^{-d} \int_{B_{\eps}(x_0)} \big| \nabla u(x) - \nabla u(x_0)\big|^p \, \mathrm{d}x = 0
\end{equation}
for $\mathcal{L}^d$-a.e.\ $x_0 \in \Omega$ (as $\nabla u \in L^p(\Omega;  \mathbb{M}^{m\times d}  )$) and using \eqref{eq: main estmain-Sobolevp}, we further get
$$ \lim_{\eps \to 0} \ \eps^{-d} \int_{B_{(1-\theta)\eps}(x_0)} \big| \nabla u_\eps(x) - \nabla u(x_0)\big|^p \, \mathrm{d}x = 0. $$ 
With \eqref{eq: ueps-def}, and using again \eqref{eq: blowupGSBV}, we deduce that \eqref{eq: blow up-new}(iii) can be improved to   
\begin{align*}
\lim_{\eps \to 0} \ \eps^{-d} \int_{B_{\eps}(x_0)} \big| \nabla u_\eps(x) - \nabla u(x_0)\big|^p \, \mathrm{d}x = 0.
\end{align*}
 This adaption is enough to redo the proof of Lemma \ref{lemma: minsame} in the present situation.

\emph{Lemma \ref{lemma: minsame2}:} Here, we can follow the proof of Lemma \ref{lemma: minsame2} for each $u \in GSBV^p(\Omega;\R^m)$  with  ($\mathbb{H}_4$) instead of ({H}$_4$), and Theorem~\ref{th: poincareGSBV} in place of Theorem \ref{th: kornSBDsmall}.  The  family  $(u_\eps)_\eps$  defined in Lemma~\ref{le:blowupJumpPoints}  needs to satisfy $u_\eps \in GSBV^p(B_\eps(x_0);\R^m)$ and \eqref{eqs:blowupJumpPoints}(iv) needs to be improved to   
\begin{align}\label{eq: blow up-new-gsbv2} 
\lim_{\eps \to 0} \ \eps^{-(d-1)} \int_{B_{\eps}(x_0)} \big| \nabla u_\eps\big|^p \, \mathrm{d}x = 0.
\end{align}
First, we use \eqref{1104202008} to see that $u_\eps \in GSBV^p(B_\eps(x_0);\R^m)$. Arguing as in the proof of \eqref{eqs:blowupJumpPoints}(iv),  with Theorem \ref{th: poincareGSBV} at hand, we obtain
\begin{equation*}
\lim_{\eps\to 0}{\eps}^{-(d-1)}\int_{B^{\pm}_{s_\eps}(x_0)}| \nabla u_\eps|^p\,  \mathrm{d}x  = 0.
\end{equation*}
This along with $\lim_{\eps \to 0} \ \eps^{-(d-1)} \int_{B_{\eps}(x_0)} | \nabla u|^p \, \mathrm{d}x = 0$ for $\mathcal{H}^{d-1}$-a.e.\ $x_0 \in J_u$ and \eqref{1104202008} concludes the proof of \eqref{eq: blow up-new-gsbv2}.
\end{proof}

\section*{Acknowledgements} 
\noindent For this project VC has received funding from the European Union’s Horizon 2020 research and innovation programme under the Marie Skłodowska-Curie grant agreement No.\ 793018.
MF is supported by the DFG project FR 4083/1-1 and  by the Deutsche Forschungsgemeinschaft (DFG, German Research Foundation) under Germany's Excellence Strategy EXC 2044 -390685587, Mathematics M\"unster: Dynamics--Geometry--Structure. 
The work of FS is part of the project “Variational methods for stationary and evolution problems with singularities and interfaces” PRIN 2017 financed by the Italian Ministry of Education, University, and Research.

\bibliographystyle{siam}
\bibliography{biblioIntRep}

\end{document}